\newcommand{\bbA}{{\mathbb{A}}}
\newcommand{\bbC}{{\mathbb{C}}}
\newcommand{\bbZ}{{\mathbb{Z}}}
\newcommand{\supp}{{\mathrm{supp}}}
\newcommand{\wit}{\widetilde}
\newcommand{\trace}{{\mathbf{tr}}}
\numberwithin{equation}{section}
\newtheorem{Prop}[equation]{Proposition}
\newtheorem{Lem}[equation]{Lemma}
\newtheorem{Def}[equation]{Definition}
\newtheorem{Thm}[equation] {Theorem}
\newtheorem{Cor}[equation]{Corollary}
\newtheorem{Rem}[equation]{Remark}
\title
   [ Smooth Cuspidal Automorphic Forms]
   { Smooth Cuspidal Automorphic Forms and Integrable discrete Series}
\author{ Goran Mui\' c}
\address{ Department of Mathematics,
University of Zagreb,
Bijeni\v cka 30, 10000 Zagreb,
Croatia}
\email{gmuic@math.hr}
\subjclass{11E70, 22E50}
\keywords{}
\thanks{The  author acknowledges Croatian Science Foundation grant no. 9364.}
\begin{document}
\maketitle

\begin{abstract}
  In this paper we construct smooth cuspidal automorphic forms  related to integrable discrete series of a connected semisimple
  Lie group with finite center for classical and adelic situation as an application of the theory of Schwartz spaces for automorphic
  forms developed  by Casselman. In the classical situation, smooth cuspidal automorphic forms are constructed via an explicit
  continuous map from the Frech\' et space of  smooth vectors of a Banach realization inside $L^1(G)$ of an integrable discrete series
  into the space of smooth vectors of a strong topological dual of an appropriate Schwartz space. 
\end{abstract}

\section{Introduction}
The usual definition of an automorphic form includes the assumption that the function is $K$--finite on the Archimedean part.
Smooth automorphic forms are used extensively in the theory of
automorphic forms (see for example \cite{cogdell}, \cite{lapid}, \cite{ms1}, \cite{ms2}, \cite{ms3}).
Although  important, there is no explicit general construction of
smooth cuspidal forms which are not $K$--finite.

In our papers \cite{MuicMathAnn}, \cite{MuicJNT}, and \cite{MuicJFA} we have
studied the classical construction \cite{bb}
of $K$--finite cuspidal automorphic forms via Poincar\' e series from $K$--finite  matrix coefficients of integrable discrete
series. In this paper we complete these investigation by considering the  smooth case using  results of
Casselman \cite{casselman-1}.  We explain our results and content of the paper by sections. 

In this introduction, $G$ is a group of $\mathbb R$--points of a
semisimple algebraic group $\mathcal G$ defined over $\mathbb Q$. We assume that $G$ is not
compact and it is connected. Then, $G$ is a connected semisimple Lie group with finite center. In some sections of the paper
we just assume the latter. Important groups such as $Sp_{2n}(\mathbb R)$ or its double cover belong to this class.
In any case, we let $K$ be a choice of a maximal compact subgroup of $G$, and
 $\cal Z(\mathfrak g_{\mathbb C})$ be the center of the universal enveloping algebra of the complexified Lie algebra of $G$.

We let $\Gamma\subset G$ be a congruence subgroup with respect to the arithmetic structure
given by the fact that $\mathcal G$ defined over $\mathbb Q$ (see \cite{BJ}).
Then,  $\Gamma$ is a discrete subgroup of $G$ and it has a finite covolume.

In Section \ref{rmg}, we recall the notion of the norm on $G$ and state some properties of the norm.
This is essential to all other investigations in the paper.
 In Section \ref{paf}, we recall the
 definition of $K$--finite and smooth automorphic and cuspidal forms. We recall some basic properties of automorphic forms, and,
 in particular, the result that claims that 
a $\cal Z(\mathfrak g_{\mathbb C})$--finite and $K$--finite function in $L^p(\Gamma\backslash G)$ for some $p\ge 1$
is an automorphic form (see Lemma \ref{paf-1}). In Section \ref{src}, we give simple and natural proof of this result as an
application of results of Casselman \cite{casselman-1} (see Proposition \ref{src-6}). Besides, in Section \ref{src}, we prove Lemma
\ref{src-5} in which we prepare results of Casselman for application to the construction of smooth automorphic forms.
Aforementioned result about  automorphic forms is also a  consequence
of that lemma.

In Section \ref{ids},  we give a complete description 
of irreducible closed admissible subrepresentations of $L^1(G)$ under the right translations. It is quite likely that this is
well--known but we could not find a convenient reference. Such irreducible representations are integrable discrete series. A complete
description of them can be found in \cite{milicic}. The main results of Section \ref{ids} are contained in Lemma \ref{ids-2} and
Theorem \ref{ids-204}. In Lemma \ref{ids-0000} we prove that all smooth matrix coefficients of a discrete series representation belong to
$L^p(G)$, for some $p\in [1, 2[$, whenever there exists a non--zero $K$--finite matrix coefficient which satisfies the same. The proof is based
    on Casselman--Wallach theory of globalization of $(\mathfrak g, K)$---modules \cite{W2}, \cite{casselman}.  It is the key ingredient in the
    description of smooth vectors of realizations of integrable discrete series in $L^1(G)$
(see Lemma \ref{ids-2} (vii)).

In Section \ref{aaf}, after all aforementioned preparations, we prove the main result of the present paper
(see Theorem \ref{aaf-3}). There, we just assume that  $G$ is a connected semisimple Lie group with finite center and $\Gamma$ is
any discrete subgroup.  We assume that $G$ admits discrete series, and let $(\pi, \mathcal H)$ be an integrable discrete series
of $G$.  We fix a closed irreducible subrepresentation $\mathcal B_{h'}$ of $L^1(G)$ infinitesimally equivalent to $(\pi, \mathcal H)$
which is attached to a $K$--finite vector $h'\in \mathcal H$ via formation of certain matrix coefficients (see
Lemma \ref{ids-2}). In  Theorem \ref{aaf-3}, we look at the usual formation of Poincar\' e series $P_\Gamma$
attached to $\Gamma$ (see the first paragraph of Section \ref{aaf}), in two ways.

Firstly, we prove that
the map $\varphi \longmapsto P_\Gamma(\varphi)$ is a continuous  $G$--equivariant map from  the Banach representation
$\mathcal B_{h'}$ into the unitary representation  $L^2(\Gamma \backslash G)$. When $\Gamma$ is a congruence subgroup,
as an immediate consequence of asymptotic results of Wallach \cite{W0}, the image is contained in the cuspidal
subspace $L^2_{cusp}(\Gamma\backslash G)$. 

Secondly, we may consider $P_\Gamma$ as the continuous map $P_\Gamma: \ \mathcal B_{h'}\longrightarrow 
\mathcal S\left(\Gamma\backslash G\right)'$ where $\mathcal S\left(\Gamma\backslash G\right)'$ is the strong topological dual
of the Schwartz space $\mathcal S\left(\Gamma\backslash G\right)$. In \cite{casselman-1} (see also
Lemma \ref{src-4} in this paper),
it was proved that the Garding space of $\mathcal S\left(\Gamma\backslash G\right)'$ is the space of functions of uniform moderate
growth  $\cal A_{umg}(\Gamma\backslash G)$. In Theorem \ref{aaf-3} (ii), we prove that the space of smooth vectors
$\mathcal B_{h'}^\infty$ gets mapped into the subspace of  $\cal Z(\mathfrak g_{\mathbb C})$--finite vectors in
   $\cal A_{umg}(\Gamma\backslash G)$.  We remind the reader that when $\Gamma$ is a congruence subgroup, 
$\cal Z(\mathfrak g_{\mathbb C})$--finite vectors in $\cal A_{umg}(\Gamma\backslash G)$ are smooth automorphic forms by definition.
In this way, we achieve a construction of smooth cuspidal automorphic forms (see Theorem \ref{1aaf-4} for details). We remark that we have a
canonical isomorphism of Frech\' et representations $\mathcal B_{h'}^\infty\simeq \mathcal H^\infty$ (see Lemma \ref{ids-2} (vii)).

Thirdly, the map $P_\Gamma$ could be identically zero, but in Theorem \ref{aaf-3} (iv) we give a sufficient condition
that the map is not zero. It is based on our usual non--vanishing criterion (\cite{MuicMathAnn}, Theorem  4-1).

In Section \ref{1aaf}, we give applications of Theorem \ref{aaf-3}. In Theorem \ref{1aaf-4} we construct smooth cuspidal
automorphic forms, and we study when the map $P_\Gamma$ is not zero for principal congruence subgroups.  The corollary of these
investigations is  the result for smooth adelic cuspidal automorphic forms which we state and prove in Corollary \ref{1aaf-5}.

The first draft of the paper was written while the author visited the Hong Kong University of Science and Technology in May of 2016.
The author would like to thank A. Moy and the  Hong Kong University of Science and Technology for their hospitality.

\section{Norms on The Group}\label{rmg}

In this section we assume that  $G$ is a connected semisimple Lie group
with finite center, and  recall the notion of the norm on $G$.
It is essential for all what follows. 

\medskip

We fix a minimal parabolic subgroup $P=MAN$ of $G$ in
the usual way (see \cite{W1}, Section 2).
We have the Iwasawa decomposition $G=NAK$.

\medskip

We recall the notion of a  norm on the group following \cite{W1}, 2.A.2.
A norm $|| \ ||$ is a function $G\longrightarrow [1, \infty[$ satisfying
    the following properties:
    \begin{itemize}
    \item[(1)] $||x^{-1}||=||x||$, for all $x\in G$;
    \item[(2)]$||x\cdot y||\le ||x||\cdot || y||$, for all $x, y\in G$;
    \item[(3)] the sets $\left\{x\in G; \ \ ||x||\le r \right\}$ are compact  for all $r\ge 1$;
    \item[(4)] $||k_1\exp{(tX)} k_2||=||\exp{(X)}||^t$, for all $k_1, k_2\in K,
      X\in \mathfrak p, \ \ t\ge 0$.
    \end{itemize}
    Any two norms $||\ ||_i$, $i=1,2$, are equivalent: there exist $C, r>0$ such that
    $||x||_1\le C ||x||^r_2$, for all $x\in G$.

    Let $\Phi(\mathfrak g, \mathfrak a)$ be the set of all roots of $ \mathfrak a$ in
    $ \mathfrak g$. Let $\Phi^+(\mathfrak g, \mathfrak a)$ be the set of positive roots
    with respect to $\mathfrak n=Lie(N)$. Set

$$
\rho(H)= \frac12 \trace{\left(\text{ad}(H)|_{\mathfrak n}\right)}, \ \ H\in \mathfrak a.
$$
We set 
$$
m(\alpha)=\dim \ \mathfrak g_\alpha, \ \ \alpha\in \Phi^+(\mathfrak g, \mathfrak a).
$$
For $\mu\in \mathfrak a^\star$, we let 
$$
a^\mu = \exp{(\mu(H))}, \ \ a=\exp{(H)}.
$$
We define $A^+$ to be the set of all $a\in A$ such that $a^\alpha>1$ for all 
$\alpha\in \Phi^+(\mathfrak g, \mathfrak a)$.
Finally, we let
$$
D(a)=\prod_{\alpha\in \Phi^+(\mathfrak g, \mathfrak a)} \sinh{(\alpha(H))}^{m(\alpha)}, \ \  a=\exp{(H)}.
$$
Then, we may define a Haar measure on $G$ by the following formula:

$$
\int_G f(g) dg = \int_K \int_{A^+}\int_K D(a) f(k_1ak_2)dk_1 da dk_2, \ \  f\in C_c^\infty(G).
$$

Let $\left\{\alpha_1, \ldots, \alpha_r\right\}$ be the set of simple roots in
$ \Phi^+(\mathfrak g, \mathfrak a)$. Since $G$ is semisimple,
we have that this set spans $\mathfrak a^\star$. We define the dual basis
$\left\{H_1, \ldots, H_r\right\}$ of $\mathfrak a$ 
in the standard way: $\alpha_i(H_j)=\delta_{ij}$. By (\cite{W1}, Lemma 2.A.2.3),
there exists, $\mu, \eta\in  \mathfrak a^\star$
such that $\mu(H_i), \eta(H_j)>0$, for all $j$, and constants $C, D>0$ such that
$$
C a^\mu \le ||a||\le D a^\eta, \ \ a\in Cl(A^+).
$$

We remark that $\rho(H_j)>0$ for all $j$. So, we can find $c, d>0$ such that

$$
a^{c\rho} \le a^\mu  \le a^\eta \le a^{d\rho}, \ \ a\in Cl(A^+).
$$

We record this in the  next lemma:

\begin{Lem}\label{rmg-1} There exists real constants $c, C, d, D>0$ such that 
  $$
C a^{c\rho}  \le ||a||\le D a^{d\rho}, \ \ a\in Cl(A^+).
$$
\end{Lem}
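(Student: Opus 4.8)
The plan is to assemble the statement directly from the two estimates displayed in the paragraphs preceding it. The substantive input is (\cite{W1}, Lemma 2.A.2.3), which furnishes $\mu, \eta \in \mathfrak{a}^\star$ with $\mu(H_i), \eta(H_i) > 0$ for $i = 1, \dots, r$ and constants $C, D > 0$ satisfying $C a^\mu \le ||a|| \le D a^\eta$ for $a \in Cl(A^+)$; its own proof uses property (4) of the norm to reduce the estimate to a linear comparison on $\mathfrak{a}$. Granting this, it remains only to trap $a^\mu$ and $a^\eta$ between two powers of $a^\rho$ on $Cl(A^+)$.

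To that end I would first record the monotonicity principle that makes such comparisons work. Since $A^+ = \{\exp(\sum_{j=1}^r t_j H_j) : t_1, \dots, t_r > 0\}$ --- because the condition $a^\alpha > 1$ for all $\alpha \in \Phi^+(\mathfrak{g}, \mathfrak{a})$ is equivalent to $\alpha_i(H) > 0$ for all simple $\alpha_i$, i.e. to positivity of all the coordinates $t_i = \alpha_i(H)$ --- we have $Cl(A^+) = \{\exp(\sum_j t_j H_j) : t_j \ge 0\}$. Hence for $a = \exp(\sum_j t_j H_j) \in Cl(A^+)$ and any $\nu \in \mathfrak{a}^\star$ one has $a^\nu = \exp(\sum_j t_j \nu(H_j))$, so that $\nu_1(H_j) \le \nu_2(H_j)$ for all $j$ forces $a^{\nu_1} \le a^{\nu_2}$ throughout $Cl(A^+)$.

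Next I would check $\rho(H_j) > 0$ for each $j$: writing every $\alpha \in \Phi^+(\mathfrak{g}, \mathfrak{a})$ as $\sum_i n_{i,\alpha}\alpha_i$ with integers $n_{i,\alpha} \ge 0$, the coefficient of $\alpha_j$ in $\rho = \tfrac12\sum_{\alpha} m(\alpha)\alpha$ equals $\tfrac12\sum_{\alpha} m(\alpha) n_{j,\alpha} \ge \tfrac12 m(\alpha_j) > 0$, and evaluating against the dual basis gives $\rho(H_j) > 0$. Now set $c = \min_j \mu(H_j)/\rho(H_j) > 0$ and $d = \max_j \eta(H_j)/\rho(H_j) > 0$; then $c\,\rho(H_j) \le \mu(H_j)$ and $\eta(H_j) \le d\,\rho(H_j)$ for every $j$, so by the monotonicity principle $a^{c\rho} \le a^\mu$ and $a^\eta \le a^{d\rho}$ on $Cl(A^+)$. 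Chaining these with Wallach's estimate gives $C a^{c\rho} \le C a^\mu \le ||a|| \le D a^\eta \le D a^{d\rho}$, which is the claim.

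There is no real obstacle here: everything of depth is hidden inside (\cite{W1}, Lemma 2.A.2.3). The one point deserving care is the identification of $Cl(A^+)$ with the set of $\exp(\sum_j t_j H_j)$ having all $t_j \ge 0$ --- it is precisely the nonnegativity of these coordinates that legitimizes passing from an inequality between linear functionals on the dual basis $\{H_1,\dots,H_r\}$ to the corresponding inequality between the numbers $a^{\nu_1}$ and $a^{\nu_2}$.
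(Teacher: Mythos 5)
Your proposal is correct and follows essentially the same route as the paper: the paper's ``proof'' is precisely the paragraph preceding the lemma, namely invoking (\cite{W1}, Lemma 2.A.2.3) for the bounds via $\mu,\eta$, remarking that $\rho(H_j)>0$, and then comparing $a^\mu$ and $a^\eta$ with powers $a^{c\rho}$, $a^{d\rho}$ on $Cl(A^+)$. You merely supply the details the paper leaves implicit (the description of $Cl(A^+)$ in the coordinates $t_j=\alpha_j(H)$, the positivity of $\rho(H_j)$, and the explicit choices $c=\min_j \mu(H_j)/\rho(H_j)$, $d=\max_j \eta(H_j)/\rho(H_j)$), so no further comment is needed.
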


A consequence of above integration formula and Lemma \ref{rmg-1}
is the following lemma (see \cite{W1}, Lemma 2.A.2.4):

\begin{Lem}\label{rmg-2} Maintaining above assumptions,  we have  $\int_G ||g||^{-m} dg<\infty$  for $m> \max_{1\le i \le r} \frac{1}{c \rho(H_i)}$.
\end{Lem}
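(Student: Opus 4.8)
The plan is to combine the integration formula for the Haar measure displayed just above with the two-sided estimate of Lemma \ref{rmg-1}. First I would reduce the integral over $G$ to an integral over $Cl(A^+)$: using
$$
\int_G ||g||^{-m} dg = \int_K \int_{A^+} \int_K D(a)\, ||k_1 a k_2||^{-m}\, dk_1\, da\, dk_2,
$$
property (4) of the norm gives $||k_1 a k_2|| = ||a||$ for $a \in Cl(A^+)$ (writing $a = \exp(H)$ with $H$ in the closed positive chamber, so $H \in \mathfrak p$ and $t=1$), and since $K$ has finite volume the two $K$-integrations contribute only a finite constant. Thus finiteness of $\int_G ||g||^{-m} dg$ is equivalent to finiteness of $\int_{A^+} D(a)\, ||a||^{-m}\, da$.

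Next I would bound the integrand on $A^+$. By Lemma \ref{rmg-1} we have $||a||^{-m} \le C^{-m} a^{-mc\rho}$ for $a \in Cl(A^+)$ (here using $m > 0$ so the inequality direction is preserved; note $C^{-m}$ is just a positive constant). For the Weyl factor, on $A^+$ one has $\sinh(\alpha(H))^{m(\alpha)} \le \bigl(\tfrac12 e^{\alpha(H)}\bigr)^{m(\alpha)} \cdot (\text{bounded})$, and multiplying over $\alpha \in \Phi^+(\mathfrak g,\mathfrak a)$ gives $D(a) \le \text{const} \cdot e^{2\rho(H)} = \text{const} \cdot a^{2\rho}$, since $\sum_{\alpha \in \Phi^+} m(\alpha)\alpha = 2\rho$ by definition of $\rho$. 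Hence $D(a)\, ||a||^{-m} \le \text{const} \cdot a^{(2 - mc)\rho}$ on $A^+$.

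It then remains to check that $\int_{A^+} a^{(2-mc)\rho}\, da < \infty$ when $mc > \dots$, i.e. when the exponent is sufficiently negative. Writing $H = \sum_{i=1}^r t_i H_i$ with $t_i \ge 0$ for $a = \exp(H) \in Cl(A^+)$, the measure $da$ becomes (a constant times) $dt_1 \cdots dt_r$ and $a^{(2-mc)\rho} = \prod_i e^{(2-mc)\rho(H_i) t_i}$; since $\rho(H_i) > 0$, this product is integrable over $[0,\infty[^r$ precisely when $(2 - mc)\rho(H_i) < 0$ for every $i$, that is when $m > \tfrac{2}{c\rho(H_i)}$ for all $i$, which is implied by $m > \max_{1 \le i \le r} \tfrac{1}{c\rho(H_i)}$ after possibly adjusting the constant $c$ (or, more honestly, the stated bound is exactly of this shape and the $c$ in the lemma can be taken to absorb the factor $2$). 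I expect the only real bookkeeping obstacle to be tracking the constant so that the final threshold matches the one in the statement; the structure of the argument — Haar formula, then Lemma \ref{rmg-1}, then a product of one-dimensional exponential integrals — is otherwise routine, and indeed this is precisely \cite{W1}, Lemma 2.A.2.4.
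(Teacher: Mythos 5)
Your argument is structurally the same proof as the paper's: reduce via the $KA^+K$ integration formula to $\int_{A^+}D(a)\,||a||^{-m}\,da$, bound $||a||^{-m}\le C^{-m}a^{-mc\rho}$ by Lemma \ref{rmg-1}, bound $D$ by an exponential of $\rho$ using $\sinh x\le \tfrac12 e^{x}$, pass to the coordinates $t_{i}$ and finish with a product of one--dimensional exponential integrals. The only point needing repair is the final bookkeeping: from your (correct) estimate $D(a)\le \mathrm{const}\cdot a^{2\rho}$ the convergence condition $(2-mc)\rho(H_i)<0$ is equivalent, after dividing by $\rho(H_i)>0$, to $m>2/c$ — not to $m>2/(c\rho(H_i))$ — so what your computation literally delivers is convergence for $m>2/c$, and the stated threshold $m>\max_{i}1/(c\rho(H_i))$ cannot be recovered by "absorbing the factor $2$ into $c$" (replacing $c$ by a smaller constant keeps Lemma \ref{rmg-1} valid but makes the threshold $2/c$ larger). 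You are in good company: the paper's own proof bounds $D(\exp H)$ by $\exp\bigl(\sum_i\rho(H_i)t_i\bigr)$, dropping the factor $2$ coming from $\sum_{\alpha}m(\alpha)\alpha=2\rho$, and then writes the exponent as $1-mc\rho(H_i)$, which is not the product of its own two estimates either; the precise constant in the statement is an artifact of that bookkeeping. The substance of the lemma — finiteness of $\int_G||g||^{-m}dg$ for all sufficiently large $m$, which is all that is used — is established by your argument; just state the conclusion as convergence for $m>2/c$ (or for $m$ sufficiently large) rather than forcing it into the stated threshold.
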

\begin{proof} Let $m>0$. Then, by above properties of the norm and the integration formula
  $$\int_G ||g||^{-m}dg= \int_{A^+} D(a)||a||^{-m} da=\int_{\substack{H\in \mathfrak a\\
      \alpha_1(H)>0,\ldots
      \alpha_r(H)>0  }}  D(\exp{H})\ ||\exp{H}||^{-m} dH,
  $$
  where $dH$ is any Euclidean measure on $\mathfrak a$. We fix a basis $H_1, \ldots, H_r$ described above. In this basis,
  the right--hand side  becomes
 $$
 \int_{\substack{t_1>0,\ldots t_r>0  }}  D\left(\exp{\left(\sum_{i=1}^r t_iH_i\right)}\right)\ 
 \left|\left|\exp{\left(\sum_{i=1}^r t_iH_i\right)}\right|\right|^{-m} dt_1\cdots dt_r.
 $$
 Obviously, the definition of $D$ implies
$$
 D\left(\exp{\left(\sum_{i=1}^r t_iH_i\right)}\right)\le  \exp{\left(\sum_{i=1}^r \rho(H_i)t_i\right)} ,
 $$
 for $t_i>0$, $i=1, \ldots, r$. By Lemma \ref{rmg-1}, there exists real constants $c, C$ such that 
  $$
C \exp{\left(\sum_{i=1}^r c\rho(H_i)t_i\right)}  \le  \left|\left|\exp{\left(\sum_{i=1}^r t_iH_i\right)}\right|\right| ,$$
for $t_i>0$, $i=1, \ldots, r$. Hence, the integral is 
$$
\le  C^{-m}\int_{\substack{t_1>0,\ldots t_r>0  }}  \exp{\left(\sum_{i=1}^r  \left(1-m c \rho(H_i)\right)t_i\right)} 
dt_1\cdots dt_r.
$$
By elementary calculus, the integral is finite for
$$
m> \max_{1\le i \le r} \frac{1}{c \rho(H_i)}.
$$
\end{proof}

\section{Preliminaries on Automorphic Forms}\label{paf}

In this section we assume that
 $G$ is a group of $\mathbb R$--points of a
semisimple algebraic group $\mathcal G$ defined over $\mathbb Q$. Assume that $G$ is not
compact and connected.
Let $\Gamma\subset G$ be congruence subgroup with respect to the arithmetic structure
given by the
fact that $\mathcal G$ defined over $\mathbb Q$ (see \cite{BJ}).
Then,  $\Gamma$ is a discrete subgroup
of $G$ and it has a finite covolume.

\medskip
An automorphic form (or a $K$--finite automorphic form; see \cite{cogdell}) for $\Gamma$
is a function $f\in C^\infty(G)$ satisfying the following
three conditions (\cite{W3} or \cite{BJ}):

\begin{itemize}
\item[(A-1)] $f$ is  $\cal Z(\mathfrak g_{\mathbb C})$--finite and $K$--finite on the right;
\item[(A-2)] $f$ is left--invariant under $\Gamma$ i.e., $f(\gamma x)=f(x)$ for all 
  $\gamma\in \Gamma$, $x\in G$;
\item[(A-3)] there exists $r\in\mathbb R$, $r>0$
  such that for each $u\in \mathcal U(\mathfrak g_{\mathbb C})$ there exists a constant
  $C_u>0$ such that $\left|u.f(x)\right|\le C_u \cdot ||x||^r$, for all $x\in G$.
\end{itemize}
A smooth automorphic form (see \cite{casselman-1}, \cite{cogdell}) for $\Gamma$
is a function $f\in C^\infty(G)$ satisfying (A1)--(A3) except possibly $K$--finiteness.
We discuss smooth automorphic forms in more detail the next section.

\medskip
We write $\mathcal A(\Gamma\backslash G)$ (resp., $\mathcal A^\infty(\Gamma\backslash G)$)
for the vector space of all automorphic forms (resp., smooth automorphic forms). Obviously, 
$\mathcal A(\Gamma\backslash G) \subset \mathcal A^\infty(\Gamma\backslash G)$. 
It is easy to see that $\mathcal A(\Gamma\backslash G)$ is a $(\mathfrak g, K)$--module (using \cite{hc},
Theorem 1 and an argument simiar to the one used in the proof of Lemma \ref{paf-10}),
and since $G$ is connected, the space $\mathcal A^\infty(\Gamma\backslash G)$
is $G$--invariant . An automorphic form
$f\in \mathcal A^\infty(\Gamma\backslash G)$  is a $\Gamma$--cuspidal automorphic form if for every proper
$\mathbb Q$--proper parabolic subgroup $\mathcal P\subset \mathcal G$ we have
$$
\int_{U\cap \Gamma \backslash U} f(ux)dx=0, \ \ x\in G,
$$
where $U$ is the group of $\mathbb R$--points of the unipotent radical of  $\mathcal P$.
We remark that the quotient $U\cap \Gamma \backslash U$ is compact. We use normalized $U$--invariant measure on
$U\cap \Gamma \backslash U$. The space of all $\Gamma$--cuspidal automorphic forms (resp.,
$\Gamma$--cuspidal smooth automorphic forms) for $\Gamma$
is denoted by $\mathcal A_{cusp}(\Gamma\backslash G)$ (resp., $\mathcal A^\infty_{cusp}(\Gamma\backslash G)$).
The space $\mathcal A_{cusp}(\Gamma\backslash G)$ is a   $(\mathfrak g, K)$--submodule of $\mathcal A(\Gamma\backslash G)$.
The space $\mathcal A^\infty_{cusp}(\Gamma\backslash G)$ is $G$--invariant.

\vskip .2in 
Following Casselman \cite{casselman-1}, we define
$$
||g||_{\Gamma\setminus G}=\inf_{\gamma\in \Gamma} ||\gamma g||, \ \ g \in G.
$$
It is obvious that $||\cdot ||_{\Gamma\setminus G}$ is $\Gamma$--invariant on the right, and that
$||g||_{\Gamma\setminus G}\le ||g||$ for all $g\in G$. The condition (A-3) is equivalent to
\begin{itemize}
\item[(A-3')] there exists $r\in\mathbb R$, $r>0$
  such that for each $u\in \mathcal U(\mathfrak g_{\mathbb C})$ there exists a constant
  $C_u>0$ such that $\left|u.f(x)\right|\le C_u \cdot ||x||^r_{\Gamma\setminus G}$, for all $x\in G$.
\end{itemize}

\vskip .2in
We recall the following standard result:

\begin{Lem}\label{paf-1} Under above assumptions, we have the following:
  \begin{itemize}
         \item[(a)] If  $f\in C^\infty(G)$ satisfies (A-1), (A-2), and
           there exists  $p\ge 1$ such that $f\in L^p(\Gamma\backslash G)$, then f satisfies (A-3), and it is
           therefore an
           automorphic form. We speak about $p$--integrable automorphic form, for $p=1$ (resp., $p=2$) we speak
           about integrable
        (resp., square--integrable) automorphic form.
      \item[(b)] Let $p\ge 1$. Every $p$--integrable automorphic form is integrable.
      \item[(c)] Bounded integrable automorphic form is square--integrable.
      \item[(d)] If $f$ is square integrable automorphic form, then the minimal $G$--invariant closed subspace of
        $L^2(\Gamma\backslash G)$ is a direct is of finitely many irreducible unitary representations.
       \item[(e)] Every $\Gamma$--cuspidal automorphic form is square--integrable.
    \end{itemize}
  \end{Lem}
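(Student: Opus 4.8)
The plan is to prove the five parts essentially in a single chain, with (a) as the engine and the classical integration formula plus Lemma \ref{rmg-2} as the main tool. For part (a), the idea is to bootstrap from $L^p$-membership to pointwise moderate growth. First I would use that $f$ is $\cal Z(\mathfrak g_{\mathbb C})$--finite and $K$--finite, hence (by elliptic regularity, since some polynomial in the Casimir annihilates $f$) $f$ is real-analytic and lies in a finite-dimensional space under $\cal Z(\mathfrak g_{\mathbb C})$ and under right $K$-translation. The standard device is the following: choose $\alpha \in C_c^\infty(G)$ that acts as a near-identity on the (finite-dimensional, by admissibility of the relevant $\cal Z$-eigenspaces) right $K\times\cal Z$-isotypic piece containing $f$, so that $f = f * \alpha$ exactly; then for any $u \in \mathcal U(\mathfrak g_{\mathbb C})$,
$$
u.f(x) = (f * (u.\alpha))(x) = \int_G f(xy)\,(u.\alpha)(y^{-1})\,dy = \int_{\mathrm{supp}\,\alpha} f(xy)\,(u.\alpha)(y^{-1})\,dy.
$$
Taking absolute values and applying Hölder with the conjugate exponent $q$ of $p$ bounds $|u.f(x)|$ by $C_u \cdot \|f\|_{L^p(\Gamma\backslash G, \text{locally})}$-type quantities on the translate $x\cdot\mathrm{supp}\,\alpha$; because $f$ is left $\Gamma$-invariant one may fold the integral into $\Gamma\backslash G$ and estimate $\int_{x\cdot\mathrm{supp}\,\alpha} |f|^p$ by the global $\|f\|_p^p$. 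This yields a uniform bound $|u.f(x)| \le C_u$ on all of $G$ — in fact bounded, so (A-3) holds with any $r > 0$, and $f$ is an automorphic form.

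For the remaining parts I would argue as follows. Part (b): if $f$ is a $p$-integrable automorphic form then by (a) it is bounded, say $|f|\le M$; then $|f|^p \le M^{p-1}|f| $ pointwise, so $\int_{\Gamma\backslash G}|f| \le M^{1-p}\int_{\Gamma\backslash G}|f|^p < \infty$ when $p\ge 1$ — wait, for $p\ge1$ one instead writes $|f| = |f|\cdot 1$ and uses that on the finite-volume space $\Gamma\backslash G$ boundedness gives $f\in L^1$ directly from $|f|\le M$ and $\mathrm{vol}(\Gamma\backslash G)<\infty$; combined with (a), every $p$-integrable automorphic form is bounded hence integrable. Part (c): if $f$ is bounded and integrable, $|f|^2 \le \|f\|_\infty |f|$, so $\int|f|^2 \le \|f\|_\infty \int|f| < \infty$. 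Part (d): once $f$ is square-integrable and automorphic, it is $K$-finite and $\cal Z(\mathfrak g_{\mathbb C})$-finite inside $L^2(\Gamma\backslash G)$; the closed $G$-span $V$ of $f$ decomposes discretely because $f$ lies in the $\cal Z$-finite, $K$-finite part, and Gelfand–Graev–Piatetski-Shapiro / Harish-Chandra finiteness (the $\cal Z$-eigenspaces of a fixed $K$-type in $L^2(\Gamma\backslash G)$ are finite-dimensional for $\Gamma$ of finite covolume, at least on the discrete part) forces $V$ to be a finite sum of irreducibles — here I would cite the standard structure of the discrete spectrum. Part (e): a $\Gamma$-cuspidal automorphic form is rapidly decreasing modulo $\Gamma$ by the classical Gelfand–Piatetski-Shapiro argument (vanishing of constant terms along all rational parabolics forces decay faster than any $\|g\|_{\Gamma\backslash G}^{-N}$ in a Siegel domain), and by Lemma \ref{rmg-2} such decay is $L^p$ for every $p$, in particular square-integrable.

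The main obstacle is part (a), and within it the care needed to (i) justify that $f$ is fixed by convolution with a suitable $\alpha\in C_c^\infty(G)$, which requires knowing the right $K$-type of $f$ is fixed and the $\cal Z$-eigenspace finite-dimensional, and (ii) pass rigorously from the convolution identity to a uniform pointwise bound using $L^p$ on the quotient — the fold from $\int_G$ to $\int_{\Gamma\backslash G}$ needs the local finiteness of the covering $G \to \Gamma\backslash G$ restricted to a compact set, which is where discreteness of $\Gamma$ and compactness of $\mathrm{supp}\,\alpha$ enter. Parts (b)–(e) are then short, modulo invoking the decomposition of the discrete/cuspidal spectrum for (d) and the classical rapid-decay theorem for cusp forms in (e); I would cite \cite{BJ}, \cite{W3}, or \cite{hc} for these rather than reprove them.
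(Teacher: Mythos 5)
There is a genuine gap in your proof of (a), and it propagates into (b). After writing $u.f(x)=\int_{x\cdot\mathrm{supp}\,\alpha} f(y)\,(u.\alpha)(y^{-1}x)\,dy$ and applying H\"older, you ``fold'' $\int_{x\cdot\mathrm{supp}\,\alpha}|f|^p$ into $\Gamma\backslash G$ and bound it by $\|f\|_{L^p(\Gamma\backslash G)}^p$. That step is false as stated: the fold introduces the covering multiplicity $\#\{\gamma\in\Gamma:\ \gamma\,x\,\mathrm{supp}\,\alpha\cap x\,\mathrm{supp}\,\alpha\neq\emptyset\}=\#\bigl(\Gamma\cap x\,\mathrm{supp}\,\alpha\,(\mathrm{supp}\,\alpha)^{-1}x^{-1}\bigr)$, which is unbounded in $x$ whenever $\Gamma\backslash G$ is noncompact (e.g.\ going up a cusp of $SL_2(\mathbb Z)\backslash SL_2(\mathbb R)$ it grows like a power of $\|x\|$). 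Consequently your conclusion that $|u.f(x)|\le C_u$ uniformly, i.e.\ that every $p$-integrable $\mathcal Z(\mathfrak g_{\mathbb C})$-finite, $K$-finite function is \emph{bounded}, is not only unproved but false: residual square-integrable automorphic forms on higher-rank groups are in general unbounded. The correct output of your convolution argument, once the multiplicity is estimated by a lattice-point count ($\#\{\gamma\in\Gamma:\|\gamma\|\le T\}$ grows at most polynomially in $T$ by discreteness and the volume of norm balls), is exactly the polynomial bound (A-3) with an exponent $r$ independent of $u$ — which is all the lemma asserts, and is the standard argument behind the reference to \cite{BJ}. Your part (b) then cannot lean on boundedness; it should simply be H\"older's inequality $\|f\|_1\le \mathrm{vol}(\Gamma\backslash G)^{1/q}\,\|f\|_p$ using finiteness of the covolume, which is precisely the paper's proof and needs no input from (a).

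For the record, the paper does not reprove (a) and (e) at this point: it cites \cite{BJ} (and later gives an independent proof of (a) via Casselman's Schwartz space, Proposition \ref{src-6}); it proves (b) by H\"older, calls (c) obvious, and gets (d) from (\cite{W1}, Corollary 3.4.7 and Theorem 4.2.1), i.e.\ the general fact that a $K$-finite, $\mathcal Z(\mathfrak g_{\mathbb C})$-finite vector of a unitary representation generates a finite direct sum of irreducibles — your sketch of (d), which invokes finite-dimensionality of $\mathcal Z$-eigenspaces of a fixed $K$-type in $L^2(\Gamma\backslash G)$, is shakier than this and should be replaced by that citation. Your (c) is fine, and your (e) via rapid decay of cusp forms on Siegel sets is the standard route (boundedness plus finite volume already suffices; Lemma \ref{rmg-2} concerns integration over $G$, not the quotient, so it is not the right tool there).
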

\begin{proof} For the claims (a) and (e) we refer to \cite{BJ} and reference there.
  Since the volume of $\Gamma \backslash G$ is finite, the claim (b) follows from H\" older inequality (as in
  \cite{MuicMathAnn}, Section 3). The claim (c) is obvious. The claim (d) follows from (\cite{W1}, Corollary
  3.4.7 and Theorem  4.2.1).
\end{proof}

\medskip
Proposition \ref{src-6} gives simple proof of Lemma \ref{paf-1} (a) using some
results of Casselman \cite{casselman-1}.

\medskip 
We include the proof of the following standard result since it will be useful in
clarification  of various issues in the next section. 
\medskip

\begin{Lem}\label{paf-10} If $f\in C^\infty(G)$ satisfies (A-1), (A-2) and
  there exists constants $r>0$, $C>0$ such that $\left|f(x)\right|\le C \cdot ||x||^r$, for all
  $x\in G$, then (A-3)
      also holds. 
\end{Lem}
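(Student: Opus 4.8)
The goal is to upgrade a polynomial bound on $f$ itself to a polynomial bound on $u.f$ for every $u\in\mathcal U(\mathfrak g_{\mathbb C})$, using only that $f$ is $\mathcal Z(\mathfrak g_{\mathbb C})$-finite, $K$-finite, and $\Gamma$-invariant. The plan is the standard elliptic-regularity / convolution argument of Harish-Chandra (\cite{hc}). First I would fix an element $u\in\mathcal U(\mathfrak g_{\mathbb C})$. Since $f$ is $\mathcal Z(\mathfrak g_{\mathbb C})$-finite, there is an ideal $J\subset\mathcal Z(\mathfrak g_{\mathbb C})$ of finite codimension annihilating $f$; choose $z\in\mathcal Z(\mathfrak g_{\mathbb C})$ in that ideal so that $1-z$ (or more precisely an operator of the form $1 + (\text{something in }J)$) can be inverted against $f$. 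Concretely, by Harish-Chandra's lemma there exists $\alpha\in C_c^\infty(G)$ such that $f = f * \alpha$ when $f$ is $\mathcal Z$-finite and $K$-finite on the right; this is where \cite{hc}, Theorem 1 enters. Then $u.f = u.(f*\alpha) = f*(u.\alpha)$, and $u.\alpha\in C_c^\infty(G)$ as well.

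Next I would estimate the convolution pointwise. For $x\in G$,
$$
\left| (u.f)(x)\right| = \left| \int_G f(xy)\,(u.\alpha)(y^{-1})\,dy\right|
\le \int_{\supp(u.\alpha)} \left|f(xy)\right|\,\left|(u.\alpha)(y^{-1})\right|\,dy .
$$
Using the hypothesis $\left|f(xy)\right|\le C\,||xy||^r$ together with the submultiplicativity (property (2) of the norm), $||xy||\le ||x||\cdot||y||$, and the fact that $||y||$ is bounded on the compact set $\supp(u.\alpha)$, one gets
$$
\left| (u.f)(x)\right| \le C\,||x||^r \int_{\supp(u.\alpha)} ||y||^r \left|(u.\alpha)(y^{-1})\right|\,dy
=: C_u\,||x||^r ,
$$
with $C_u$ finite since the integrand is a compactly supported bounded function. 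This is exactly (A-3) (with the same exponent $r$, in fact).

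The main obstacle — and the only genuinely nontrivial input — is the existence of the function $\alpha\in C_c^\infty(G)$ with $f = f*\alpha$; this is the content of Harish-Chandra's result that a $\mathcal Z(\mathfrak g_{\mathbb C})$-finite, $K$-finite smooth function is ``$G$-finite under convolution'' in the appropriate sense, relying on the regularity theory of elliptic operators (the Casimir plus a Laplacian on $K$) and a parametrix construction. I would simply cite \cite{hc}, Theorem 1 for this rather than reproving it. Everything else is the elementary norm estimate above, and the passage from (A-3) to the equivalent form (A-3') is immediate since $||g||_{\Gamma\setminus G}\le ||g||$ — though here one actually wants the reverse direction on a fundamental domain, which follows because $f$ is left $\Gamma$-invariant, so $\left|(u.f)(x)\right| = \left|(u.f)(\gamma x)\right|$ for all $\gamma\in\Gamma$, hence one may replace $||x||$ by $\inf_{\gamma}||\gamma x|| = ||x||_{\Gamma\setminus G}$ in the bound. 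I would include that last remark to connect cleanly with the discussion preceding the lemma.
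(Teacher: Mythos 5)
Your proposal is correct and follows essentially the same route as the paper: invoke Harish--Chandra's theorem (\cite{hc}, Theorem 1) to write $f=f\star\alpha$ with $\alpha\in C_c^\infty(G)$, differentiate under the convolution so that $u.f=f\star(u.\alpha)$, and then bound the integral using submultiplicativity of the norm and compactness of $\supp(u.\alpha)$, yielding (A-3) with the same exponent $r$. The closing remark about passing to $||x||_{\Gamma\setminus G}$ is correct but not needed for the lemma as stated.
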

\begin{proof} By the assumption (A-1) and
  a theorem of Harish--Chandra (see \cite{hc}, Theorem 1) there exists $\alpha\in C_c^\infty(G)$ such that
  $f=f\star \alpha$.  This implies that for $u\in \mathcal U(\mathfrak g_{\mathbb C})$ we have
  $$
  uf(x)=\int_G f(y) u\alpha(y^{-1}x)dy=\int_G f(xy^{-1}) u\alpha(y) dy.
  $$
  Hence, by using the assumption and properties (1) and (2) of the norm (see Section \ref{rmg})
  \begin{align*}
    |uf(x)|&\le \int_G \left|f(xy^{-1})\right| \cdot \left| u\alpha(y)\right| dy
           \le C \int_G ||xy^{-1}||^r \cdot \left| u\alpha(y)\right| dy\\
    & \le C ||x||^r \int_G  ||y||^r  \cdot \left| u\alpha(y)\right| dy
  \end{align*}
  which proves the claim.
  \end{proof}

\section{Some Results of Casselman}\label{src}

In this section we  assume that $G$ is a semisimple connected Lie group with finite center.
We assume that $\Gamma$ is a discrete subgroup of $G$. For example, $\Gamma$ could be a congruence subgroup or just
a trivial group. Main result of this section is observation in
Lemma \ref{src-5} used in Section \ref{aaf} in the construction of smooth automorphic forms. 

\medskip
We recall the definition of the Schwartz space
$\mathcal S\left(\Gamma\backslash G\right)$ defined by Casselman 
(\cite{casselman-1}, page 292). It consists of all functions $f\in C^\infty(G)$ satisfying the following
conditions:
\medskip

\begin{itemize}
\item[(CS-1)] $f$ is left--invariant under $\Gamma$ i.e., $f(\gamma x)=f(x)$ for all
  $\gamma\in \Gamma$, $x\in G$;
\item[(CS-2)] $||f||_{u, -n}<\infty$ for all $u\in \mathcal U(\mathfrak g_{\mathbb C})$, and all natural
  numbers $n\ge 1$.
\end{itemize}

\medskip
In above definition, for  $u\in \mathcal U(\mathfrak g_{\mathbb C})$, and a real number $s$, we let
$$
||f||_{u, s}\overset{def}{=} \sup_{x\in G}  ||x||_{\Gamma\setminus G}^{-s} \left|u.f(x)\right|.
$$
Since $||x||_{\Gamma\setminus G} \ge 1$, we have
$$
||f||_{u, s'}\le ||f||_{u, s},
$$
for $s'>s$.

\vskip .2in 
We recall the following result (see \cite{casselman-1}, 1.8 Proposition):

\begin{Prop}\label{src-1} Using above notation, we have the following:
  \begin{itemize}
  \item[(i)]  The Schwartz space $\mathcal S\left(\Gamma\backslash G\right)$ is a Fr\' echet space
    under the seminorms:
    $||\ ||_{u, -n}$, $u\in \mathcal U(\mathfrak g_{\mathbb C})$, $n\in\mathbb Z_{\ge 1}$.
  \item[(ii)] The right regular representation of $G$ on $\mathcal S\left(\Gamma\backslash G\right)$ is a smooth  Fr\' echet
    representation of moderate growth. 
    \end{itemize}
\end{Prop}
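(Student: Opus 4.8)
The plan is to verify directly the three things the statement packages together: completeness of $\mathcal S(\Gamma\backslash G)$ in the given family of seminorms (hence that it is Fr\'echet), moderate growth of the right regular action $R(g)f(x)=f(xg)$, and smoothness of that action.

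For (i), I would first replace the defining family of seminorms by a countable one. Fixing an ordered basis of $\mathfrak g_{\mathbb C}$ and the associated Poincar\'e--Birkhoff--Witt basis $(u_j)_{j\ge 1}$ of $\mathcal U(\mathfrak g_{\mathbb C})$, the inequality $||f||_{u,-n}\le\sum_j|c_j|\,||f||_{u_j,-n}$ for a finite sum $u=\sum_j c_ju_j$ shows that $\{||\cdot||_{u_j,-n}:j,n\ge1\}$ defines the same topology, so $\mathcal S(\Gamma\backslash G)$ is metrizable. For completeness, given a sequence $(f_k)$ Cauchy in all these seminorms, the fact that $||x||_{\Gamma\setminus G}$ is $\ge1$ and bounded on any compact set makes each $(u.f_k)_k$ uniformly Cauchy on compacta, so $f_k\to f$ in $C^\infty(G)$ with $u.f=\lim_k u.f_k$; the limit $f$ inherits (CS-1), the boundedness of $(\,||f_k||_{u,-n})_k$ together with pointwise passage to the limit in $||x||_{\Gamma\setminus G}^{\,n}|u.f_k(x)|$ gives (CS-2), and the same pointwise argument applied to $||f_k-f_\ell||_{u,-n}\le\varepsilon$ (letting $\ell\to\infty$) yields $f_k\to f$ in $\mathcal S(\Gamma\backslash G)$. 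Hence $\mathcal S(\Gamma\backslash G)$ is Fr\'echet.

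For the moderate growth in (ii), the starting point is the intertwining identity, obtained by differentiating one--parameter subgroups,
\[
u.\bigl(R(g)f\bigr)=R(g)\bigl((\mathrm{Ad}(g^{-1})u).f\bigr),\qquad u\in\mathcal U(\mathfrak g_{\mathbb C}),\ g\in G,
\]
$\mathrm{Ad}(g^{-1})$ being extended to an automorphism of $\mathcal U(\mathfrak g_{\mathbb C})$. The substitution $y=xg$ and the bound $||yg^{-1}||_{\Gamma\setminus G}\le||y||_{\Gamma\setminus G}\,||g||$ (which follows from properties (1)--(2) of the norm exactly as $||\cdot||_{\Gamma\setminus G}\le||\cdot||$ does) give $||R(g)f||_{u,-n}\le||g||^{\,n}\,||f||_{\mathrm{Ad}(g^{-1})u,-n}$. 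If $u$ has degree $\le m$, then $\mathrm{Ad}(g^{-1})u=\sum_{j:\deg u_j\le m}c_j(g)u_j$ with each $c_j(g)$ polynomial in the matrix entries of $\mathrm{Ad}(g^{-1})$, and since $||\mathrm{Ad}(g)||\le C_0||g||^{\,d_0}$ — a consequence of Lemma \ref{rmg-1} applied to $\mathrm{ad}$ after a Cartan--type decomposition $g=k_1\exp(X)k_2$ — one gets $|c_j(g)|\le C||g||^{\,d}$ and therefore
\[
||R(g)f||_{u,-n}\le C\,||g||^{\,n+d}\!\!\sum_{j:\deg u_j\le m}\!\!||f||_{u_j,-n},
\]
which is precisely the moderate--growth estimate; in passing it also shows $R(g)\mathcal S(\Gamma\backslash G)\subseteq\mathcal S(\Gamma\backslash G)$.

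For smoothness, note first that $||X.f||_{u,-n}=||f||_{uX,-n}$, so $\mathcal S(\Gamma\backslash G)$ is stable under the $\mathfrak g$--action. Continuity of $G\times\mathcal S(\Gamma\backslash G)\to\mathcal S(\Gamma\backslash G)$ reduces, via the moderate--growth bound, to continuity of each orbit map $g\mapsto R(g)f$ at $e$; to prove the latter I would estimate $||R(g)f-f||_{u,-n}$ by first separating, through the intertwining identity and $y=xg$, a term which is $\le||g||^{\,n}||f||_{(\mathrm{Ad}(g^{-1})-I)u,-n}\to0$, leaving $\sup_x||x||_{\Gamma\setminus G}^{\,n}|h(xg)-h(x)|$ with $h=u.f\in\mathcal S(\Gamma\backslash G)$, and then splitting this supremum into $||x||_{\Gamma\setminus G}\le T$, where uniform continuity of $h$ on the compact set $\{||x||\le T\}$ suffices (the whole expression being left $\Gamma$--invariant in $x$), and $||x||_{\Gamma\setminus G}>T$, where $||x||_{\Gamma\setminus G}^{\,n}|h(x)|\le T^{-1}\sup_y||y||_{\Gamma\setminus G}^{\,n+1}|h(y)|$ and the analogous bound after $y=xg$ make the tail small uniformly for $g$ near $e$. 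Once continuity is known, $R(\exp tX)f-f=\int_0^tR(\exp sX)(X.f)\,ds$ gives
\[
\tfrac1t\bigl(R(\exp tX)f-f\bigr)-X.f=\tfrac1t\int_0^t\bigl(R(\exp sX)(X.f)-X.f\bigr)\,ds\ \longrightarrow\ 0
\]
in $\mathcal S(\Gamma\backslash G)$ as $t\to0$; thus the orbit maps are $C^1$ with derivative $g\mapsto R(g)(X.f)$, and since $X.f\in\mathcal S(\Gamma\backslash G)$ one iterates to obtain $C^\infty$. Together with continuity and moderate growth this is exactly the assertion that $(R,\mathcal S(\Gamma\backslash G))$ is a smooth Fr\'echet representation of moderate growth. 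The step I expect to be the real obstacle is precisely the tail estimate in this last paragraph: it is where one must use that a Schwartz function decays strictly faster than the weight $||x||_{\Gamma\setminus G}^{\,n}$, trading a power of that weight for the seminorm $\sup_x||x||_{\Gamma\setminus G}^{\,n+1}|u.f(x)|$, so that the contribution from $||x||_{\Gamma\setminus G}$ large is uniformly negligible while the compact core is routine; everything else is formal once the intertwining identity and the polynomial comparison $||\mathrm{Ad}(g)||\le C_0||g||^{\,d_0}$ are in hand.
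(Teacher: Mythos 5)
Your argument is correct, but note that the paper does not actually prove this proposition: it is recalled from Casselman, and the paper's entire justification is the citation (\cite{casselman-1}, 1.8 Proposition). So you are not diverging from an argument in the paper — you are supplying one, and what you supply is the standard direct verification, with all steps holding up: the PBW reduction to a countable family of seminorms and the completeness argument for (i); for moderate growth, the identity $u.(R(g)f)=R(g)\bigl((\mathrm{Ad}(g^{-1})u).f\bigr)$ combined with $\|yg^{-1}\|_{\Gamma\setminus G}\le\|y\|_{\Gamma\setminus G}\,\|g\|$ and the polynomial bound $\|\mathrm{Ad}(g)\|\le C_0\|g\|^{d_0}$ (which does follow from the Cartan decomposition and Lemma \ref{rmg-1}, or even more quickly from the equivalence of norms in Section \ref{rmg}, since $g\mapsto\max\bigl(\|\mathrm{Ad}(g)\|,\|\mathrm{Ad}(g^{-1})\|\bigr)$ is itself a norm once one takes an $\mathrm{Ad}(K)$--invariant inner product on $\mathfrak g$, the kernel of $\mathrm{Ad}$ being the finite center); and, for smoothness, the continuity of orbit maps at $e$ via the split into $\|x\|_{\Gamma\setminus G}\le T$ (where left $\Gamma$--invariance of the expression lets you replace $x$ by a representative with $\|x\|\le T+1$ and use uniform continuity on a compact set) and $\|x\|_{\Gamma\setminus G}>T$ (your tail estimate, trading one power of the weight for the seminorm $\|f\|_{u,-(n+1)}$, uniformly for $g$ in a compact neighbourhood of $e$), followed by the fundamental-theorem-of-calculus step, which is legitimate because point evaluations are continuous on $\mathcal S\left(\Gamma\backslash G\right)$ so the Fr\'echet--valued Riemann integral can be identified pointwise. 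Two cosmetic remarks: your moderate-growth bound ends with a finite sum of defining seminorms rather than a single one, which is harmless since such a sum is again a continuous seminorm on the space; and in the continuity estimate the term $\|f\|_{(\mathrm{Ad}(g^{-1})-I)u,-n}\to 0$ deserves the one-line justification that $\mathrm{Ad}(g^{-1})$ preserves the finite-dimensional filtration step containing $u$ and depends continuously on $g$ there. In short, your route buys self-containedness — Proposition \ref{src-1} then rests only on the norm axioms of Section \ref{rmg} — at the cost of length, whereas the paper simply outsources the proof to \cite{casselman-1}.
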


\medskip
We recall the definition of representation of moderate growth. Let $(\pi, V)$ be a continuous representation
on the Fr\' echet space   $V$. We say that $(\pi, V)$ is of moderate growth if its is smooth, and 
if for any semi-norm $\rho$ there exists an integer
$n$, a constant $C>0$, and another semi-norm $\nu$ such that 
$$
||\pi(g)v||_\rho\le C ||g||^n ||v||_\nu, \ \ g\in G, \ v\in V.
$$
We recall that the semi-norms on a locally convex vector space (for example, a Frech\' et space) $V$ are
constructed via Minkowski functionals.

\vskip .2in
The following definition is from (\cite{casselman-1}, page 295).

\begin{Def}\label{src-2} The space $\mathcal S\left(\Gamma\backslash G\right)'$ of tempered distributions or
  distributions of moderate growth  on $\Gamma \backslash G$ is the
  strong topological dual of $\mathcal S\left(\Gamma\backslash G\right)$. 
  \end{Def}

\medskip

For convenience of the reader,  we recall the definition of a strong topological dual in our particular case.
By general theory, the subset  $B\subset \mathcal S\left(\Gamma\backslash G\right)$ is bounded if for every neighborhood $V$ of
$0$ there exists $s>0$ such that $B\subset tV$, for $t>s$. This definition is not very practical to use.
Again from the general theory (and easy to see directly), $B\subset \mathcal S\left(\Gamma\backslash G\right)$
is bounded if and only if it is bounded in every semi-norm defining topology on $\mathcal S\left(\Gamma\backslash G\right)$
i.e., 
$$
\sup_{f\in B} ||f||_{u, -n} <\infty, \ \ u\in \mathcal U(\mathfrak g_{\mathbb C}), \ \ n\in  \mathbb Z_{\ge 1}.
$$

\medskip
The strong topological dual
$\mathcal S\left(\Gamma\backslash G\right)'$  of $\mathcal S\left(\Gamma\backslash G\right)$ is
the space of continuous functionals on $X$ equipped with strong topology i.e. topology of uniform convergence on bounded sets
in $\mathcal S\left(\Gamma\backslash G\right)$ i.e. topology given by semi--norms
$$
||\alpha||_B=\sup_{f\in B} \ \left|\alpha(f)\right|, \ \ \text{where $B$ ranges over bounded sets of
  $\mathcal S\left(\Gamma\backslash G\right)$}.
$$

\vskip.2in

By general theory of topological vector spaces, the space $\mathcal S\left(\Gamma\backslash G\right)'$ is complete
locally convex  (defined by above semi-norms) vector space. The natural action of $G$ on
$\mathcal S\left(\Gamma\backslash G\right)'$ is continuous. The usual representation--theoretic argument are valid
there (\cite{hc}, Section 2). 

\vskip .2in
The following lemma can be used to deal with the limits in $\mathcal S\left(\Gamma\backslash G\right)'$
but of course it is
not sufficient to deal with the topology on $\mathcal S\left(\Gamma\backslash G\right)'$. The proof is left as
an exercise to the reader.

\begin{Lem}\label{src-1000} Let $\alpha_n$, $n\ge 1$, be a sequence in $\mathcal S\left(\Gamma\backslash G\right)'$ and let
  $\alpha \in \mathcal S\left(\Gamma\backslash G\right)'$. Then, $\alpha_n\longrightarrow \alpha$ if for sufficiently large
  numbers
  $M>0$ and  $m\in \mathbb Z_{\ge 1}$ we have
  $$
  \lim_{n} \ \sup_{||f||_{1, -m}\le M} \ \left|\alpha_n(f)-\alpha(f)\right|\longrightarrow 0.
  $$
  \end{Lem}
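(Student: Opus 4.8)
The statement to prove is Lemma \ref{src-1000}: a sufficient condition for convergence $\alpha_n \to \alpha$ in the strong dual $\mathcal S(\Gamma\backslash G)'$. The plan is to reduce convergence in the strong topology — which is controlled by the seminorms $\|\alpha\|_B$ as $B$ ranges over \emph{all} bounded subsets of the Schwartz space — to a single seminorm involving only the bounded sets of the form $\{f : \|f\|_{1,-m} \le M\}$. The key structural fact we would exploit is that $\mathcal S(\Gamma\backslash G)$ is a Fr\'echet space (Proposition \ref{src-1}(i)), hence a barrelled and even Montel-type space in the cases at hand; more elementarily, in any metrizable locally convex space every bounded set is contained in a bounded set of a particularly convenient shape once we understand which seminorms dominate which.

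\emph{Step 1.} First I would recall that convergence $\alpha_n \to \alpha$ in $\mathcal S(\Gamma\backslash G)'$ means $\sup_{f\in B}|\alpha_n(f) - \alpha(f)| \to 0$ for every bounded set $B$. So it suffices to show: for every bounded $B$ there exist $M>0$ and $m\in\mathbb Z_{\ge 1}$ with $B \subseteq \{f : \|f\|_{1,-m}\le M\}$, \emph{provided} we also know that the hypothesis controls not just the box cut out by the seminorm $\|\cdot\|_{1,-m}$ but genuinely all bounded sets. The subtlety is that a set bounded in the single seminorm $\|\cdot\|_{1,-m}$ is typically \emph{not} bounded in $\mathcal S(\Gamma\backslash G)$ (boundedness there requires control in \emph{all} seminorms $\|\cdot\|_{u,-n}$). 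So the inclusion goes the wrong way, and the honest content of the lemma is different from a naive reduction.

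\emph{Step 2 (the actual mechanism).} The correct reading: the functionals $\alpha_n - \alpha$ being uniformly bounded on the big box $\{\|f\|_{1,-m}\le M\}$ says that, after possibly enlarging $m$ and $M$, each $\alpha_n-\alpha$ is continuous with respect to the \emph{single} seminorm $\|\cdot\|_{1,-m}$, with a uniform bound; equivalently they factor through the normed quotient $\mathcal S(\Gamma\backslash G)/\ker\|\cdot\|_{1,-m}$ with uniformly bounded norm, and this norm tends to $0$. Then for \emph{any} bounded $B\subseteq \mathcal S(\Gamma\backslash G)$ we have in particular $\sup_{f\in B}\|f\|_{1,-m} =: M_B < \infty$, so $B$ sits inside the dilate $(M_B/M)\cdot\{\|f\|_{1,-m}\le M\}$, and hence $\sup_{f\in B}|\alpha_n(f)-\alpha(f)| \le (M_B/M)\,\sup_{\|f\|_{1,-m}\le M}|\alpha_n(f)-\alpha(f)| \to 0$. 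This is the whole argument, and it is why the lemma is "left as an exercise": it is a one-line homogeneity-plus-boundedness manipulation once the factoring interpretation is in place.

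\emph{Step 3 and the main obstacle.} The one genuine point to nail down is the phrase "for sufficiently large $M$ and $m$'': one must check that the pair $(M,m)$ can be chosen \emph{uniformly in $n$}, i.e. that the hypothesis is that a \emph{single} $(M,m)$ works for the tail of the sequence, not a different pair for each $n$. Granting that (which is what the statement intends), the estimate above goes through verbatim, using only: (a) the description of bounded sets in $\mathcal S(\Gamma\backslash G)$ as those bounded in every seminorm $\|\cdot\|_{u,-n}$, recalled in the text just before the lemma; (b) positive homogeneity of each seminorm and of the functionals; and (c) $\|x\|_{\Gamma\backslash G}\ge 1$, which gives the monotonicity $\|f\|_{u,-n'}\le\|f\|_{u,-n}$ for $n'>n$ already noted, allowing $m$ to be increased freely. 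The expected obstacle is thus purely one of \emph{bookkeeping of quantifiers} rather than analysis; no new hard input beyond the Fr\'echet structure and the explicit seminorm description is needed, and I would write the proof in three or four lines along exactly these lines.
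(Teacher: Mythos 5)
Your argument is correct: since every bounded set $B\subset\mathcal S\left(\Gamma\backslash G\right)$ satisfies $\sup_{f\in B}\|f\|_{1,-m}<\infty$, linearity and homogeneity show $B$ is absorbed by $\{f:\|f\|_{1,-m}\le M\}$, so $\sup_{f\in B}|\alpha_n(f)-\alpha(f)|\le \max(M_B/M,1)\,\sup_{\|f\|_{1,-m}\le M}|\alpha_n(f)-\alpha(f)|\to 0$; the paper leaves this lemma as an exercise, and this absorption argument is exactly the intended one. Your quantifier concern in Step 3 is harmless, since the hypothesis for any single pair $(M,m)$ already suffices.
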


\vskip .2in
Following Casselman, we consider the two spaces of functions: the functions of moderate growth
$\mathcal A_{mg}(\Gamma \backslash G)$, and
the functions of uniform moderate growth $\mathcal A_{umg}(\Gamma \backslash G)$. The space
$\mathcal A_{mg}(\Gamma \backslash G)$ consists of the functions $f\in C^\infty(G)$ satisfying the following conditions:

\vskip .2in

\begin{itemize}
\item[(MG-1)] $f$ is left--invariant under $\Gamma$ i.e., $f(\gamma x)=f(x)$ for all
  $\gamma\in \Gamma$, $x\in G$;
\item[(MG-2)] for each $u\in \mathcal U(\mathfrak g_{\mathbb C})$ there exists a constant
  $C_u>0$, $r_u\in\mathbb R$, $r_u>0$  such that $\left|u.f(x)\right|\le C_u \cdot ||x||^{r_u}$, for all $x\in G$.
\end{itemize}

\vskip .2in 
The space $\mathcal A_{umg}(\Gamma \backslash G)$ consists
of the functions $f\in C^\infty(G)$ satisfying the following conditions:

\vskip .2in 
\begin{itemize}
\item[(UMG-1)] $f$ is left--invariant under $\Gamma$ i.e., $f(\gamma x)=f(x)$ for all
  $\gamma\in \Gamma$, $x\in G$;
\item[(UMG-2)] there exists $r\in\mathbb R$, $r>0$
  such that for each $u\in \mathcal U(\mathfrak g_{\mathbb C})$ there exists a constant
  $C_u>0$ such that $\left|u.f(x)\right|\le C_u \cdot ||x||^r$, for all $x\in G$.
\end{itemize}
We note that in the second definition $r$ is independent of $u\in \mathcal U(\mathfrak g_{\mathbb C})$.

\medskip
\begin{Lem}\label{src-3} We maintain the assumptions of the first paragraph of Section \ref{paf}.
  Then, the spaces of functions which are $\cal Z(\mathfrak g_{\mathbb C})$--finite and $K$--finite on the right in
  $\mathcal A_{mg}(\Gamma \backslash G)$, and in $\mathcal A_{umg}(\Gamma \backslash G)$ coincide,
  and are equal to the  space $\mathcal A(\Gamma\backslash G)$ of automorphic forms for $\Gamma$. Next,
  the space of smooth automorphic forms $\mathcal A^\infty(\Gamma\backslash G)$ is a subspace of
  $\cal Z(\mathfrak g_{\mathbb C})$--finite functions in $\mathcal A_{umg}(\Gamma \backslash G)$.
  Furthermore, we have
  $$
  \mathcal A(\Gamma\backslash G)\subset \mathcal A^\infty(\Gamma\backslash G)\subset
  \mathcal A_{umg}(\Gamma \backslash G)\subset \mathcal A_{mg}(\Gamma \backslash G).
  $$
  \end{Lem}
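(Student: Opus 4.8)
The plan is to prove the chain of inclusions from right to left, then identify the $\mathcal{Z}(\mathfrak g_{\mathbb C})$-finite, $K$-finite functions in the two moderate-growth spaces with $\mathcal A(\Gamma\backslash G)$.

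\medskip

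\emph{Step 1: The inclusion $\mathcal A_{umg}(\Gamma\backslash G)\subset\mathcal A_{mg}(\Gamma\backslash G)$.} This is immediate from the definitions: a function satisfying (UMG-1), (UMG-2) satisfies (MG-1), (MG-2) with $r_u=r$ for every $u$. No work is needed here.

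\medskip

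\emph{Step 2: The inclusion $\mathcal A^\infty(\Gamma\backslash G)\subset\mathcal A_{umg}(\Gamma\backslash G)$.} A smooth automorphic form satisfies (A-2), which is exactly (UMG-1), and satisfies (A-3), which is exactly (UMG-2). Again immediate. Similarly $\mathcal A(\Gamma\backslash G)\subset\mathcal A^\infty(\Gamma\backslash G)$ holds since a $K$-finite automorphic form is by definition a smooth automorphic form; this was already noted in Section \ref{paf}. So the displayed chain of four inclusions is essentially a matter of unwinding definitions once Steps 1 and 2 are in place.

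\medskip

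\emph{Step 3: Coincidence of the $\mathcal{Z}(\mathfrak g_{\mathbb C})$-finite, $K$-finite subspaces.} Let $f\in\mathcal A_{mg}(\Gamma\backslash G)$ be $\mathcal{Z}(\mathfrak g_{\mathbb C})$-finite and $K$-finite on the right. I must show $f\in\mathcal A(\Gamma\backslash G)$, i.e. that the non-uniform bound (MG-2) can be upgraded to the uniform bound (A-3)$=$(UMG-2). The tool is exactly Lemma \ref{paf-10}: its hypothesis is (A-1), (A-2), and the existence of constants $r,C>0$ with $|f(x)|\le C\|x\|^r$ — which is precisely (MG-2) applied to $u=1$ — and its conclusion is (A-3). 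Hence $f$ satisfies (A-1)--(A-3) and is an automorphic form. Conversely any $f\in\mathcal A(\Gamma\backslash G)$ is $\mathcal{Z}(\mathfrak g_{\mathbb C})$-finite, $K$-finite, and satisfies (A-3), hence lies in $\mathcal A_{umg}(\Gamma\backslash G)\subset\mathcal A_{mg}(\Gamma\backslash G)$; and it is trivially in the $\mathcal{Z}(\mathfrak g_{\mathbb C})$-finite, $K$-finite subspace of each. This gives equality of all three spaces. Finally, $\mathcal A^\infty(\Gamma\backslash G)\subset\mathcal A_{umg}(\Gamma\backslash G)$ by Step 2, and the $\mathcal{Z}(\mathfrak g_{\mathbb C})$-finiteness statement for $\mathcal A^\infty$ is part of its definition (condition (A-1) minus $K$-finiteness still includes $\mathcal{Z}(\mathfrak g_{\mathbb C})$-finiteness).

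\medskip

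The only substantive point is Step 3, and even there the real content has been outsourced to Lemma \ref{paf-10}, whose proof (via Harish--Chandra's theorem $f=f\star\alpha$ with $\alpha\in C_c^\infty(G)$, followed by the submultiplicativity of the norm) already does the genuine analytic work of propagating a growth bound on $f$ to all its derivatives. So I expect no real obstacle; the lemma is essentially a bookkeeping consequence of the definitions together with Lemma \ref{paf-10}. The one place to be slightly careful is to check that membership in $\mathcal A_{mg}(\Gamma\backslash G)$ already supplies smoothness and left $\Gamma$-invariance, so that the hypotheses of Lemma \ref{paf-10} are literally met — but this is immediate from (MG-1) and the requirement $f\in C^\infty(G)$ built into the definition.
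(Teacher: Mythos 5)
Your proposal is correct and follows the same route as the paper: the paper's proof is exactly the observation that the chain of inclusions is definitional and that the nontrivial upgrade from (MG-2) to (A-3) for $\cal Z(\mathfrak g_{\mathbb C})$--finite, $K$--finite functions is supplied by Lemma \ref{paf-10}. Your write-up merely spells out the bookkeeping that the paper leaves implicit.
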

\begin{proof} The first and second claims  follow from (A1)--(A3), and Lemma \ref{paf-10}. The third claim is obvious.
\end{proof}

\vskip .2in

\begin{Lem}\label{src-4} The Garding space in   $\mathcal S\left(\Gamma\backslash G\right)'$ is equal to the space
  $\mathcal A_{umg}(\Gamma \backslash G)$.
  \end{Lem}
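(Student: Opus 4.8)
The plan is to identify the Gårding (= Gårding = Garding) subspace of the $G$-module $\mathcal S(\Gamma\backslash G)'$ with $\mathcal A_{umg}(\Gamma\backslash G)$ by a two-sided inclusion. Recall that the Gårding space of a continuous $G$-representation $(\pi,V)$ on a complete locally convex space is the span of all vectors $\pi(\alpha)v=\int_G\alpha(g)\pi(g)v\,dg$ with $\alpha\in C_c^\infty(G)$ and $v\in V$; by Dixmier–Malcev this coincides with the space $V^\infty$ of smooth vectors whenever $V$ is Fréchet, but here $V=\mathcal S(\Gamma\backslash G)'$ is only an $LF$-type space, so one works with the Gårding span directly. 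I would first pin down what such a smoothing looks like concretely: for a distribution $T\in\mathcal S(\Gamma\backslash G)'$ and $\alpha\in C_c^\infty(G)$, the vector $\pi(\alpha)T$ is represented by a genuine function on $\Gamma\backslash G$, namely $g\mapsto T(R_g\,\check\alpha)$ where $\check\alpha(x)=\alpha(x^{-1})$ and $R_g$ is right translation — this is the standard fact that convolving a distribution with a test function produces a smooth function. So the first step is: \emph{every Gårding vector in $\mathcal S(\Gamma\backslash G)'$ is (represented by) a function in $C^\infty(G)$, left-$\Gamma$-invariant}.

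Second step: show $\pi(\alpha)T\in\mathcal A_{umg}(\Gamma\backslash G)$, i.e. verify (UMG-2). Here I would use that $\mathcal S(\Gamma\backslash G)$ is a smooth Fréchet representation of moderate growth (Proposition \ref{src-1}(ii)), so there is a fixed integer $n_0$, a constant $C$, and seminorms controlling $\|R_g f\|_{u,-n}$ by $C\|g\|^{n_0}\|f\|_{\nu}$. Pairing against the continuous functional $T$ and differentiating under the convolution (so that $u.(\pi(\alpha)T)(g)=T(R_g\widecheck{(u\alpha)})$ for $u\in\mathcal U(\mathfrak g_{\mathbb C})$, using that $T$ is fixed once and for all) gives a bound $|u.(\pi(\alpha)T)(g)|\le C_u\|g\|^{n_0}$ with the \emph{same} exponent $n_0$ for all $u$ — which is exactly the uniform-moderate-growth condition; the $u$-dependence is absorbed into $C_u$ via the support and derivatives of $\alpha$. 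Combined with step one this gives $\pi(\alpha)T\in\mathcal A_{umg}(\Gamma\backslash G)$, hence (taking spans) the Gårding space is contained in $\mathcal A_{umg}(\Gamma\backslash G)$.

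Third step, the reverse inclusion: every $f\in\mathcal A_{umg}(\Gamma\backslash G)$ is a Gårding vector. The point is to observe that $f$, via $\varphi\mapsto\int_{\Gamma\backslash G}f\varphi\,dg$, defines a tempered distribution: by (UMG-2) with exponent $r$ one has $|f(g)|\le C\|g\|^r\le C\|g\|_{\Gamma\backslash G}^{r}$ only after descending correctly, so I would instead estimate $|\int f\varphi|$ by $\|\varphi\|_{1,-n}$ for $n$ large, using Lemma \ref{rmg-2} to make $\int_G\|g\|_{\Gamma\backslash G}^{r-n}\,dg<\infty$; this lands $f$ in $\mathcal S(\Gamma\backslash G)'$. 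Then one must produce $\alpha\in C_c^\infty(G)$ and $T$ with $\pi(\alpha)T=f$. Since $f$ is annihilated by an ideal of finite codimension in $\mathcal Z(\mathfrak g_{\mathbb C})$ — wait, UMG functions need not be $\mathcal Z$-finite, so that trick is unavailable in general; instead I would invoke the general principle that in a smooth representation the Gårding space equals the space of smooth vectors, and argue that $f$ is a smooth vector of $\mathcal S(\Gamma\backslash G)'$: the orbit map $g\mapsto R_g f$ is smooth because all derivatives $u.f$ again lie in $\mathcal A_{umg}\subset\mathcal S(\Gamma\backslash G)'$ and the relevant difference quotients converge in the strong dual topology (checked via Lemma \ref{src-1000}, using the uniform exponent $r$). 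By Casselman's own results (\cite{casselman-1}) the smooth vectors of $\mathcal S(\Gamma\backslash G)'$ form the Gårding space, giving $f\in$ Gårding space.

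\medskip
The main obstacle I anticipate is the reverse inclusion, specifically the passage from ``$f$ is a smooth vector'' to ``$f$ is a finite sum of $\pi(\alpha_i)T_i$'': the Dixmier–Malcev smoothing argument requires either metrizability of $V$ or a substitute, and $\mathcal S(\Gamma\backslash G)'$ is a strong dual of a Fréchet space, hence typically a non-metrizable $(DF)$-space, so one must either cite Casselman's explicit identification of the Gårding space of $\mathcal S(\Gamma\backslash G)'$, or reprove smoothing in this setting using an approximate identity $\alpha_\varepsilon$ and the moderate-growth estimates of Proposition \ref{src-1}(ii) to control convergence $\pi(\alpha_\varepsilon)T\to T$ uniformly on bounded subsets of $\mathcal S(\Gamma\backslash G)$. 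The cleanest route in a paper that already leans on \cite{casselman-1} is simply to quote Casselman's theorem that $\mathcal A_{umg}(\Gamma\backslash G)$ \emph{is} the Gårding space and present steps one and two as the verification/unwinding of the definitions; that is likely what the author intends here.
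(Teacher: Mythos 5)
The paper's entire proof of Lemma \ref{src-4} is a citation: it is exactly Casselman's Theorem 1.16 in \cite{casselman-1}. Your closing remark — that the cleanest route is simply to quote Casselman's theorem — is therefore precisely what the author does, and to that extent your proposal lands on the intended argument.

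However, your attempted standalone proof does not close, and the gap is exactly where you sense it. In the reverse inclusion you fall back on ``the general principle that in a smooth representation the Gårding space equals the space of smooth vectors'' and then assert that Casselman proves the smooth vectors of $\mathcal S\left(\Gamma\backslash G\right)'$ form the Gårding space. Neither is available: Dixmier--Malliavin requires a Fr\'echet (or at least suitably metrizable) space, and $\mathcal S\left(\Gamma\backslash G\right)'$ is a strong dual of a Fr\'echet space, not Fr\'echet; the paper makes this point explicitly in the remark immediately after Lemma \ref{src-4}, where it stresses that one may \emph{not} identify the Gårding space with the smooth vectors here, and that $\mathcal A^\infty(\Gamma\backslash G)$ is a priori only a subspace of the $\mathcal Z(\mathfrak g_{\mathbb C})$--finite vectors of $\mathcal S\left(\Gamma\backslash G\right)'$. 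Casselman's Theorem 1.16 identifies the Gårding space (not the smooth vectors) with $\mathcal A_{umg}(\Gamma\backslash G)$, and the reverse inclusion is the substantive content of that theorem — showing that a smooth vector, or a function of uniform moderate growth, can actually be written as a finite sum $\sum_i \pi(\alpha_i)T_i$ requires Casselman's own machinery (a Dixmier--Malliavin--type factorization adapted to this dual setting), which your sketch does not supply. A smaller technical slip in the forward inclusion: for $T\in\mathcal S\left(\Gamma\backslash G\right)'$ the representative of $\pi(\alpha)T$ cannot be written as $g\mapsto T(R_g\check\alpha)$, since $R_g\check\alpha\in C_c^\infty(G)$ is not left $\Gamma$--invariant and hence not an element of $\mathcal S\left(\Gamma\backslash G\right)$; one must first average over $\Gamma$ (compare the computation in the proof of Lemma \ref{src-5}, where the action is expressed as $r'(\alpha)\Lambda(\varphi)=\Lambda(\varphi\star\alpha)$ with $\varphi$ already in the Schwartz space). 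So your step one and step two are repairable, but the reverse inclusion genuinely needs the citation, and the honest conclusion is the one the paper draws: quote \cite{casselman-1}, Theorem 1.16.
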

\begin{proof} This (\cite{casselman-1}, Theorem 1.16).
\end{proof}

\vskip .2in
We remark that $\mathcal S\left(\Gamma\backslash G\right)'$ is not a Fr\' echet space  so \cite{dixmal} can not be applied
to prove that the space of smooth vectors is the same as the Garding space. Therefore, for example, in the settings of
Lemma \ref{src-3},
$\mathcal A^\infty(\Gamma\backslash G)$ is just subspace of the space of all $\cal Z(\mathfrak g_{\mathbb C})$--finite vectors in
$\mathcal S\left(\Gamma\backslash G\right)'$.

\vskip .2in
Regarding smooth vectors in $\mathcal S\left(\Gamma\backslash G\right)'$, the following remarkable
lemma will be used later:

\medskip 
\begin{Lem}\label{src-5} Assume that $f\in L^p(\Gamma\setminus G)$, for some $p\ge 1$, and $\alpha\in C_c^\infty(G)$. Then,
  $f\star \alpha$ is equal almost everywhere to a function in  $\mathcal A_{umg}(\Gamma \backslash G)$.
\end{Lem}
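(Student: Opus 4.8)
The plan is to show directly that $f\star\alpha$ is smooth, left $\Gamma$-invariant, and satisfies the uniform moderate growth bound (UMG-2), after adjusting $f\star\alpha$ on a null set. First I would recall that for $f\in L^p(\Gamma\backslash G)$ (viewed as a function on $G$ that is left $\Gamma$-invariant) and $\alpha\in C_c^\infty(G)$, the convolution $(f\star\alpha)(x)=\int_G f(y)\,\alpha(y^{-1}x)\,dy=\int_G f(xy^{-1})\,\alpha(y)\,dy$ is defined for every $x$, since on the support of $\alpha$ the integrand is dominated by $|f|$ times a bounded function on a fixed compact set, and $f$ is locally $L^p\subset L^1_{loc}$. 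Differentiating under the integral sign (justified because $\alpha\in C_c^\infty(G)$ has compact support, so the $x$-dependence is through $\alpha(y^{-1}x)$ only and all derivatives are again compactly supported and bounded) shows $f\star\alpha\in C^\infty(G)$ and, for $u\in\mathcal U(\mathfrak g_{\mathbb C})$ acting on the right,
$$
u.(f\star\alpha)(x)=\int_G f(y)\,(u.\alpha)(y^{-1}x)\,dy=(f\star(u.\alpha))(x).
$$
Left $\Gamma$-invariance of $f\star\alpha$ is immediate from left $\Gamma$-invariance of $f$ together with left-invariance of Haar measure in the first formula: replacing $y$ by $\gamma y$ fixes nothing, but writing $(f\star\alpha)(\gamma x)=\int f(\gamma x y^{-1})\alpha(y)dy=\int f(xy^{-1})\alpha(y)dy$ uses $f(\gamma z)=f(z)$.

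Next comes the growth estimate, which is the main point. I would estimate $|u.(f\star\alpha)(x)|=|(f\star\beta)(x)|$ with $\beta:=u.\alpha\in C_c^\infty(G)$ by H\"older's inequality applied on $\Gamma\backslash G$. Let $q$ be conjugate to $p$. Then
$$
|(f\star\beta)(x)|\le\int_G|f(y)|\,|\beta(y^{-1}x)|\,dy
=\int_{\Gamma\backslash G}|f(y)|\sum_{\gamma\in\Gamma}|\beta(y^{-1}\gamma^{-1}x)|\,dy
\le\|f\|_{L^p}\cdot\Big(\int_{\Gamma\backslash G}\big(\textstyle\sum_{\gamma}|\beta(y^{-1}\gamma^{-1}x)|\big)^q dy\Big)^{1/q}.
$$
Because $\beta$ has fixed compact support and $\Gamma$ is discrete, the inner sum over $\gamma$ has a number of nonzero terms bounded uniformly in $y$ and $x$ (standard properties of the norm on $G$ and discreteness of $\Gamma$: $\gamma$ ranges over $\{\gamma: y^{-1}\gamma^{-1}x\in\mathrm{supp}\,\beta\}$, a set of bounded cardinality), so the $q$-th power of the sum is $\le$ const $\cdot\sum_\gamma|\beta(y^{-1}\gamma^{-1}x)|^q$, whence the integral over $\Gamma\backslash G$ unfolds to const $\cdot\int_G|\beta(y^{-1}x)|^q dy=$ const $\cdot\|\beta\|_{L^q}^q$, a finite constant independent of $x$. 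This already gives a \emph{uniform bound} $|u.(f\star\alpha)(x)|\le C_u$; in particular (UMG-2) holds with any $r>0$ and the same $r$ works for every $u$.

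The remaining obstacle is purely measure-theoretic: we must identify $f\star\alpha$ — which as written is defined pointwise for every $x$ — with the $L^p$-convolution that a priori is only defined up to a null set. This is routine: the map $x\mapsto(f\star\alpha)(x)$ defined by the integral is continuous (dominated convergence, using local integrability of $f$ and uniform continuity of $\alpha$), and it agrees a.e. with the $L^p$-class convolution by Fubini; so the continuous representative we have constructed is the desired element of $\mathcal A_{umg}(\Gamma\backslash G)$. Putting the three pieces together — smoothness, left $\Gamma$-invariance, and the uniform estimate valid for all $u\in\mathcal U(\mathfrak g_{\mathbb C})$ with a common exponent — we conclude $f\star\alpha\in\mathcal A_{umg}(\Gamma\backslash G)$ as claimed. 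I expect the delicate step to be making the ``bounded number of $\gamma$'' claim fully rigorous via the norm properties (1)–(4) of Section \ref{rmg}, but this is standard and can be cited from the discreteness of $\Gamma$ together with property (3) of the norm.
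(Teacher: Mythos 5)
Your smoothness, $\Gamma$-invariance, and a.e.-identification steps are fine, but the growth estimate — the heart of the lemma — rests on a false claim. The number of nonzero terms in $\sum_{\gamma\in\Gamma}|\beta(y^{-1}\gamma^{-1}x)|$ is \emph{not} uniformly bounded in $x$ and $y$ unless $\Gamma\backslash G$ is compact. The nonvanishing condition is $\gamma\in xS^{-1}y^{-1}$ with $S=\supp\beta$, a \emph{two-sided} translate of a fixed compact set: two such $\gamma$ differ by an element of $\Gamma\cap y\,SS^{-1}y^{-1}$, and the cardinality of $\Gamma$ intersected with a \emph{conjugate} $y\,SS^{-1}y^{-1}$ blows up as $y$ moves into a cusp (for $G=SL_2(\mathbb R)$, $\Gamma=SL_2(\mathbb Z)$, $y=\mathrm{diag}(t,t^{-1})$, the unipotent elements $\begin{pmatrix}1&n\\0&1\end{pmatrix}$ with $|n|\lesssim t^2$ all occur). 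Discreteness of $\Gamma$ plus property (3) of the norm only bound $\#(\Gamma\cap gK)$ for one-sided translates of a fixed compact $K$, which is why your ``standard'' citation does not apply. Accordingly, your conclusion that $|u.(f\star\alpha)|\le C_u$ uniformly on $G$ is also false in the non-cocompact case: for $\Gamma=SL_2(\mathbb Z)$ take $f$ depending only on the Iwasawa coordinate $y$, say $f\sim y^{3/4}$, which lies in $L^1(\Gamma\backslash G)$; then $f\star\alpha$ grows like $y^{3/4}$ in the cusp. The lemma only asserts uniform \emph{moderate} growth, and that is all one can get.

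The argument can be repaired, but it then needs the standard (nontrivial) counting estimate $\#\{\gamma\in\Gamma:\ y^{-1}\gamma^{-1}x\in S\}\le C\,\|x\|_{\Gamma\backslash G}^{N}$ (uniformly in $y$, with $C,N$ depending only on $S$), after which your H\"older/unfolding computation gives $|u.(f\star\alpha)(x)|\le C_u\|x\|_{\Gamma\backslash G}^{r}$ with $r$ independent of $u$ (note $\supp(u.\alpha)\subseteq\supp\alpha$, so the same $S$ serves for all $u$). This would be a genuinely different, bare-hands route from the paper's proof, which avoids all counting: the paper views integration against $f$ as an element of $\mathcal S(\Gamma\backslash G)'$, observes that integration against $f\star\alpha$ is $r'(\alpha)$ applied to it, hence lies in the Garding space, and invokes Casselman's theorem (Lemma \ref{src-4}) identifying that Garding space with $\mathcal A_{umg}(\Gamma\backslash G)$. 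As written, however, your proposal has a genuine gap at its key estimate.
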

\begin{proof} It follows easily from above description of topology of $\cal S(\Gamma\backslash G)$ that
  $\varphi\longmapsto \int_{\Gamma\backslash G} F(x) \varphi(x) dx$ belongs to $\cal S(\Gamma\backslash G)'$
  for any  $F\in L^p(\Gamma\setminus G)$.

 The action of convolution algebra  $C_c^\infty(G)$ on $\cal S(\Gamma\backslash G)'$ is given by
  $$
r'(\alpha)\Lambda=  \int_G \alpha(x) \ r'(x)\Lambda \ dx, \ \ \Lambda \in \cal S(\Gamma\backslash G)', \alpha\in C_c^\infty(G),
  $$
where $r'(x)$ is the contragredient of the right translation $r(x)$ for $x\in G$. By definition,
we have the following:

$$
r'(\alpha)\Lambda(\varphi)=  \int_G \alpha(x) \ r'(x)\Lambda(\varphi) \ dx= \int_G \alpha(x)
\Lambda(r(x^{-1})\varphi) dx=
\Lambda\left(r(\alpha^\vee)\varphi\right).
$$

We remark that
$$
r(\alpha^\vee)\varphi (x) =\int_G \alpha(y^{-1})\varphi(xy) dy=\varphi\star \alpha(x).
$$

Now, let the functional $\Lambda$ be the functional given by the integration againts $f$ and $\Sigma$ be the functional
given by the integration against $f\star  \alpha$. Then we have the following computation:

\begin{align*}
  \Sigma(\varphi) &=\int_{\Gamma\backslash G} \left(f\star \alpha\right)(x) \varphi(x) dx\\
  &=\int_{\Gamma\backslash G} \left(\int_G f(xy^{-1}) \alpha(y) dy\right)\  \varphi(x) dx\\
  &=\int_{\Gamma\backslash G}f(x)  \left(\int_G \varphi(xy) \alpha(y) dy\right) dx\\
  &=\int_{\Gamma\backslash G}f(x) \varphi\star \alpha(x)  dx\\
  &=\Lambda\left(r(\alpha)\varphi\right)\\
  &=r'(\alpha)\Lambda\left(\varphi\right), \ \ \varphi \in \cal S(\Gamma\backslash G). 
  \end{align*}

Thus, $\Sigma$ belongs to the Garding space of $\cal S(\Gamma\backslash G)'$. Hence, by Lemma \ref{src-4}, there
exists
a $F\in \mathcal A_{mg}(\Gamma \backslash G)$ such that
$$
\int_{\Gamma\backslash G} \left(f\star \alpha\right)(x) \varphi(x) dx= \Sigma (\varphi)=
\int_{\Gamma\backslash G} F(x) \varphi(x) dx,
$$
for all $\varphi \in \cal S(\Gamma\backslash G)$. Since $C_c^\infty(\Gamma\backslash G)\subset
\mathcal S(\Gamma\backslash G)$,
the claim follows. 
\end{proof}

\vskip .2in
This result can be used to give a new and simple proof of important result stated in Lemma \ref{paf-1} (a).

\begin{Prop} \label{src-6}  If  $f\in C^\infty(G)$ satisfies (A-1), (A-2) of Section \ref{paf}, and
  there exists  $p\ge 1$ such that $f\in L^p(\Gamma\backslash G)$, then f satisfies (A-3) of Section \ref{paf},
  and it is therefore an automorphic form in $\mathcal A(\Gamma\backslash G)$.
\end{Prop}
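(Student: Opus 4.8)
The plan is to deduce the growth condition (A-3) from Lemma \ref{src-5} by the usual smoothing trick, i.e.\ by writing $f$ as a convolution $f\star\alpha$ with $\alpha\in C_c^\infty(G)$ and recognizing the right-hand side as (a.e.\ equal to) a function of uniform moderate growth.

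First I would use (A-1): since $f$ is $\mathcal Z(\mathfrak g_{\mathbb C})$--finite and $K$--finite on the right, the theorem of Harish--Chandra (\cite{hc}, Theorem 1) — exactly as invoked in the proof of Lemma \ref{paf-10} — produces $\alpha\in C_c^\infty(G)$ with $f=f\star\alpha$. Next, by (A-2) the function $f$ is left $\Gamma$--invariant, and by hypothesis $f\in L^p(\Gamma\backslash G)$ for some $p\ge 1$; hence Lemma \ref{src-5} applies and yields a function $F\in\mathcal A_{umg}(\Gamma\backslash G)$ with $f\star\alpha=F$ almost everywhere on $G$. Now both sides are continuous: $f=f\star\alpha\in C^\infty(G)$ by assumption on $f$, and $F\in C^\infty(G)$ since $\mathcal A_{umg}(\Gamma\backslash G)\subset C^\infty(G)$ by definition. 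As Haar measure on $G$ has full support, two continuous functions agreeing almost everywhere agree everywhere, so $f=F\in\mathcal A_{umg}(\Gamma\backslash G)$. In particular $f$ satisfies (UMG-2), which is verbatim condition (A-3); together with (A-1) and (A-2) this shows $f\in\mathcal A(\Gamma\backslash G)$. (Equivalently, one may finish by citing Lemma \ref{src-3}: an element of $\mathcal A_{umg}(\Gamma\backslash G)$ that is $\mathcal Z(\mathfrak g_{\mathbb C})$--finite and $K$--finite lies in $\mathcal A(\Gamma\backslash G)$.)

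I do not expect a genuine obstacle here: all the substance is already packaged in Lemma \ref{src-5}, hence ultimately in Casselman's identification (Lemma \ref{src-4}) of the Garding space of $\mathcal S(\Gamma\backslash G)'$ with $\mathcal A_{umg}(\Gamma\backslash G)$. The only points requiring a line of care are the correct invocation of Harish--Chandra's smoothing result and the elementary passage from ``equal almost everywhere'' to ``equal'', using continuity of both functions; neither is delicate.
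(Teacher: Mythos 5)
Your proposal is correct and follows essentially the same route as the paper: invoke Harish--Chandra's theorem via (A-1) to write $f=f\star\alpha$, apply Lemma \ref{src-5} to place $f$ in $\mathcal A_{umg}(\Gamma\backslash G)$, and conclude with Lemma \ref{src-3}. Your extra remark upgrading ``equal almost everywhere'' to ``equal everywhere'' by continuity is a small point the paper leaves implicit, and it is handled correctly.
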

\begin{proof} By (A-1), $f$ is  $\cal Z(\mathfrak g_{\mathbb C})$--finite and $K$--finite on the right.
  Them by  a theorem of Harish--Chandra (\cite{hc}, Theorem 1), there exists $\alpha\in C_c^\infty(G)$ such that
  $f=f\star \alpha$.  Hence, Lemma \ref{src-5} implies that $f\in \mathcal A_{umg}(\Gamma \backslash G)$. Now,
  (A-1) and   Lemma \ref{src-3} complete the proof.
  \end{proof} 

\section{On a Description of Certain Irreducible Subrepresentations in $L^1(G)$}\label{ids}

In this section we  assume that $G$ is a semisimple connected Lie group with finite center. We give a complete  description 
of irreducible closed admissible subrepresentations of $L^1(G)$ under the right translations.  

\medskip

Let $(\pi, \mathcal B)$ be a continuous representation of $G$  on the Banach space $\mathcal B$.
We denote by $\mathcal B^\infty$  the subspace of smooth vectors in 
$\mathcal H$. It is a complete Fr\' echet space under the family of semi--norms:

$$
||b||_u =||d\pi(u)b||, \ \ u\in \mathcal U(\mathfrak g_{\mathbb C}), \ \  b\in \mathcal B^\infty,
$$
where $||\ ||$  is the norm on $\mathcal B$. 
The natural representation $\pi^\infty$ of $\mathcal B^\infty$ is a smooth Fr\' echet representation
of moderate growth (\cite{W2}, Lemma 11.5.1).

 \vskip .2in
Let $\hat{K}$ be the set of
equivalence of irreducible representations of $K$. Let $\delta\in 
\hat{K}$, then we write $d(\delta)$ and $\xi_\delta$ the degree and
character of $\delta$, respectively. We fix the normalized Haar
measure $dk$ on $K$. For $h\in \mathcal H$, we let
$$
E_\delta(b)=\int_{K} d(\delta) \overline{\xi_\delta(k)}\ \pi(k)b\ dk, \ \ \mathcal B.
$$
It belongs to the $\delta$--isotypic component $\mathcal B(\delta)$ of
$\mathcal B$.   We have

\begin{align*}
&E_\delta E_\delta=E_\delta\\
&E_\delta E_\gamma=0 \ \ \text{if} \ \ \delta\not\simeq \gamma.\\
\end{align*}

\vskip .2in
We state the following lemma that we need later:

\begin{Lem} \label{ids-1}
  Let $b\in \mathcal B^\infty$. Then, we have the following:
  \begin{itemize}
  \item[(i)] There exists $b_1, \ldots, b_l\in \mathcal B^\infty$, and  $\alpha_1, \ldots, \alpha_l\in C_c^\infty(G)$ such that
    $$
    b=\sum_{i=1}^l \pi(\alpha_i)b_i.
    $$
  \item[(ii)]  We have the following expansion 
    $$
    b=\sum_{\delta\in \hat{K}} \ E_\delta(b)
    $$
    in above described Fr\' echet topology where the convergence is absolute:
  $$
  \sum_{\delta\in \hat{K}} \ \left|\left|d\pi(u)E_\delta(b)\right|\right|<\infty, \ \ \text{for all } \ u\in \mathcal U(\mathfrak g_{\mathbb C}).
  $$
  \end{itemize}
  \end{Lem}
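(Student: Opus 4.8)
The plan is to treat the two parts in the natural order, using as the only heavy input the fact that $\pi^\infty$ is a smooth Fréchet representation of moderate growth (\cite{W2}, Lemma 11.5.1), so that the Dixmier--Malliavin theorem applies. For part (i): since $\mathcal B^\infty$ is the space of smooth vectors of a smooth Fréchet representation, the Dixmier--Malliavin theorem (\cite{dixmal}) says precisely that $\mathcal B^\infty = \pi(C_c^\infty(G))\cdot \mathcal B^\infty$ and moreover that every smooth vector is a \emph{finite} sum $\sum_{i=1}^l \pi(\alpha_i)b_i$ with $\alpha_i\in C_c^\infty(G)$, $b_i\in\mathcal B^\infty$. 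I would simply invoke this; there is nothing to compute. (One could alternatively use the weaker Gårding-type statement, but Dixmier--Malliavin is what gives the finite sum asserted in the statement, and it is legitimate here because $\mathcal B^\infty$ is Fréchet, unlike $\mathcal S(\Gamma\backslash G)'$ as noted after Lemma \ref{src-4}.)

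For part (ii), I would first establish the absolute-convergence estimate and then deduce the expansion. Fix $u\in\mathcal U(\mathfrak g_{\mathbb C})$. Using part (i), write $b=\sum_{i=1}^l\pi(\alpha_i)b_i$; since $E_\delta$ commutes with the left action of $G$ only up to the usual bookkeeping, the cleaner route is: $E_\delta$ commutes with $d\pi(u)$ is \emph{false} in general, so instead I would first reduce to controlling $\|E_\delta(c)\|$ for a single smooth vector $c$ and then apply this to $c=d\pi(u)b$ after observing $E_\delta(d\pi(u)b)=d\pi(u)E_\delta(b)$ — this identity does hold, because $E_\delta$ is an average over $K$ of left translations... wait, $\pi(k)$ here is the group action, and $d\pi(u)$ for $u$ in the enveloping algebra commutes with $\pi(k)$ only after twisting $u$ by $\mathrm{Ad}(k)$. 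The safe formulation: write $d\pi(u)E_\delta(b)=\int_K d(\delta)\overline{\xi_\delta(k)}\,d\pi(u)\pi(k)b\,dk=\int_K d(\delta)\overline{\xi_\delta(k)}\,\pi(k)\,d\pi(\mathrm{Ad}(k^{-1})u)b\,dk$, and since $\{\mathrm{Ad}(k^{-1})u: k\in K\}$ spans a finite-dimensional subspace of $\mathcal U(\mathfrak g_{\mathbb C})$, with basis $u_1,\dots,u_m$ say, one gets $d\pi(u)E_\delta(b)$ as a finite combination (with coefficients bounded $C^\infty$ functions of $k$ integrated against $\overline{\xi_\delta(k)}$) of expressions $E_\delta^{(j)}(d\pi(u_j)b)$ where $E_\delta^{(j)}$ is an average of $\pi(k)$ against an analytic density. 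Thus it suffices to prove: for any smooth vector $c$, the Fourier coefficients of $k\mapsto \pi(k)c$ (a smooth $\mathcal B$-valued function on $K$) are summable. This is the classical Peter--Weyl / Sobolev argument: $k\mapsto\pi(k)c$ is a smooth function from the compact group $K$ into the Banach space $\mathcal B$; applying a suitable power of $(1-\Omega_K)$ where $\Omega_K$ is the Casimir of $K$ and using that $\|E_\delta\|\le$ something polynomial in the eigenvalue while the eigenvalues grow and the dimensions $d(\delta)$ grow only polynomially, one obtains $\sum_\delta\|E_\delta(c)\|<\infty$. Concretely: $E_\delta(d\pi(\Omega_K^N)c)=\lambda_\delta^N E_\delta(c)$, so $\|E_\delta(c)\|\le \lambda_\delta^{-N}\|E_\delta(d\pi(\Omega_K^N)c)\|\le \lambda_\delta^{-N}\|d\pi(\Omega_K^N)c\|$ (as $\|E_\delta\|\le 1$ for an averaging-type projection when $\pi$ is a Banach representation — or at worst $\le C$ independent of $\delta$), and since the number of $\delta$ with $\lambda_\delta\le t$ grows polynomially in $t$, choosing $N$ large makes $\sum_\delta\lambda_\delta^{-N}<\infty$, giving absolute convergence in the seminorm $\|\cdot\|_u$ for each $u$.

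Finally, absolute convergence of $\sum_\delta E_\delta(b)$ in every defining seminorm means the series converges in the Fréchet space $\mathcal B^\infty$; its limit is a smooth vector whose image under each $E_\gamma$ is $E_\gamma(b)$ (by continuity of $E_\gamma$ and $E_\gamma E_\delta=\delta_{\gamma\delta}E_\delta$), hence the limit equals $b$ because two smooth vectors with the same $K$-isotypic projections coincide (the $K$-finite vectors are dense, or more directly: the difference has all $E_\gamma$ equal to zero, and $E_{\mathrm{triv}}$ applied to... one uses that $\sum_\delta E_\delta=\mathrm{id}$ on the dense subspace of $K$-finite smooth vectors together with the established convergence to pass to the limit). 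I expect the main obstacle to be the bookkeeping in the reduction showing $d\pi(u)E_\delta(b)$ is controlled by finitely many $E_\delta$-type averages of $d\pi(u_j)b$ — i.e. correctly handling the non-commutativity of $d\pi(u)$ with $\pi(k)$ via $\mathrm{Ad}$ — and verifying that the relevant averaging operators remain uniformly bounded in $\delta$; the Peter--Weyl summability itself is standard once that is in place.
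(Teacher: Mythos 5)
Your argument is correct, and part (i) is exactly the paper's proof: both of you invoke Dixmier--Malliavin \cite{dixmal} for the Banach representation $(\pi,\mathcal B)$ to get the finite decomposition $b=\sum_i\pi(\alpha_i)b_i$ (moderate growth is not actually needed for this step). For part (ii) your route differs from the paper's: the paper does not reprove absolute convergence of the $K$--isotypic expansion, but first notes (via \cite{hc}, Lemma 2) that $(\pi^\infty,\mathcal B^\infty)$ is a smooth representation in its natural Fr\'echet topology and then simply quotes Harish--Chandra's expansion result (\cite{hc}, Lemma 5) for that representation. What you spell out --- the $\mathrm{Ad}(k^{-1})u$ twist to handle the non--commutativity of $d\pi(u)$ with the average over $K$, the finite--dimensionality of the span of $\mathrm{Ad}(K)u$, and the Casimir/Sobolev estimate $\|E_\delta(c)\|\le (1+\lambda_\delta)^{-N}\,\|E_\delta\|\,\|(1+d\pi(\Omega_K))^N c\|$ combined with polynomial counting of $\hat{K}$ --- is essentially the standard proof of that cited lemma, so your version is self--contained where the paper's is a citation, at the cost of the bookkeeping you yourself flag. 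One small correction: for a Banach representation $\|E_\delta\|$ is neither $\le 1$ nor bounded uniformly in $\delta$; one only has $\|E_\delta\|\le d(\delta)\,\sup_k|\xi_\delta(k)|\,\sup_{k\in K}\|\pi(k)\|\le C\,d(\delta)^2$, but since $d(\delta)$ grows polynomially in $\lambda_\delta$ (as you note earlier in the same sentence) this is harmless after enlarging $N$. Your concluding step --- that a vector whose $K$--isotypic projections all vanish is zero, via density of the span of matrix coefficients of $K$ in $C(K)$ and an approximate identity --- is also sound and is implicit in the Harish--Chandra lemma the paper uses.
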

  \begin{proof} (i) is of course a well--known result of Dixmier--Malliavin applied to the
    Banach representation $(\pi, \mathcal B)$. By the way, this implies that $(\pi^\infty, \mathcal B^\infty)$ is a smooth
    representation in its natural topology. One just needs to apply (\cite{hc}, Lemma 2). Now, having this remark, 
    (i) is just (\cite{hc}, Lemma 5) applied $(\pi^\infty, \mathcal B^\infty)$. 
  \end{proof}

\vskip .2in
We consider $L^1(G)$ as a Banach representation of $G$ under the right--translations $r$.
Let $\alpha\in C_c^\infty(G)$. It acts on $L^1(G)$ as follows:
$$
r(\alpha)f(x)=\int_G \alpha(y) f(xy)dy=\int_G f(xy^{-1})\alpha^\vee (y)dy= \int_Gf(y)\alpha^\vee (y^{-1}x) dy, \ \
f\in L^1(G).
$$
The function $r(\alpha)f$ belongs to $C^\infty(G)$ and for each $u\in \mathcal U(\mathfrak g_{\mathbb C})$ we have the following:
$$
u.r(\alpha)f(x)= \int_Gf(y)u.\alpha^\vee (y^{-1}x) dy.
$$
By definition, $r(\alpha)f$ belongs to a Garding space of the right--regular representation of $G$ on $L^1(G)$. Thus,
the vector $r(\alpha)f$ is smooth for that representation. Thus, 
$u\in \mathcal U(\mathfrak g_{\mathbb C})$ acts on $r(\alpha)f$. It is easy to see that the action is
described by above formula. For example, if $X\in \mathfrak g$ and $\alpha$ is real--valued, then we compute
\begin{align*}
&   \int_G \left|\frac{r(\alpha)f(x\exp{(tX)})- r(\alpha)f(x)}{t}- \left(f\star X.\alpha^\vee \right)(x)\right| dx\\
  &\le \int_G\int_G \left|f(y)\right| \left|\frac{\alpha^\vee (y^{-1}x\exp{(tX)})- \alpha^\vee (y^{-1}x)}{t}
  -X.\alpha^\vee (y^{-1}x)\right|dxdy\\
  &= \int_G \left|f(y)\right| dy \int_G \left|\frac{\alpha^\vee (x\exp{(tX)})- \alpha^\vee (x)}{t} -X.\alpha^\vee (x)
  \right|dx\\
   &=\int_G \left|f(y)\right| dy \int_G \left|X.\alpha^\vee (x\exp{(t_x X)})-X.\alpha^\vee (x)\right|dx,\\
\end{align*}
for some $t_x\in ]0, \ t[$ by the Mean value theorem. Letting $t\longrightarrow 0$, by the Dominated convergence theorem, we
    obtain the claim. Similar considerations hold for the left regular representation of $G$ on $L^1(G)$ denoted by $l$.    
  Let $\beta\in C_c^\infty(G)$. It acts on $L^1(G)$ as follows:
$$
l(\beta)f(x)=\int_G \beta(y) f(y^{-1}x)dy=\int_G \beta(xy^{-1})f(y)dy=\beta\star f(x).
$$
This function belongs to $C^\infty(G)$ and for each $u\in \mathcal U(\mathfrak g_{\mathbb C})$ we have the following:
 \begin{equation}\label{ids-2000a}
l(u)l(\beta)f = \left(l(u)\beta\right)\star f.
\end{equation}

Let $\delta\in  \hat{K}$. As before, if $dk$ is the normalized measure on $K$, then we let
\begin{align*}
  E^l_\delta(\cdot )&=\int_{K} d(\delta) \overline{\xi_\delta(k)}l(k) \ dk\\
   E^r_\delta(\cdot )&=\int_{K} d(\delta) \overline{\xi_\delta(k)}r(k) \ dk.
\end{align*}
For a finite set $S\subset \hat{K}$, we let
\begin{align*}
  E^l_S &=\sum_{\delta\in S} \ E^l_\delta\\
   E^r_S &=\sum_{\delta\in S} \ E^r_\delta\\
  \end{align*}

\vskip .2in
Let $(\pi, \mathcal H)$ be an irreducible unitary representation of $G$. 
We write $\langle \ , \ \rangle$ for the $G$--invariant inner product on $\mathcal H$.
It is a well--known result of Harish--Chandra that it is admissible.
 Let $\mathcal H_K$ be the space of
 $K$--finite vectors. It is a  $(\mathfrak g, K)$--module, and it is dense in
 $\mathcal H^\infty$ in the Fr\' echet topology.  The space  $\mathcal H^\infty$ is a smooth Fr\' echet representation
of moderate growth (\cite{W2}, Lemma 11.5.1).
 
\vskip .2in
A matrix coefficient of $\pi$ is a function on $G$ of the form $x\longmapsto
 \langle \pi(x)h, \ h'\rangle$, where
 $h, h'\in\mathcal H$. Obviously, $\varphi\neq 0$ if and only if $h, h'\neq 0$.
 The matrix coefficient is $K$--finite on
 the right (resp., on the left and on both sides)  if and only if $h\in \mathcal H_K$
 (resp.,  $h'\in \mathcal H_K$ and
 $h, h'\in \mathcal H_K$). The matix coefficient is smooth if $h, h'\in \mathcal H^\infty$

  \vskip .2in
    From now on we assume that $G$ admits discrete series. By the well--known classification of discrete series due to
    Harish--Chandra, this is the case if and only if $rank(G)=rank(K)$. A unitary representation $(\pi, \mathcal H)$
    is in discrete series if it has a non--zero matrix coefficient which belongs to $L^2(G)$. Due to results of
    Mili\v ci\' c \cite{milicic}, most of discrete series poses a non--zero $K$--finite matrix in  $L^1(G)$. The precise
    description of such representations in terms of Harish--Chandra parameters is contained in \cite{milicic}. We say that
    $(\pi, \mathcal H)$ is integrable if it is in discrete series and it has a non--zero $K$--finite matrix coefficient in 
    $L^1(G)$ (then all $K$--finite matrix coefficients are in $L^1(G)$).  In fact, it is an exercise to prove
    that if $(\pi, \mathcal H)$ has a non--zero $K$--finite matrix coefficient in $L^1(G)$, then this matrix coefficient is in
    $L^2(G)$, and consequently $(\pi, \mathcal H)$ is in the discrete series (see the argument in the proof of Lemma
    \ref{ids-200}).

    \vskip .2in
    The following lemma is important in our investigations below. It is a consequence of deep results of Casselman and Wallach on the
    globalization of $(\mathfrak g, K)$--modules (\cite{W2}, Chapter 11, or \cite{casselman}).

    \begin{Lem} \label{ids-0000} Assume that $(\pi, \mathcal H)$ is representation in the discrete series such that
      it poses a non--zero $K$--finite matrix coefficient which belongs to $L^p(G)$ for some $p\in [1, 2[$.
          Then, all smooth matrix coeffcients belong to
      $L^p(G)$ as well.
    \end{Lem}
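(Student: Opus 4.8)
The plan is to reduce the claim about arbitrary smooth matrix coefficients to the hypothesis about a single $K$--finite one by using the Casselman--Wallach globalization together with the fact that, in the discrete series, all $K$--finite matrix coefficients lie in $L^p(G)$ once one of them does. First I would recall that since $(\pi,\mathcal H)$ is in the discrete series with $rank(G)=rank(K)$, the underlying $(\mathfrak g,K)$--module $\mathcal H_K$ is admissible and finitely generated, so by Casselman--Wallach (\cite{W2}, Chapter 11, or \cite{casselman}) the smooth globalization $\mathcal H^\infty$ is the unique smooth Fréchet representation of moderate growth with that Harish--Chandra module. In particular, by Lemma \ref{ids-1}(i) applied to $(\pi^\infty,\mathcal H^\infty)$, every $h\in\mathcal H^\infty$ can be written as a finite sum $h=\sum_i \pi(\alpha_i)h_i$ with $\alpha_i\in C_c^\infty(G)$ and $h_i\in\mathcal H^\infty$; dualizing, the same holds for the second variable. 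Thus an arbitrary smooth matrix coefficient $x\mapsto\langle\pi(x)h,h'\rangle$ is a finite linear combination of expressions of the form $x\mapsto\langle \pi(\beta^\vee)\pi(x)\pi(\alpha)h_0,h_0'\rangle$, i.e. a finite sum of convolutions $\beta \star \varphi_0 \star \alpha$ where $\varphi_0$ is a smooth matrix coefficient, and iterating we may even arrange $\varphi_0$ to be a $K$--finite matrix coefficient.

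The key analytic point is then: if $\varphi$ is any $K$--finite matrix coefficient of $\pi$ and $\alpha,\beta\in C_c^\infty(G)$, then $\beta\star\varphi\star\alpha\in L^p(G)$. Here I would invoke the hypothesis together with the standard fact (alluded to in the text) that in the discrete series all $K$--finite matrix coefficients lie in $L^p(G)$ as soon as one nonzero one does — this follows because $\mathcal H_K$ is a finitely generated $(\mathfrak g,K)$--module, so every $K$--finite matrix coefficient is a finite sum of right $\mathcal U(\mathfrak g_{\mathbb C})$-- and left $\mathcal U(\mathfrak g_{\mathbb C})$--translates of the given one, and differentiation preserves $L^p(G)$ for matrix coefficients of an irreducible unitary representation (using that $d\pi(u)h$ is again a vector in $\mathcal H$, or the Dixmier--Malliavin type argument from Lemma \ref{ids-1}). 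Granting $\varphi\in L^p(G)$, convolution on the left by $\beta\in C_c^\infty(G)\subset L^1(G)$ keeps us in $L^p(G)$ by Young's inequality, and likewise convolution on the right by $\alpha\in L^1(G)$; so $\beta\star\varphi\star\alpha\in L^p(G)$, and a finite sum of such is in $L^p(G)$. Combining with the decomposition of the previous paragraph yields the claim for all smooth matrix coefficients.

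The step I expect to be the genuine obstacle is making precise and rigorous the passage ``arbitrary smooth matrix coefficient $=$ finite sum of $C_c^\infty(G)$--smoothings of a $K$--finite matrix coefficient.'' The Dixmier--Malliavin decomposition of Lemma \ref{ids-1}(i) gives $h=\sum_i\pi(\alpha_i)h_i$ but with $h_i$ merely smooth, not $K$--finite, so one application does not reduce to the $K$--finite case; one must either iterate with care, or instead argue that $\pi(\alpha)h$ for $\alpha\in C_c^\infty(G)$ and $h\in\mathcal H$ already has matrix coefficients expressible via $K$--finite ones plus an $L^1$-- or $L^p$--small correction, or — cleanest — use the compactness of $K$ to write any $h\in\mathcal H^\infty$ as an absolutely convergent sum $h=\sum_{\delta}E_\delta(h)$ (Lemma \ref{ids-1}(ii)) with each $E_\delta(h)\in\mathcal H_K$, control the tail in the Fréchet topology, and then estimate the corresponding tail of matrix coefficients in $L^p(G)$ using the moderate growth bound $\|\pi^\infty(g)h\|_\rho\le C\|g\|^n\|h\|_\nu$ together with Lemma \ref{rmg-2}, which guarantees $\|g\|^{-m}\in L^1(G)$ for $m$ large. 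Matching the polynomial growth $\|g\|^n$ coming from moderate growth against the $L^p$--decay of the $K$--finite coefficients (which decay like a negative power of $\|g\|$ by the same integrability input, since $p<2$) is exactly where the condition $p\in[1,2[$ and the quantitative decay of integrable-type matrix coefficients must be used; I would therefore organize the final estimate as: fix the exponent of decay $N$ of the $K$--finite coefficients coming from $L^p$--membership, choose the smoothing datum so that the remaining $\delta$--tail is summably small in the relevant seminorm $\|\cdot\|_u$, and bound $\|\beta\star\varphi\star\alpha\|_{L^p}$ term by term via Young, letting the number of terms be absorbed by absolute convergence.
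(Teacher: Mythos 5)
There is a genuine gap, and you in fact point at it yourself: the whole difficulty of the lemma is the passage from $K$--finite to general smooth vectors, and neither of your two devices closes it. The Dixmier--Malliavin reduction only rewrites a smooth matrix coefficient as a finite sum of two--sided convolutions $\beta\star\varphi_0\star\alpha$ with $\varphi_0$ again a \emph{smooth} (not $K$--finite) matrix coefficient, so invoking Young's inequality presupposes exactly what is to be proved; iterating Dixmier--Malliavin never produces $K$--finite vectors. Your fallback --- expand $h=\sum_\delta E_\delta(h)$, $h'=\sum_\gamma E_\gamma(h')$ and sum the $K$--finite pieces --- needs an estimate of the form $\|c_{u,v}\|_{L^p(G)}\le C_{\delta,\gamma}\,\|u\|\,\|v\|$ for $u\in\mathcal H(\delta)$, $v\in\mathcal H(\gamma)$ with $C_{\delta,\gamma}$ growing at most polynomially in the $K$--types, so that the rapid decay of $\|E_\delta(h)\|$ can absorb it. Such uniform--in--$K$--type $L^p$ bounds are exactly the hard analytic content here; they do not follow from the moderate growth bound $\|\pi^\infty(g)h\|_\rho\le C\|g\|^n\|h\|_\nu$ (which gives polynomial \emph{growth}, hence no $L^p$ smallness of tails on the noncompact group), nor from Lemma \ref{rmg-2}, nor from the mere $L^p$--membership of the $K$--finite coefficients (an $L^p$ function need not decay pointwise, and the pointwise decay of $K$--finite coefficients requires the Harish--Chandra/Casselman--Mili\v ci\'c asymptotic theory, which you never bring in quantitatively). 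So the ``final estimate'' you describe is a plan whose key inequality is missing.

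The paper avoids all of this with one decisive input from the Casselman--Wallach theory that your proposal never uses: by Wallach (Real Reductive Groups II, Theorem 11.8.2) the Fr\'echet module $\mathcal H^\infty$ is \emph{algebraically} irreducible under the Schwartz algebra $\mathcal S(G)$, so for a fixed nonzero $K$--finite vector $h''$ one can write $h=\pi(f)h''$ and $h'=\pi(f')h''$ with single functions $f,f'\in\mathcal S(G)\subset L^1(G)$. Then
$$
c_{h,h'}(x)=\int_G\int_G f(g)\overline{f'(g')}\,\bigl\langle \pi\bigl((g')^{-1}xg\bigr)h'',h''\bigr\rangle\,dg\,dg',
$$
and a Jensen--type convexity estimate plus left and right invariance of Haar measure gives $\|c_{h,h'}\|_p^p\le \|f\|_1\|f'\|_1\|c_{h'',h''}\|_p^p$, using only that $c_{h'',h''}\in L^p(G)$. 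If you want to salvage your argument, replace ``$h$ is an absolutely convergent sum of $K$--finite vectors'' by ``$h$ lies in $\mathcal S(G)\cdot h''$'': that is the step your proof is missing.
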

    \begin{proof}  The space  $\mathcal H^\infty$ is a smooth Fr\' echet representation
      of moderate growth (\cite{W2}, Lemma 11.5.1). On that space the Fr\' echet algebra $\mathcal S(G)$ acts (see Section \ref{src}
      for the definition of the space $\mathcal S(G)$; the action is described in \cite{W2}, 11.8).  By (\cite{W2}, Theorem 11.8.2),
      $\mathcal H^\infty$ is irreducible as an algebraic module for $\mathcal S(G)$. In particular, for $h{''}\in  \mathcal H^\infty$, $h{''}\neq 0$,
      we have $\mathcal H^\infty=\mathcal S(G)h{''}$. Now, let us consider the matrix coefficient $c_{h, h'}$, where $h, h'\in \mathcal H^\infty$.
      Select $h{''}\in \mathcal H_K$ such that $h{''}\neq 0$. Then, since there exists $K$--finite matrix coefficients in $L^p(G)$, then all of them are
      in $L^p(G)$. In particular, we have $c_{h{''}, h{''}}\in L^p(G)$. Next, we select $f, f'\in \mathcal S(G)$ such that $h=\pi(f)h{''}$ and
      $h'=\pi(f')h{''}$.   Now, since $\pi$ is unitary, we have the following:
      \begin{align*}
        c_{h, h'}(x)&=\langle \pi(x)h, \ h'\rangle=\langle \pi(x)\pi(f)h{''}, \ h\pi(f')h{''}\rangle\\
        &=\int_G\int_G  f(g)\overline{f'(g')} \langle \pi((g')^{-1}xg)h{''}, \ h{''}\rangle dgdg'.
      \end{align*}
      Hence, using Remark \ref{convex}, we have 
      \begin{align*}
      \int_G \left|c_{h, h'}(x)\right|^p dx & \le \int_G\int_G \int_G
      \left|f(g)\right| \cdot \left|\overline{f'(g')}\right| \left|\langle \pi((g')^{-1}xg)h{''}, \ h{''}\rangle\right|^pdx dgdg'\\
      &=
      \left|\left|f\right|\right|_1   \left|\left|f'\right|\right|_1   \left|\left| c_{h{''}, h{''}}\right|\right|_p^P<\infty.
      \end{align*}
    \end{proof}

    \medskip
    \begin{Rem}\label{convex} Let $p\in [1, \infty[$. Then the function 
function $x\mapsto x^p$  is convex for $x>0$. This means that for 
 $x_1, \ldots, x_n >0$, and $\lambda_1, \ldots,
\lambda_n\ge 0$, $\lambda_1+\cdots+\lambda_n=1$, we have
$(\lambda_1 \cdot x_1+\cdots +\lambda_n\cdot  x_n)^p\leq \lambda_1
\cdot x_1^p +\cdots +
\lambda_n \cdot x_n^p.$ Now, if $H$ is a measurable space, and $\alpha\ge 0$
is a measurable function on $H$ such that $\int_H\alpha(h)dh =1$. Then for every measurable function
$f: H\rightarrow \bbC$, we have the following inequality:
$$
\left(\int_H |f(h)|\cdot \alpha(h)dh\right)^p\leq 
\int_H |f(h)|^p \cdot \alpha(h)dh.
$$
This follows from the inequality above considering integral
sums and taking the limit. We leave details to the reader.
\end{Rem}
    
    \vskip .2in
The following lemma is one of the main technical  results of the present section:

\begin{Lem}\label{ids-2}
  Assume that $(\pi, \mathcal H)$ is integrable.  Put
  $c_{h, h'}(g)=\langle \pi(g)h, \ h'\rangle$, $h, h'\in \mathcal H_K$. 
  Then, we have the following:
  \begin{itemize}
    \item[(i)] $c_{h, h'}$ is a smooth vector in the Banach representation $L^1(G)$, where $G$ acts by right--translations.
    \item[(ii)]   Let us fix $h'\in \mathcal H_K$, $h'\neq 0$. Then, the map
      the map $c_{h'}: \ h\longmapsto c_{h, h'}$          
      is an infinitesimal  equivalence of $(\pi, \mathcal H)$ with an closed admissible
      irreducible subrepresentation $\mathcal B_{h'}$
      of $G$ in the Banach representation
      on $L^1(G)$ given by the right--translations. In particular, we have the following:
      \begin{itemize}
      \item[(a)] $c_{h'}\left(\mathcal H_K\right)$  is the space of $K$--finite vectors in $\mathcal B_{h'}$. It is contained
        in $\mathcal B_{h'}^\infty$.
      \item[(b)] We have $c_{h'}\left(\mathcal H_K(\delta)\right) = \mathcal B_{h'}^\infty(\delta)=\mathcal B_{h'}(\delta)$, for all
        $\delta\in \hat{K}$.
      \item[(c)] If  $\chi_\pi$ is the infinitesimal character of $\pi$, then all $f\in  \mathcal B^\infty_{h'}$ transforms according to
        $\chi_\pi$: $z.f=\chi_\pi(z)f$, \ \ $f\in \mathcal B^\infty_{h'}$, $z\in \mathcal Z(\mathfrak g_{\mathbb C})$. In particular,
        the vectors in  $B^\infty_{h'}$ are $\cal Z(\mathfrak g_{\mathbb C})$--finite.
        \end{itemize}
    \item[(iii)]  For each neighborhood $V$ of $1$ and $h'\in \mathcal H_K$, there exists
      $\beta\in C_c^\infty(G)$, $\supp{(\beta)}\subset V$,  such that
     $$
      l(\beta)f= f, \ \ f\in \mathcal B_{h'}.
     $$
     In particular, $\mathcal B_{h'}$ consists of smooth vectors in the left--regular representation
     of $G$ on $L^1(G)$, and  $\mathcal B_{h'}\subset C^\infty(G)$ .
   \item[(iv)]  There exists a finite set $S\subset \hat{K}$, such that
     $$
     E^l_S\left(f\right)=f,  \ \ f\in \mathcal B_{h'}.
    $$
    In particular, we have the following:
    $$
    E^l_S\left(\mathcal B_{h'}\right)=\mathcal B_{h'}.
    $$

  \item[(v)] For $u\in \mathcal U(\mathfrak g_{\mathbb C})$ and $h'\in \mathcal H_K$,
    we have $l(u)\mathcal B_{h'}\subset \mathcal B_{d\pi(u)h'}$.    

  \item[(vi)]  Let us fix $h'\in \mathcal H_K$, $h'\neq 0$. Select $h{''}\in \mathcal H$ such that $\langle h{''},
    h'\rangle=d(\pi)$, where $d(\pi)$ is the formal degree of $\pi$.  Then, the map $d_{h{''}}: \mathcal B_{h'}\longrightarrow
    \mathcal H$ defined by
    $$
    f\in \mathcal B_{h'}\longmapsto \ \pi(f^\vee)h{''} =\int_G f^\vee(x) \pi(x)h{''} dx
    $$
    is a continuous $G$--invariant embedding with the dense image. It satisfies:
    $$
    ||d_{h{''}}\left(f\right)||\le ||h{''}|| \cdot ||f||_1, \ \ f\in \mathcal B_{h'}.
    $$
    Moreover, we have the following:
    $$
    d_{h{''}}\left(c_{h'}\left(h\right)\right)=h, \ \ h\in \mathcal H_K.
    $$
    \item[(vii)] Finally, the restriction of $d_{h{''}}$ is a continuous isomorphism of Fr\' echet representations  $\mathcal B_{h'}^\infty\simeq
      \mathcal H^\infty$.    Its inverse is the map $h\in \mathcal H^\infty \longrightarrow c_{h, h'}$. In particular,
      $\mathcal B_{h'}^\infty=\left\{ c_{h, h'}; \ \ h\in \mathcal H^\infty\right\}.
      $
  \end{itemize}
  \end{Lem}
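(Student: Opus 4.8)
The plan is to recognise $d_{h''}$, restricted to smooth vectors, as the Casselman--Wallach globalization of the inverse of the $(\mathfrak g, K)$-isomorphism $c_{h'}$, and then to pin the inverse map down pointwise as $h\mapsto c_{h,h'}$.

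First I would pass to smooth vectors in (vi). By (vi), $d_{h''}\colon \mathcal B_{h'}\to\mathcal H$ is a continuous $G$-equivariant injection with $\|d_{h''}(f)\|\le\|h''\|\,\|f\|_1$; since $d_{h''}$ intertwines the $\mathcal U(\mathfrak g_{\mathbb C})$-actions, this gives $\|d_{h''}(b)\|_u\le\|h''\|\,\|b\|_u$ for every $u\in\mathcal U(\mathfrak g_{\mathbb C})$, so $d_{h''}$ restricts to a continuous $G$-map $d_{h''}^\infty\colon\mathcal B_{h'}^\infty\to\mathcal H^\infty$ of Fr\'echet representations. Both $\mathcal B_{h'}^\infty$ and $\mathcal H^\infty$ are smooth Fr\'echet representations of moderate growth (\cite{W2}, Lemma 11.5.1), admissible, with underlying Harish--Chandra module isomorphic to $\mathcal H_K$; thus both are Casselman--Wallach representations. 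On $K$-finite vectors, parts (ii)(a), (ii)(b) and (vi) give $(\mathcal B_{h'})_K=(\mathcal B_{h'}^\infty)_K=c_{h'}(\mathcal H_K)$ and $d_{h''}(c_{h,h'})=h$, so $d_{h''}^\infty$ restricts to the $(\mathfrak g,K)$-isomorphism inverse to $c_{h'}\colon\mathcal H_K\xrightarrow{\ \sim\ }(\mathcal B_{h'})_K$.

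Next I would invoke the Casselman--Wallach theory (\cite{W2}, Chapter 11, or \cite{casselman}): a morphism of Casselman--Wallach representations inducing an isomorphism on the underlying Harish--Chandra modules is automatically a topological isomorphism. Applying this to $d_{h''}^\infty$ shows that $d_{h''}^\infty\colon\mathcal B_{h'}^\infty\to\mathcal H^\infty$ is an isomorphism of Fr\'echet representations; write $\Psi\colon\mathcal H^\infty\to\mathcal B_{h'}^\infty$ for its (continuous, $G$-equivariant) inverse. This is the step I expect to carry the real content; the surjectivity of $d_{h''}^\infty$ is not accessible by soft arguments (density plus injectivity is not enough) and genuinely uses the globalization theorem.

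Finally I would identify $\Psi$ with $h\mapsto c_{h,h'}$. On $\mathcal H_K$ this holds by the previous paragraph. For arbitrary $h\in\mathcal H^\infty$, choose $h_n\in\mathcal H_K$ with $h_n\to h$ in $\mathcal H^\infty$ (possible by density of $\mathcal H_K$); then $\Psi(h_n)=c_{h_n,h'}\to\Psi(h)$ in $\mathcal B_{h'}^\infty$, hence in $L^1(G)$, so after passing to a subsequence $c_{h_n,h'}(g)\to\Psi(h)(g)$ for almost every $g$. On the other hand $c_{h_n,h'}(g)=\langle\pi(g)h_n,h'\rangle\to\langle\pi(g)h,h'\rangle=c_{h,h'}(g)$ for every $g$, since $h_n\to h$ in $\mathcal H$. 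Hence $\Psi(h)=c_{h,h'}$ almost everywhere, and as $\mathcal B_{h'}\subset C^\infty(G)$ by (iii) both sides are continuous functions, so $\Psi(h)=c_{h,h'}$ identically. Therefore $d_{h''}^\infty$ is the asserted continuous isomorphism, its inverse is $h\mapsto c_{h,h'}$, and $\mathcal B_{h'}^\infty=\Psi(\mathcal H^\infty)=\{c_{h,h'}:h\in\mathcal H^\infty\}$, which is (vii).
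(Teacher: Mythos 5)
Your proposal only addresses part (vii): parts (i)--(vi) are quoted as inputs ((ii)(a), (ii)(b), (iii), (vi) are all invoked) but never proved, whereas the statement is the whole lemma. Those parts are not formalities in the paper: (i) and (iii) rest on Harish--Chandra's theorem producing $\alpha,\beta\in C_c^\infty(G)$ with $c_{h,h'}=c_{h,h'}\star\alpha$ and $l(\beta)f=f$; (ii) requires the $L^1$-limit argument with the projectors $E_\delta$ to identify $\mathcal B_{h'}(\delta)$ with $c_{h'}(\mathcal H_K(\delta))$ (admissibility) and to deduce irreducibility; and (vi) is the Schur orthogonality computation giving $d_{h''}(c_{h'}(h))=h$. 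As a proof of the lemma as stated, this is a genuine gap.

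For (vii) itself your argument is correct, and it takes a genuinely different route from the paper. The paper proves surjectivity explicitly: it first shows (Lemma \ref{ids-0000}, itself resting on Casselman--Wallach via the irreducibility of $\mathcal H^\infty$ under $\mathcal S(G)$) that all smooth matrix coefficients lie in $L^1(G)$, then extends $c_{h'}$ to $\mathcal H^\infty$, uses Dixmier--Malliavin (Lemma \ref{ids-1}(i)) to write $c_{h,h'}=\sum_i r(\alpha_i)c_{h_i,h'}$ and conclude $c_{h,h'}\in\mathcal B_{h'}^\infty$, verifies $d_{h''}(c_{h,h'})=h$ for smooth $h$ by Schur orthogonality, and finishes with the open mapping theorem. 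You instead apply the Casselman--Wallach equivalence directly to the morphism $d_{h''}^\infty$ between two Casselman--Wallach globalizations of $\mathcal H_K$, getting the topological isomorphism at once, and then identify the inverse with $h\mapsto c_{h,h'}$ by the density/a.e.-convergence argument (which is fine, since $\mathcal B_{h'}\subset C^\infty(G)$ by (iii) lets you upgrade a.e.\ equality to everywhere). Both routes ultimately lean on Casselman--Wallach; yours is shorter and obtains the integrability of smooth matrix coefficients (the content of Lemma \ref{ids-0000}) as a by-product rather than an input, while the paper's is more explicit and isolates that integrability statement for separate use. One small point to make explicit in your version: $\mathcal B_{h'}^\infty$ qualifies as a Casselman--Wallach representation because its $K$-finite vectors are exactly $c_{h'}(\mathcal H_K)$ (that is (ii)(b)), so the underlying Harish--Chandra module is irreducible admissible; you use this but should say it when invoking the globalization theorem.
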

  \begin{proof} First, we prove (i). The function $c_{h, h'}$ is
    $\cal Z(\mathfrak g_{\mathbb C})$--finite  and $K$--finite on the right. Hence, by (\cite{hc}, Theorem 1), there exists
    $\alpha\in C_c^\infty(G)$ (depending of $h$) such that
    $$
    c_{h, h'}=c_{h, h'}\star \alpha= r(\alpha^\vee)c_{h, h'}.
    $$
    In view of the discussion before the statement of the theorem, this proves (i). Now, we prove (ii). 
    Clearly, the space all $c_{h, h'}$, $h\in \mathcal H_K$ is an irreducible $(\mathfrak g, K)$ isomorphic to
  $\mathcal H_K$. The closure $\mathcal B=\mathcal B_{h'}$  of such functions in $L^1(G)$ is clearly $G$--invariant. For each
  $\delta\in \hat{K}$, we have $E_\delta(\mathcal B)$ is spanned by the functions $c_{h, h'}$, $h\in \mathcal H_K(\delta)$.
  Indeed, for $f\in \mathcal B$, there exists a sequence of vectors $h_n$, $n\ge 1$ such that
  $$
  \lim_n \int_G\left| f(x)-c_{h_n, h'}(x)\right| dx =0.
  $$
  Hence, for $\delta\in \hat{K}$, we have the following:
  $$
  E_\delta(c_{h_n, h'})(x)=\int_{K} d(\delta) \overline{\xi_\delta(k)}c_{h_n, h'}(xk) dk=
  \left\langle \pi(x) E_\delta(h_n), \ h'\right\rangle=c_{E_\delta(h_n), h'}(x), \ \ x\in G.
$$
  Now, we have the following:

  \begin{align*}
    \int_G\left| E_\delta(f)(x)-c_{E_\delta(h_n), h'}(x)\right|dx & =  \int_G\left| E_\delta(f)(x)-E_\delta(c_{h_n, h'})(x)\right| dx\\
    & =  \int_G\left| \int_{K} d(\delta) \overline{\xi_\delta(k)} \left(f(xk)- c_{h_n, h'}(xk)\right)dk\right| dx\\
    &\le  \int_{K} d(\delta) \left|\overline{\xi_\delta(k)} \right| dk \int_G\left| f(x)-c_{h_n, h'}(x)\right|dx.
   \end{align*}
  This shows
  $$
  c_{E_\delta(h_n), h'} \overset{L^1}{\longrightarrow} E_\delta(f).
  $$
  But the sequence belongs to a finite--dimensional subspace of $\mathcal B$. Which is because of that closed. Hence the claim.
   In particular, $\mathcal B$ is admissible.
   Moreover, this also proves the claims in (b), and consequently in (a) since by standard argument: the vectors in
   $c_{h'}\left(\mathcal H_K\right)$ are $\cal Z(\mathfrak g_{\mathbb C})$--finite and $K$--finite. Hence, real analytic, and
   in particular, smooth. The claim in (c) is  also standard. It is true for $f\in c_{h'}\left(\mathcal H_K\right)$  which
   is the space of $K$--finite vectors in $\mathcal B_{h'}$. But this space is dense in the Fr\' echet space 
   $\mathcal B^\infty_{h'}$ (see Lemma \ref{ids-1} (ii)).  The description of topology on $\mathcal B^\infty_{h'}$  given in the second
   paragraph of this section immediately implies the claim. 
   
  Finally, we show that $\mathcal B$ is irreducible. If $\mathcal B'$ is non--zero  closed subrepresentation of
  $\mathcal B$. Then, for
  each $\delta\in \hat{K}$, we have
  $$
  \mathcal B'(\delta)=E_\delta(\mathcal B')\subset E_\delta(\mathcal B)=\mathcal B(\delta)=
  c_{h'}\left(\mathcal H_K(\delta)\right).
  $$
  But then by (i) and the irreducibility of $(\mathfrak g, K)$--module  $\mathcal H_K$, we have  
  $$
  c_{h'}\left(\mathcal H_K\right) \subset \mathcal B'_K.
  $$
  Hence, by definition of $\mathcal B$, we have $\mathcal B'=\mathcal B$.

  Let us prove (iii). We may consider $\mathcal H$ to be a subrepresentation of $L^2(G)$ under the right--translations.
  Since $h'\in \mathcal H_K$, $h'$  is a $\cal Z(\mathfrak g_{\mathbb C})$--finite  and $K$--finite on the right. So, by
  (\cite{hc}, Theorem 1), for each neighborhood $V$ of $1\in G$,  there exists $\beta_1\in C_c^\infty(G)$,
  $\supp{(\beta_1)}\subset V$ such that
  $$
  \pi(\beta_1)h'=h'\star \beta_1^\vee =h'.
  $$
  Now, we compute
  \begin{align*}
    c_{h, h'}(x) &=\int_G \langle \pi(x)h, \ h'\rangle dx \\
    &=\int_G \langle \pi(x)h, \ \pi(\beta_1)h'\rangle dx\\
    &=\int_G \int_G \overline{\beta_1(y)} \langle \pi(x)h, \ \pi(y)h'\rangle dxdy\\
    &=\int_G \overline{\beta_1(y)} \left(\int_G  \langle \pi(y^{-1}x)h, \ h'\rangle dx\right) dy\\
    &=\overline{\beta_1}\star c_{h, h'}(x). \\
    \end{align*}
  Now, let $\beta=\overline{\beta_1}$ and apply the Dominated convergence theorem to get (iii).

  The proof of (iv) is similar to the proof of (iii).  Since $h'$ is $K$--finite, there exists a finite set
  $T\subset \hat{K}$ such that $E_T(h')=h'$,  $E_T=\sum_{\delta\in \hat{K}} E_\delta$. Now, the reader can easily
  adapt the argument in (iii) to get (iv).

  We prove (v). Let $h\in\mathcal H_K$. Then, it is easy to see 
  $$
  l(u)c_{h, h'}=c_{h, \pi(u)h'}.
  $$
  Thus, we have a linear map from the space of all functions $c_{h, h'}$, $h\in\mathcal H_K$, into
  $\mathcal B_{\pi(u)h'}$. We show that it extend to a bounded map $\mathcal B_{h'}\longrightarrow \mathcal B_{\pi(u)h'}$.
  This follows from (iii) and the expression (\ref{ids-2000a}). Let us select  $\beta\in C_c^\infty(G)$ such that
  $f=\beta\star f$ for all $f\in \mathcal B_{h'}$. Thus, the map $f\longmapsto l(u)f$ can be with the aid of
   (\ref{ids-2000a}) written as follows:
$$
  f\longmapsto l(u)f=\left(l(u)\beta\right)\star f
  $$
which is clearly bounded map.

  We prove (vi).  It is well--known that $F\in L^1(G)$ acts on the unitary representation $(\pi, \mathcal H)$ as follows:
  $\pi(F)h=\int_Gf(x) \pi(g)hdx$, $h\in \mathcal H$. Moreover, we have $||\pi(F)||\le ||h|| \cdot ||F||_1$. This immediately 
  implies that the maps in (vi) is well--defined and continuous. Since
  $  \left(r(g)F\right)^\vee =l(g) F^\vee$, it is also $G$--invariant. The rest of the claims in (vi) follow from the
  Schur orthogonality relation:
  \begin{equation}\label{ids-2000}
  \int_G \langle \pi(x) h{''}, \ h_1\rangle \cdot \overline{  \langle \pi(x) h', \ h\rangle } dx=
  \frac{1}{d(\pi)}  \langle  h, \ h_1\rangle  \langle h{''}, \ h'\rangle= \langle  h, \ h_1\rangle, \ \ h_1\in \mathcal H,
  \ \ h\in \mathcal H_K.
  \end{equation}
  Since $\pi$ is unitary, the right--hand side can be transformed as follows:
  $$
  \int_G c_{h, h'}^\vee (x) \langle \pi(x) h{''}, \ h_1\rangle  dx.
  $$
  Hence, (\ref{ids-2000}) implies
  \begin{equation}\label{ids-0001}
  d_{h{''}}\left(c_{h'}\left(h\right)\right)=  \int_G c_{h, h'}^\vee (x) \ \pi(x) h{''}   dx=h, \ \ h\in \mathcal H_K.
  \end{equation}
  This identity implies that image of $d_{h{''}}$ is dense in $\mathcal H$ since it contains $K$--finite vectors.

  Finally, we prove (vii). It is obvious that the restriction of $d_{h{''}}$ is a continuous embedding of Fr\' echet spaces
  $\mathcal B_{h'}^\infty\hookrightarrow  \mathcal H^\infty$.  Using Lemma \ref{ids-0000}, we can extend the map $c_{h'}$ to $\mathcal H^\infty$
  into $L^1(G)$. Let us show that this map has image contained in $\mathcal B_{h'}^\infty$. Let $h\in \mathcal H^\infty$. Then, by Lemma \ref{ids-1} (i),
  there exists  $h_1, \ldots, h_l\in \mathcal H^\infty$, and  $\alpha_1, \ldots, \alpha_l\in C_c^\infty(G)$ such that
    $$
    h=\sum_{i=1}^l \pi(\alpha_i)h_i.
    $$
    Then, we have the following:
    $$
    c_{h, h'}(x)= \sum_{i=1}^l \int_G \alpha_i(y) \langle \pi(xy)h_i, h'\rangle dy= \sum_{i=1}^l r(\alpha_i)c_{h_i, h'}(x).
    $$
    This implies that $c_{h, h'}$ is a smooth vector for the right action of $G$. Then, by Lemma \ref{ids-1} (ii), $c_{h, h'}\in \mathcal B_{h'}$.
    Finally, since it is smooth vector, $c_{h, h'}\in \mathcal B_{h'}^\infty$. Next, being based on 
    the   Schur orthogonality relation, (\ref{ids-0001}) is true for $h\in \mathcal H^\infty$. Which shows that $d_{h'}$ is bijective continuous map between
    Fr\' echet spaces. Hence, by the Open mapping theorem,  $d_{h'}$ is an isomorphism. Since $c_{h'}$ is its inverse, we obtain (vii).
  \end{proof}

\vskip .2in 
In the next lemma we do not assume in advance that
$G$ poses discrete series. But when favorable functions exist this must be the case.  It is an analogue
of (\cite{hc}, Lemma 77) for $L^2(G)$.

\vskip .2in 

\begin{Lem}\label{ids-200} Assume  $\varphi\in C^\infty(G)\cap L^1(G)$  that is  non--zero
  $\cal Z(\mathfrak g_{\mathbb C})$--finite and 
$K$--finite on the right.  Then, under the right translations, the  $(\mathfrak g, K)$--module generated by $\varphi$
  is a  direct sum of finitely many irreducible representations each infinitesimally equivalent to an integrable discrete 
  series of $G$. In particular, $G$ poses discrete series.  
\end{Lem}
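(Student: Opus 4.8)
The plan is to argue as follows. Let $\varphi \in C^\infty(G)\cap L^1(G)$ be nonzero, $\mathcal Z(\mathfrak g_{\mathbb C})$--finite and $K$--finite on the right, and let $V_K$ be the $(\mathfrak g,K)$--module it generates under right translations. First I would observe that $V_K$ is an admissible $\mathcal Z(\mathfrak g_{\mathbb C})$--finite $(\mathfrak g,K)$--module. Admissibility is the standard Harish--Chandra argument: by (\cite{hc}, Theorem 1) there is $\alpha\in C_c^\infty(G)$ with $\varphi=\varphi\star\alpha = r(\alpha^\vee)\varphi$, so every right translate of $\varphi$, hence every element of $V_K$, is smooth and lies in the image of a fixed idempotent-type operator associated to the finitely many $K$--types of $\varphi$; combined with $\mathcal Z(\mathfrak g_{\mathbb C})$--finiteness one gets that each $K$--isotypic component of $V_K$ is finite-dimensional. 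A finitely generated admissible $(\mathfrak g,K)$--module has finite length, so $V_K$ is a finite direct sum (or at least has a finite composition series) of irreducible $(\mathfrak g,K)$--modules $W_1,\dots,W_m$.

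Next I would pass from $V_K$ to its closure $V$ in $L^1(G)$ under right translations, or rather work directly with matrix-coefficient-type estimates. The key point is integrability: since $\varphi\in L^1(G)$ and $V_K$ is spanned by right translates of $\varphi$ under convolution by $C_c^\infty(G)$, every element of $V_K$ lies in $L^1(G)$ as well (translation is isometric on $L^1$ and $C_c^\infty(G)$ convolution is bounded on $L^1(G)$). So each irreducible constituent $W_i$ is, after globalization, infinitesimally equivalent to a closed irreducible subrepresentation of $L^1(G)$ under right translations. I would then invoke the structure already developed: by the Casselman--Wallach globalization theory and the discussion around Lemma \ref{ids-0000} and Lemma \ref{ids-2}, an irreducible $(\mathfrak g,K)$--module admitting a nonzero $K$--finite matrix coefficient in $L^1(G)$ must be (the Harish--Chandra module of) a discrete series representation, and in fact an integrable one; the reduction from "$L^1$" to "$L^2$" for $K$--finite matrix coefficients is exactly the exercise alluded to in the paragraph preceding Lemma \ref{ids-0000}. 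Concretely: a $K$--finite matrix coefficient $c$ in $L^1(G)$ satisfies a pointwise bound $|c(x)|\le C$ from $c=c\star\alpha$ together with $c\in L^1$ forcing $c$ to be bounded via the sup-norm estimate $|c\star\alpha(x)|\le \|c\|_1\sup|u.\alpha|$ applied with $u$ ranging suitably; then $c\in L^1\cap L^\infty\subset L^2$, so the representation is square-integrable, i.e. discrete series, and by definition integrable. This is the analogue of (\cite{hc}, Lemma 77).

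Then I would assemble: the functions in $V_K$, being $K$--finite and $\mathcal Z(\mathfrak g_{\mathbb C})$--finite on $G$ and lying in $L^1(G)$, are $K$--finite matrix coefficients (or finite sums of such) of the discrete series constituents $W_i$ — indeed each $W_i$ embeds in $L^1(G)$ by sending a $K$--finite vector to the appropriate matrix coefficient as in Lemma \ref{ids-2}(ii), and the restriction of right translation to $V_K$ decomposes accordingly. Since $V_K$ has finite length and each constituent is a discrete series Harish--Chandra module, unitarity of the discrete series gives complete reducibility of the closure: $V$ is a direct sum of finitely many closed irreducible subrepresentations of $L^1(G)$, each infinitesimally equivalent to an integrable discrete series. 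In particular such representations exist, so $G$ possesses discrete series (equivalently $\mathrm{rank}(G)=\mathrm{rank}(K)$).

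The main obstacle I anticipate is the complete reducibility at the $(\mathfrak g,K)$--module level, i.e. upgrading "finite composition series" to "finite direct sum": a priori $V_K$ could be a nonsplit extension. The resolution is that every irreducible subquotient is forced to be a discrete series module by the $L^1$-matrix-coefficient criterion above (applied to the cyclic vector's image in each piece), and discrete series representations are \emph{unitary}; a finite-length $(\mathfrak g,K)$--module all of whose constituents are unitarizable with the same infinitesimal character — or more directly, whose closure sits inside the unitary-type decomposition of $L^2(G)$ after noting each constituent's matrix coefficients are square-integrable — is completely reducible. So the real content is the $L^1\Rightarrow L^2$ step for the $K$--finite matrix coefficients of each constituent, which I would handle exactly as sketched (boundedness from $\varphi=\varphi\star\alpha$ plus integrability), mirroring the argument the authors reference inside the proof of this lemma.
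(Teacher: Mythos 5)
Your overall strategy (boundedness from $\varphi=\varphi\star\alpha$, hence $L^1\cap L^\infty\subset L^2$, decomposition into discrete series, then integrability via a $K$--finite matrix coefficient in $L^1(G)$) points in the right direction, but the decisive step is asserted rather than proved. The lemma's real content is \emph{integrability} of each constituent, and for that you must exhibit a nonzero $K$--finite matrix coefficient of each $V_i$ lying in $L^1(G)$. You claim that ``the functions in $V_K$ \dots are $K$--finite matrix coefficients of the constituents,'' justifying this by the embedding of Lemma \ref{ids-2}(ii); but that lemma \emph{presupposes} that $(\pi,\mathcal H)$ is integrable, so the appeal is circular, and in any case identifying the abstract module $W_i$ with a space of matrix coefficients does not identify the concrete functions of $V_K\subset L^1(G)$ as matrix coefficients. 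The paper's proof does genuine work exactly here: it first shows $\varphi$ (and $u\varphi$) is bounded, hence in $L^2(G)$, so the $(\mathfrak g,K)$--module $V$ sits inside $L^2(G)$ and by Wallach (\cite{W1}, Corollary 3.4.7 and Theorem 4.2.1) is a \emph{finite direct sum} $V_1\oplus\cdots\oplus V_l$ of discrete-series constituents; then it reduces to a left--$K$--finite generator by applying $E^l_\delta$ (which commutes with the right action), writes $\varphi=\sum_i\varphi_i$ with $0\neq\varphi_i\in E^l_S(V_i)$, applies Harish--Chandra's theorem \emph{on the left} to get $\varphi_i=\overline{\beta}\star\varphi_i$, and rewrites this as $\varphi_i(x)=\int_G\overline{E^r_{\wit S}(\beta^\vee)(y)}\,\varphi_i(yx)\,dy$, finally replacing $E^r_{\wit S}(\beta^\vee)$ by its orthogonal projection onto $Cl(V_i)\subset L^2(G)$. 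This exhibits $\varphi_i$ as a $K$--finite matrix coefficient of $Cl(V_i)$ lying in $L^1(G)$. Your proposal never addresses the left--$K$--finiteness issue, without which Harish--Chandra's theorem cannot be applied on the left and no second $K$--finite vector is produced; this is a genuine gap, not a routine detail.

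A secondary problem is your treatment of complete reducibility: the principle ``finite length with all constituents unitarizable implies a direct sum'' is not a valid general fact (irreducible unitary representations can have nontrivial self-extensions as $(\mathfrak g,K)$--modules, e.g.\ along continuous deformation parameters), and your fallback (``the closure sits inside the unitary decomposition of $L^2(G)$'') again leans on the unproven identification of elements of $V_K$ with square-integrable matrix coefficients. The clean route, which is the paper's, is to prove membership of $\varphi$ and all $u\varphi$ in $L^2(G)$ first and let the $L^2(G)$-theory deliver simultaneously the finiteness, the direct-sum structure, and the discrete-series property of the constituents; only the integrability then remains, handled as above.
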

\begin{proof}   First, the standard argument  shows that
  $\varphi \in L^2(G)$. We recall that argument   (see the proof of (\cite{MuicMathAnn}, Theorem 3.10) and
  (\cite{Borel-SL(2)-book},  Corollary 2.22) ).
  Since $\varphi$ that is  $\cal Z(\mathfrak g_{\mathbb C})$ and
  $K$--finite on the right, by the result of Harish--Chandra (see \cite{hc}, Theorem 1) there exists $\alpha\in C_c^\infty(G)$ such
  that  $\varphi=\varphi\star \alpha$. Since, $\varphi\in L^1(G)$, this immediately shows that $\varphi$ is bounded. Finally,
  we write  $G_{\ge 1}$ and $G_{<1}$ for the set of all $x\in G$ satisfying $|\varphi(x)|\ge 1$ and $|\varphi(x)|<1$, respectively.
Let $C> 0$ be the  bound of $\varphi$. Then, we have the following:
$$
\infty > \int_G|\varphi(x)|dx=\int_{G_{\ge 1}} |\varphi(x)|dx+\int_{G_{<1}} |\varphi(x)|dx \ge \int_{X_{\ge 1}} dx.
$$
Thus, the set $G_{\ge 1}$ has a finite measure. Finally,
$$
\int_G|\varphi(x)|^2 dx=\int_{G_{\ge 1}} |\varphi(x)|^2 dx+\int_{G_{<1}} |\varphi(x)|^2 dx \le C^2  \int_{G_{\ge 1}} dx+
\int_{G_{<1}} |\varphi(x)| dx<\infty.
$$
Of course the same is true for $u\varphi$, where $u\in \mathcal U(\mathfrak g_{\mathbb C})$.  
So, now the $(\mathfrak g, K)$--module $V$ generated by $\varphi$ which is apriori in $L^1(G)$ belongs to $L^2(G)$ and by
(\cite{W1}, Corollary 3.4.7 and Theorem  4.2.1) it is direct sum of finitely many irreducible representations each of which
is infinitesimally isomorphic to an  discrete series:

\begin{equation}\label{ids-202}
V=V_1\oplus \cdots \oplus V_l.
\end{equation}
In particular, this forces that $rank(K)=rank(G)$ in order to have a non--zero $\varphi$.  We note that each
$V_i$ consists of functions which are $\varphi\in C^\infty(G)\cap L^1(G)$  that is  $\cal Z(\mathfrak g_{\mathbb C})$--finite and 
$K$--finite on the right simply because $V$ consists of such functions. Therefore, in what follows we may assume that
$V$ is irreducible.

We normalize Haar measure on $K$ as follows  $\int_{K}dk=1$. Then,
for an irreducible representation $\delta\in \hat{K}$,
we write $d(\delta)$ and $\xi_{\delta}$ for the degree and the character
of $\delta$, and let
$$
 E_\delta^l=\int_{K} d(\delta) \overline{\xi_{\delta}(k)}
 l(k) dk
$$
be  the projector on  $\delta$--isotypic component of the  left regular representation
$l$ on  $L^2(G)$.

Since $\varphi\in L^2(G)$, we have the following expansion which converges absolutely in $L^2(G)$:
$$
\varphi=\sum_{\delta\in \hat{K}}  E_\delta^l(\varphi).
$$
Hence there exists $\delta\in \hat{K}$ such that
\begin{equation}\label{ids-201}
E_\delta^l(\varphi)\neq 0.
\end{equation}

Obviously,  this function is  also in $ C^\infty(G)\in L^1(G)$, and is  $\cal Z(\mathfrak g_{\mathbb C})$--finite and 
$K$--finite on the right.  For any $\delta\in \hat{K}$ satisfying (\ref{ids-201}), $\psi \longmapsto E_\delta^l(\psi)$
is a $(\mathfrak g, K)$--intertwining operator between $V$ and $E_\delta^l(V)$. Since $V$ is assumed to be irreducible,
it is an isomorphism.

To complete the proof the lemma, we may assume that $\varphi$ is $K$--finite on the left. But because we  find it interesting
not to assume that $V$ is irreducible. We use  (\ref{ids-202}).

Since $\varphi$ is $K$--finite on the right, we can find a finite non--empty subset
$S\subset \hat{K}$, such that the operator $E_S^l$ defined by $\sum_{\delta\in S}  E_\delta^l$ satisfies
$E_S^l(\varphi)=\varphi$. Since $V$ is generated by $\varphi$ and  left and right actions commute with each other,
we obtain another decomposition of $V$ into subrepresentations:

$$
V=E_S^l(V)=E_S^l(V_1)\oplus \cdots \oplus E_S^l(V_l).
$$
For each $i$, $E_S^l(V_i)=\{0\}$ or isomorphic to $V_i$ since $V_i$ is irreducible.
Hence, for each $i$,  $E_S^l(V_1)\simeq V_i$ beacuse we must have the 
same number of irreducible modules in both decomposition of $V$.

We write
$$
\varphi=\sum_i \varphi_i, \ \ \varphi_i\in E_S^l(V_i).
$$
Now, since $\varphi$ generates $V$, we must have $\varphi_i\neq 0$ for all $i$. Next,
it is obvious that each $\varphi_i$ is also $\cal Z(\mathfrak g_{\mathbb C})$ and
  $K$--finite, and in $L^1(G)$.  Let us fix $i\in\{1, \ldots, l\}$.
Since $\varphi_i$ is also $K$--finite on the left, again by a result of  Harish--Chandra (see \cite{hc}, Theorem 1),
there exists  $\beta\in C_c^\infty(G)$ such that $\varphi_i=\overline{\beta}\star \varphi_i$. This can be written as
$$
\varphi_i(x)=\int_G \overline{\beta^\vee (y)}\varphi_i(yx) dx
$$
which easily implies that $\varphi_i$ is a $K$--fine matrix coefficient of $V_i$.  Indeed, we have

\begin{align*}
  \varphi_i(x) &=E_S^l(\varphi_i)(x)=\int_{K} d(\delta) \overline{\xi_{\delta}(k)}\varphi_i(k^{-1}x)dk\\
  &=\int_{K} d(\delta) \overline{\xi_{\delta}(k)}\left(\int_G \overline{\beta^\vee (y)}\varphi_i(yk^{-1}x) dx\right)dk\\
  &=\int_{K} d(\delta) \overline{\xi_{\delta}(k)}\left(\int_G \overline{\beta^\vee (yk)}\varphi_i(yx) dx\right)dk\\
  &=\int_G E_S^r\left( \overline{\beta^\vee}\right)(y)\varphi_i(yx) dx\\
  &=\int_G \overline{E_{\wit{S}}^r\left(\beta^\vee\right)(y)}\varphi_i(yx) dx,\\
\end{align*}
where $E_S^r$ is analogously defined for the right translations, and $\wit{S}=\{\wit{\delta}: \ \ \delta \in S\}$ the set
of contragredient representations. In the last equality we may replace $E_{\wit{S}}^r\left(\beta^\vee\right)$ with its orthogonal
projection to $Cl(V_i)$ and we are done.  Having proved that $\varphi_i$ is a $K$--finite matrix coefficient of $V_i$, we
immediately get each $V_i$ is integrable. 
\end{proof}

\vskip .2in
The following can be seen from the last part of the proof:

\begin{Lem}\label{ids-203} Assume  $\varphi\in C^\infty(G)\cap L^1(G)$  that is  $\cal Z(\mathfrak g_{\mathbb C})$--finite and 
  $K$--finite on the left and right. Assume that under the right translations, the  $(\mathfrak g, K)$--module generated by
  $\varphi$ is irreducible. Let $\mathcal H$ and $\mathcal B$ be the closures of $V$ in $L^2(G)$ and $L^1(G)$, respectively.
  Then,  the representation of $G$ on $\mathcal H$ by right translations is integrable, and
  there exists $h'\in \mathcal H_K$ such that (see Lemma \ref{ids-2})
  $$ 
  \mathcal B=\mathcal B_{h'}.
  $$
\end{Lem}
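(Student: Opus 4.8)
The plan is to deduce the statement from Lemma \ref{ids-200} and Lemma \ref{ids-2}, extracting the data I need from the computation at the end of the proof of Lemma \ref{ids-200}. As recalled there, the hypotheses on $\varphi$ already force $\varphi\in L^2(G)$: writing $\varphi=\varphi\star\alpha$ with $\alpha\in C_c^\infty(G)$ (by \cite{hc}, Theorem 1) shows $\varphi$ is bounded, and combining boundedness with $\varphi\in L^1(G)$ gives square integrability; hence $V\subset L^2(G)$ and $\mathcal H=Cl(V)$ inside $L^2(G)$ makes sense. Since $V$ is assumed irreducible, Lemma \ref{ids-200} shows $V$ is infinitesimally equivalent to an integrable discrete series. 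Because $\varphi$ is here also $K$--finite on the left, the final part of the proof of Lemma \ref{ids-200} shows that $\varphi$ is a $K$--finite matrix coefficient of $\mathcal H$: there are a finite set $S\subset\hat K$ and $\beta\in C_c^\infty(G)$ such that, after replacing $E^r_{\widetilde S}(\beta^\vee)$ by its orthogonal projection onto $\mathcal H$, one has $\varphi=c_{h,h'}$ — writing $\pi$ for the right--translation representation of $G$ on $\mathcal H$ and $c_{h,h'}(x)=\langle\pi(x)h,h'\rangle$ — with $h:=\varphi\in\mathcal H_K$ and $h'\in\mathcal H_K$ (this projection is $K$--finite for right translations because $E^r_{\widetilde S}(\beta^\vee)$ is and $\mathcal H$ is right--translation invariant). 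In particular $\mathcal H$ is a discrete series representation possessing the nonzero $K$--finite matrix coefficient $\varphi$ in $L^1(G)$, hence is integrable; this is the first assertion.

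Next I would identify $V$ with $c_{h'}(\mathcal H_K)$. For $y\in G$ one has $(r(y)c_{h,h'})(x)=c_{h,h'}(xy)=\langle\pi(xy)h,h'\rangle=c_{\pi(y)h,h'}(x)$, so right translation of $c_{h,h'}$ simply moves the first argument; likewise $E^r_\delta c_{h,h'}=c_{E_\delta(h),h'}$ for $\delta\in\hat K$, exactly as in the proof of Lemma \ref{ids-2}(ii). Applying the right--translation $(\mathfrak g,K)$--action to $\varphi=c_{h,h'}$ with $h=\varphi\neq 0$, and using that a nonzero vector generates the irreducible $(\mathfrak g,K)$--module $\mathcal H_K$, we conclude that the $(\mathfrak g,K)$--module generated by $\varphi$ under right translations equals $\{c_{h_1,h'}:h_1\in\mathcal H_K\}=c_{h'}(\mathcal H_K)$; that module is $V$ by definition, so $V=c_{h'}(\mathcal H_K)$.

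Finally I would pass to $L^1$--closures. By construction (proof of Lemma \ref{ids-2}(ii)) $\mathcal B_{h'}$ is the closure in $L^1(G)$ of $c_{h'}(\mathcal H_K)$, which is legitimate since each $c_{h_1,h'}$, $h_1\in\mathcal H_K$, lies in $L^1(G)$ as $\pi$ is integrable. Therefore
\[
\mathcal B=Cl(V)=Cl\big(c_{h'}(\mathcal H_K)\big)=\mathcal B_{h'},
\]
the closures taken in $L^1(G)$, which is the remaining assertion. I expect the only delicate point to be the bookkeeping in the first paragraph: reading off from the end of the proof of Lemma \ref{ids-200} the precise $K$--finite vector $h'\in\mathcal H_K$ and the convention--dependent identification $\varphi=c_{h,h'}$ (tracking complex conjugates, the map $\beta\mapsto\beta^\vee$, unimodularity of $G$, and the fact that the relevant $K$--action is the right--translation one defining $\mathcal H$). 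Once $h'$ is fixed, Lemma \ref{ids-2} supplies everything else and the matching of $(\mathfrak g,K)$--modules and of their $L^1$--closures is purely formal.
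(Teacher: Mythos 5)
Your proposal is correct and follows essentially the paper's own route: the paper proves Lemma \ref{ids-203} precisely by pointing to the last part of the proof of Lemma \ref{ids-200}, where the added left $K$--finiteness lets one write $\varphi(x)=\int_G \overline{h'(y)}\varphi(yx)\,dy$ with $h'$ the orthogonal projection of $E^r_{\wit S}(\beta^\vee)$ onto $\mathcal H$, i.e.\ $\varphi=c_{\varphi,h'}$ with $h'\in\mathcal H_K$, exactly as you reconstruct. Your remaining steps (irreducibility of $\mathcal H_K$ giving $V=c_{h'}(\mathcal H_K)$, and then matching $L^1$--closures with the definition of $\mathcal B_{h'}$ from Lemma \ref{ids-2}(ii)) are the same formal bookkeeping the paper leaves implicit.
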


\vskip .2in
Finally, we prove the following theorem:

\begin{Thm}\label{ids-204} Let $\mathcal B$ the an irreducible closed admissible subrepresentation of  $L^1(G)$ under the right
  translations. Then, there exists unique up to (unitary or infinitesimal) equivalence an integrable discrete
  series $(\pi, \mathcal H)$ of $G$, $\delta\in \hat{K}$, and 
  a $K$--finite vector $h'\in \mathcal H$ such that
  $$
\text{the closure in $L^1(G)$ of} \ \  E_\delta^l\left(\mathcal B\right)=\mathcal B_{h'}.
  $$
  In particular, irreducible closed admissible subrepresentations of  $L^1(G)$ are infinitesimally equivalent to
  integrable discrete series. 
  \end{Thm}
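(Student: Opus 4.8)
The plan is to recognize $\mathcal B$, after applying a suitable left $K$-isotypic projection, as the $L^1$-closure of an irreducible $(\mathfrak g,K)$-module generated by a single function which is $\mathcal Z(\mathfrak g_{\mathbb C})$-finite, smooth, integrable, and $K$-finite on \emph{both} sides, and then to invoke Lemma \ref{ids-203}, which identifies such a closure with some $\mathcal B_{h'}$ attached to an integrable discrete series.

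First I would check that the $(\mathfrak g,K)$-module $\mathcal B_K$ of right $K$-finite vectors of $\mathcal B$ is an irreducible admissible $(\mathfrak g,K)$-module. Admissibility is part of the hypothesis. For irreducibility, a $(\mathfrak g,K)$-submodule $W\subseteq\mathcal B_K$ satisfies $W(\delta)=\overline W(\delta)$ for every $\delta\in\hat K$, since $W(\delta)$ is finite dimensional, hence closed; as $G$ is connected, $\overline W$ is a closed $G$-invariant subspace of $\mathcal B$, so $\overline W\in\{0,\mathcal B\}$ and therefore $W\in\{0,\mathcal B_K\}$. Consequently $\mathcal B_K$ has an infinitesimal character, so every $f\in\mathcal B_K$ is $\mathcal Z(\mathfrak g_{\mathbb C})$-finite; being moreover $K$-finite on the right, by Harish--Chandra's theorem (as in the proof of Lemma \ref{ids-2}(i)) there is $\alpha\in C_c^\infty(G)$ with $f=f\star\alpha$, so $f$ agrees almost everywhere with a function in $C^\infty(G)\cap L^1(G)$.

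Next I would left-smooth. Pick $f\in\mathcal B_K$, $f\neq 0$. The expansion $f=\sum_{\delta\in\hat K}E_\delta^l(f)$ converges in $L^1(G)$, so we may fix $\delta\in\hat K$ with $\varphi:=E_\delta^l(f)\neq 0$. Because $E_\delta^l$ is a bounded operator on $L^1(G)$ commuting with right translations, $\varphi$ lies in $C^\infty(G)\cap L^1(G)$, is $\mathcal Z(\mathfrak g_{\mathbb C})$-finite, is $K$-finite on the right, and is $\delta$-isotypic for the left regular action, hence $K$-finite on the left. Moreover $E_\delta^l$ restricts to a nonzero $(\mathfrak g,K)$-homomorphism on the irreducible module $\mathcal B_K$, hence is injective there; thus $E_\delta^l(\mathcal B_K)$ is irreducible and coincides with the $(\mathfrak g,K)$-module $V$ generated by $\varphi$ under right translations. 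Since $\mathcal B_K$ is dense in $\mathcal B$ (smooth vectors are dense, and $K$-finite vectors are dense in the smooth vectors by Lemma \ref{ids-1}(ii)) and $E_\delta^l$ is continuous, the closure of $V$ in $L^1(G)$ equals the closure of $E_\delta^l(\mathcal B)$ in $L^1(G)$.

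Finally I would apply Lemma \ref{ids-203} to $\varphi$: it produces an integrable discrete series $(\pi,\mathcal H)$, with $\mathcal H$ the $L^2$-closure of $V$, and $h'\in\mathcal H_K$ with $\overline{E_\delta^l(\mathcal B)}=\mathcal B_{h'}$, which is the asserted equality. The injectivity of $E_\delta^l$ on $\mathcal B_K$ gives $\mathcal B\simeq\overline{E_\delta^l(\mathcal B)}=\mathcal B_{h'}$ infinitesimally, while $\mathcal B_{h'}$ is infinitesimally equivalent to $(\pi,\mathcal H)$ by Lemma \ref{ids-2}(ii); hence $\mathcal B$ is infinitesimally equivalent to the integrable discrete series $(\pi,\mathcal H)$, which proves the last assertion, and since infinitesimally equivalent discrete series are unitarily equivalent, $(\pi,\mathcal H)$ is determined up to equivalence. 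I expect the only delicate point to be the first step, namely passing from topological irreducibility of $\mathcal B$ to algebraic irreducibility of $\mathcal B_K$ together with the smoothness and $\mathcal Z(\mathfrak g_{\mathbb C})$-finiteness of its $K$-finite vectors; everything afterward is a matter of tracking how $E_\delta^l$ interacts with the two commuting group actions and quoting Lemmas \ref{ids-203} and \ref{ids-2}.
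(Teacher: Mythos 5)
Your proposal is essentially the paper's own argument: produce a nonzero $\mathcal Z(\mathfrak g_{\mathbb C})$--finite, right $K$--finite smooth function in $\mathcal B\cap C^\infty(G)\cap L^1(G)$, apply a left projection $E_\delta^l$ to gain $K$--finiteness on the left, and then identify the $L^1$--closure of $E_\delta^l(\mathcal B)$ with some $\mathcal B_{h'}$ via Lemma \ref{ids-203} (equivalently, the arguments of Lemma \ref{ids-200}). Two of your intermediate justifications are thinner than they should be. First, the claim that $f=\sum_{\delta}E_\delta^l(f)$ converges in $L^1(G)$ is not established: $f=f\star\alpha$ makes $f$ a smooth vector for the \emph{right} regular representation, but nothing shows it is smooth for the \emph{left} regular representation, and for a general vector of a Banach representation of $K$ the Peter--Weyl expansion need not converge in norm. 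Fortunately all you need is the existence of some $\delta$ with $E_\delta^l(f)\neq 0$, which does hold (if every left isotypic projection vanished, then $l(\beta)f=0$ for all $\beta$ in the span of matrix coefficients of $K$, hence for an approximate identity on $K$, forcing $f=0$); the paper avoids the issue by first noting that $f$, being bounded and integrable, lies in $L^2(G)$ and using the orthogonal expansion there.

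Second, your opening step, algebraic irreducibility of $\mathcal B_K$, rests on the assertion that the closure of a $(\mathfrak g,K)$--submodule $W\subset\mathcal B_K$ is $G$--invariant because $G$ is connected. At that point you do not yet know that the vectors of $W$ are $\mathcal Z(\mathfrak g_{\mathbb C})$--finite (hence analytic), so this is exactly the nontrivial half of Harish--Chandra's correspondence between closed $G$--invariant subspaces and $(\mathfrak g,K)$--submodules for admissible Banach representations (cf.\ \cite{W1}, Section 3.4); it should be quoted as such rather than treated as immediate. The paper sidesteps irreducibility of $\mathcal B_K$ altogether: admissibility together with (\cite{hc}, Lemma 4) gives a $\delta$ with $\mathcal B(\delta)\cap\mathcal B^\infty\neq 0$, a finite--dimensional space stable under $\mathcal Z(\mathfrak g_{\mathbb C})$, which immediately produces the required nonzero $\mathcal Z(\mathfrak g_{\mathbb C})$--finite, right $K$--finite element of $C^\infty(G)\cap L^1(G)$. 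Your stronger use of irreducibility does buy a cleaner endgame (injectivity of $E_\delta^l$ on $\mathcal B_K$, so the module generated by $\varphi$ is irreducible and Lemma \ref{ids-203} applies verbatim), whereas the paper's appeal to the arguments of Lemma \ref{ids-200} is designed to handle an a priori non--irreducible module. With the two repairs above, your proof is correct and follows the same route as the paper.
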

\begin{proof} We use notation introduced in the statement and in the proof of Lemma \ref{ids-2}. The fact that
  $\mathcal B$ is admissible means that for each $\delta\in\hat{K}$,  we have 

  $$
  \mathcal B\left(\delta\right)\overset{def}{=}  E_\delta\left(\mathcal B\right)
  $$
  is finite dimensional. Let $\mathcal B^\infty$ be the space of smooth vectors in $\mathcal B$. Then, (\cite{hc}, Lemma 4) we
  have that the sum of vector spaces
  $$
  \sum_{\delta\in \hat{K}} \mathcal B\left(\delta\right)\cap \mathcal B^\infty
  $$
  is dense
  $\mathcal B$.

  Hence, there exists $\delta\in \hat{K}$ such that $\mathcal B\left(\delta\right)\cap \mathcal B^\infty\neq 0$.
  We get
  $$
  \mathcal B^\infty(\delta)\overset{def}{=} E_\delta\left(\mathcal B^\infty\right)=
  \mathcal B\left(\delta\right)\cap \mathcal B^\infty\neq 0.
  $$

  We  obtained a finite dimensional space different than $0$, invariant under  $\cal Z(\mathfrak g_{\mathbb C})$.
  Thus, $\mathcal B$ contains a non--zero $\varphi\in C^\infty(G)\cap L^1(G)$
  that is  $\cal Z(\mathfrak g_{\mathbb C})$--finite and $K$--finite on the right. As in the proof of Lemma \ref{ids-200} (see
  (\ref{ids-201})),  select $\delta\in \hat{K}$ such that $E_\delta^l(\varphi)\neq 0$. Then, using arguments in the proof
  of Lemma \ref{ids-200}, we obtain the theorem.
   \end{proof}

\section{Preparation for Application to Automorphic Forms}\label{aaf}

In this section we still assume that $G$ is a semisimple connected Lie group with finite center.
We assume that $\Gamma$ is a discrete subgroup of $G$.  Then, for $\varphi\in L^1(G)$ we can form
the Poincar\' e series $P_\Gamma(\varphi)(g):=\sum_{\gamma\in \Gamma}\varphi(\gamma\cdot
g)$.  Since 
$$
\int_{\Gamma\setminus G}|P_\Gamma(\varphi)(g)|dg\le  \int_{\Gamma\setminus 
G}\left(\sum_{\gamma\in \Gamma}|\varphi(\gamma\cdot g)|\right)dg=\int_{G}|\varphi(g)|dg<+\infty,
$$
the series $\sum_{\gamma\in \Gamma}\varphi(\gamma\cdot
g)$  converges absolutely almost everywhere and  
$P_\Gamma (\varphi)\in  L^1(\Gamma \setminus G)$.
It is obvious that the map
$$
\varphi\longmapsto P_\Gamma(\varphi)
$$
is a continuous $G$--equivariant map of Banach representations
\begin{equation}
L^1(G) \longrightarrow L^1(\Gamma \setminus G).
\end{equation}
This map is never zero. Indeed, select $\varphi\in C_c^\infty(G)$, $\varphi\neq 0$,  with a support in a sufficiently small
neighborhood $V$ of $1\in G$ which satisfies $VV^{-1}\cap \Gamma=\{1\}$. Then, we have the following:
$$
P_\Gamma(\varphi)(g):=\sum_{\gamma\in \Gamma}\varphi(\gamma\cdot g)=\varphi(g), \ \ g\in V.
$$
This proves the claim.

\vskip .2in
It is considerable harder to decide when $P_\Gamma(\varphi)\neq 0$. A sufficient condition is contained in the following lemma.

\begin{Lem}\label{aaf-1}  Let $\varphi\in L^1(G)$. Then, we have the following:
  \begin{itemize}
   \item[(i)]  Assume that there exists a compact neighborhood $C$ (i.e., an open set which closure is compact)
  in $G$ such that 
$$
\int_C |\varphi(g)|dg > \int_{G-C}
|\varphi(g)|dg\  \text{ and } \ \Gamma \cap C\cdot C^{-1}=\{1\}.
$$
Then, $P_\Gamma(\varphi)\neq 0$.
\item[(ii)]   Let $\Gamma_1\supset \Gamma_2\supset \dots$ be a sequence
  of discrete subgroups of $G$ such that   $\cap_{n\ge 1}
  \Gamma_n=\{1\}$.  Then, there exists $n_0$ depending on $\varphi$ such that
   $P_{\Gamma_n}(\varphi)\neq 0$ for $n\ge n_0$.
    \end{itemize}
\end{Lem}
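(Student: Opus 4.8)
The plan is to prove (i) by a direct $L^1$-estimate on the neighborhood $C$, and then to deduce (ii) by exhibiting, for each large enough $n$, a single neighborhood $C$ fulfilling the hypotheses of (i) for the group $\Gamma_n$.

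For (i), the decisive point is that the condition $\Gamma\cap C\cdot C^{-1}=\{1\}$ forces the translates $\gamma C$, $\gamma\in\Gamma$, to be pairwise disjoint: if $\gamma_1 C\cap\gamma_2 C\neq\emptyset$ then $\gamma_2^{-1}\gamma_1\in\Gamma\cap C\cdot C^{-1}=\{1\}$. Consequently $\gamma g\notin C$ whenever $g\in C$ and $\gamma\neq 1$; that is, the sets $\gamma C$ with $\gamma\neq 1$ are mutually disjoint and each disjoint from $C$, hence all contained in $G-C$. Since $\sum_{\gamma\in\Gamma}|\varphi(\gamma g)|$ is integrable over $\Gamma\backslash G$ (as recalled just before the lemma), it is finite for almost all $g\in G$, and the identity
$$
P_\Gamma(\varphi)(g)-\varphi(g)=\sum_{\gamma\in\Gamma,\ \gamma\neq 1}\varphi(\gamma g)
$$
holds for almost all $g$. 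Then, by the triangle inequality, Tonelli's theorem, left-invariance of Haar measure, and the disjointness just noted,
$$
\int_C\bigl|P_\Gamma(\varphi)(g)-\varphi(g)\bigr|\,dg\ \le\ \sum_{\gamma\neq 1}\int_{\gamma C}|\varphi(h)|\,dh\ \le\ \int_{G-C}|\varphi|\ <\ \int_C|\varphi|.
$$
By the reverse triangle inequality this gives $\int_C|P_\Gamma(\varphi)|\ge\int_C|\varphi|-\int_C|P_\Gamma(\varphi)-\varphi|>0$, and since $C$ maps injectively into $\Gamma\backslash G$ (again by the disjointness), it follows that $P_\Gamma(\varphi)\neq 0$ in $L^1(\Gamma\backslash G)$.

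For (ii) I may assume $\varphi\neq 0$, so $\|\varphi\|_1>0$. First I would choose an open, relatively compact $C\subset G$ with $\int_C|\varphi|>\tfrac12\|\varphi\|_1$, equivalently $\int_{G-C}|\varphi|<\tfrac12\|\varphi\|_1<\int_C|\varphi|$; this is possible since $|\varphi|\in L^1(G)$ and the sets $\{\|g\|\le R\}$ are compact (property (3) of the norm) and exhaust $G$, so one takes such a set with $R$ large and then an open relatively compact neighborhood of it. The closure of $C\cdot C^{-1}$ is compact, hence $\Gamma_1\cap\overline{C\cdot C^{-1}}$ is finite, say $\{1,\delta_1,\dots,\delta_m\}$. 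Since $\Gamma_1\supset\Gamma_2\supset\cdots$ and $\bigcap_{n\ge 1}\Gamma_n=\{1\}$, for each $j$ there is $n_j$ with $\delta_j\notin\Gamma_n$ for all $n\ge n_j$; put $n_0=\max_j n_j$. For $n\ge n_0$ we have $\Gamma_n\cap C\cdot C^{-1}\subset\Gamma_1\cap\overline{C\cdot C^{-1}}$ with all nontrivial elements removed, so $\Gamma_n\cap C\cdot C^{-1}=\{1\}$, and part (i) applied to $\Gamma_n$ and this $C$ yields $P_{\Gamma_n}(\varphi)\neq 0$.

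I do not expect a genuine obstacle. The manipulations in (i) --- interchanging the $\Gamma$-sum with the integral, changing variables by left translation, and reading $\int_C$ as an integral over a subset of $\Gamma\backslash G$ --- are immediate from nonnegativity (Tonelli) together with the disjointness of the $\gamma C$, while in (ii) the only mildly delicate points, the existence of a suitable open relatively compact $C$ and the finiteness of $\Gamma_1\cap\overline{C\cdot C^{-1}}$, are entirely standard. The one real idea is the disjointness of the translates $\gamma C$ exploited in (i).
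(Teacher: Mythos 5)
Your proof is correct; the paper itself only cites \cite{MuicMathAnn} (Theorem 4-1 for (i), Corollary 4-9 for (ii)), and your self-contained argument --- disjointness of the translates $\gamma C$ forced by $\Gamma\cap C\cdot C^{-1}=\{1\}$, the resulting estimate $\int_C|P_\Gamma(\varphi)-\varphi|\le\int_{G-C}|\varphi|<\int_C|\varphi|$, and in (ii) the choice of $C$ with $\int_C|\varphi|>\tfrac12\|\varphi\|_1$ together with finiteness of $\Gamma_1\cap\overline{C\cdot C^{-1}}$ --- is exactly the standard argument behind those cited results, and your deduction of (ii) from (i) coincides with the remark the author makes right after Theorem \ref{aaf-3}. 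The only caveat is the implicit hypothesis $\varphi\neq 0$ in (ii), which you correctly make explicit, since the statement is vacuously false for $\varphi=0$.
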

\begin{proof} The  claim (i) is (\cite{MuicMathAnn}, Theorem  4-1).
  Finally, the claim (iii) is (\cite{MuicMathAnn}, Corollary 4-9).
\end{proof}

\vskip .2in
The following lemma is a variation of a standard argument:

\begin{Lem}\label{aaf-2} Let $V$ be an open neighborhood of $1\in G$ such that $VV^{-1}\cap \Gamma=\{1\}$.
  Assume that $\beta\in C_c^\infty(G)$ such that $\supp{(\beta)}\subset V$,  and $\varphi\in L^1(G)$.
  Put $\psi=\beta \star \varphi$ ($\in C^\infty(G)\in
  L^1(G)$). Then,   $P_\Gamma(\psi)$ is a bounded continuous function on $G$ in $L^1(\Gamma \backslash G)$.
  More precisely, we have the following estimate:
  $$
  \left|\left|P_\Gamma(\psi)\right|\right|_\infty\le 
  \left|\left|\beta\right|\right|_\infty \left|\left|\varphi\right|\right|_1,
  $$
\end{Lem}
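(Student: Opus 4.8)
The plan is to unwind $\psi = \beta \star \varphi$ as an integral and then bound the Poincar\'e series termwise, exploiting the support condition on $\beta$ to guarantee that for each fixed $g$ at most one translate $\gamma g$ lands inside the relevant region. First I would write, for $g \in G$,
$$
\psi(g) = (\beta \star \varphi)(g) = \int_G \beta(gy^{-1}) \varphi(y)\, dy = \int_G \beta(y) \varphi(y^{-1}g)\, dy,
$$
so that $\psi$ is smooth (differentiation under the integral sign, $\beta \in C_c^\infty(G)$) and lies in $L^1(G)$ by Young's inequality, $\|\psi\|_1 \le \|\beta\|_1 \|\varphi\|_1$. Then for the Poincar\'e series,
$$
P_\Gamma(\psi)(g) = \sum_{\gamma \in \Gamma} \psi(\gamma g) = \sum_{\gamma \in \Gamma} \int_G \beta(y)\, \varphi(y^{-1}\gamma g)\, dy.
$$

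Next I would estimate the absolute value. Pulling the sum inside and using $|\beta(y)| \le \|\beta\|_\infty \cdot \mathbf{1}_{\supp(\beta)}(y)$,
$$
\left| P_\Gamma(\psi)(g) \right| \le \|\beta\|_\infty \int_{\supp(\beta)} \left( \sum_{\gamma \in \Gamma} \left| \varphi(y^{-1}\gamma g) \right| \right) dy.
$$
Here is the key point: for fixed $y \in \supp(\beta) \subset V$ and fixed $g$, the substitution $z = y^{-1}\gamma g$ is injective in $\gamma$ (trivially), and moreover the inner sum $\sum_{\gamma}|\varphi(y^{-1}\gamma g)|$ is, after the change of variables identifying $\Gamma$-orbits, dominated by $\int_G |\varphi(z)| \, dz$ in the same way as in the displayed computation at the start of Section \ref{aaf} — more precisely, $\sum_{\gamma \in \Gamma}|\varphi(\gamma h)|$ as a function of $h$ has integral over $\Gamma \backslash G$ equal to $\|\varphi\|_1$, but since we want a pointwise bound I would instead argue directly: the condition $VV^{-1} \cap \Gamma = \{1\}$ ensures the points $\{y^{-1}\gamma g : \gamma \in \Gamma\}$ lie in pairwise disjoint translates, hence $\sum_\gamma |\varphi(y^{-1}\gamma g)| \le \|\varphi\|_1$ fails pointwise in general — so the cleaner route is: bound $\left| P_\Gamma(\psi)(g)\right| \le \|\beta\|_\infty \int_G |\varphi(y^{-1}\gamma(y,g) g)|\,\cdots$. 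I expect the honest argument to be: write $P_\Gamma(\psi)(g) = \int_G \beta(y) P_\Gamma(\varphi)(y^{-1}g)\,dy$ is wrong because $y^{-1}$ does not commute with the left $\Gamma$-action; rather $\psi(\gamma g) = \int_G \beta(\gamma g y^{-1})\varphi(y)\,dy$, and then $P_\Gamma(\psi)(g) = \int_G \left(\sum_\gamma \beta(\gamma g y^{-1})\right)\varphi(y)\,dy$, and now for fixed $g, y$ the support condition $\supp(\beta)\subset V$ with $VV^{-1}\cap\Gamma=\{1\}$ forces \emph{at most one} $\gamma$ to contribute, so $\sum_\gamma |\beta(\gamma g y^{-1})| \le \|\beta\|_\infty$, giving
$$
\left| P_\Gamma(\psi)(g) \right| \le \|\beta\|_\infty \int_G |\varphi(y)|\, dy = \|\beta\|_\infty \|\varphi\|_1
$$
uniformly in $g$.

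Finally I would note that $P_\Gamma(\psi)$ is continuous: since $\psi = \beta\star\varphi$ with $\beta \in C_c^\infty(G)$, $\psi$ is left uniformly continuous, and the local finiteness of the sum (again from $VV^{-1}\cap\Gamma=\{1\}$, which makes the sum locally finite on $G$) lets one pass continuity to $P_\Gamma(\psi)$; membership in $L^1(\Gamma\backslash G)$ is the general fact recalled at the start of Section \ref{aaf}. The main obstacle is the bookkeeping in the second paragraph: one must be careful that it is the support condition on $\beta$ (not on $\varphi$) that collapses the sum to a single term, and that this happens for the sum $\sum_\gamma \beta(\gamma g y^{-1})$ with the variable of summation appearing on the left, which is exactly where $VV^{-1}\cap\Gamma=\{1\}$ is the right hypothesis; getting the placement of inverses and the order of the convolution correct is the only real subtlety.
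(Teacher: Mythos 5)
Your final estimate is correct and is in substance the paper's own argument: the paper writes $\psi(\gamma x)=\int_G\beta(\gamma x y)\varphi^\vee(y)\,dy$, notes that the integrand is supported in $y\in x^{-1}\gamma^{-1}V$, and uses that these sets are pairwise disjoint (precisely because $VV^{-1}\cap\Gamma=\{1\}$) to bound the sum of the integrals by $\|\beta\|_\infty\|\varphi\|_1$; you instead interchange sum and integral and observe that for fixed $(g,y)$ at most one $\gamma$ has $\beta(\gamma g y^{-1})\neq 0$, which is the same disjointness used on the dual side, so the two computations are equivalent bookkeeping. The only flawed point is your continuity remark: the sum $\sum_{\gamma}\psi(\gamma g)$ is \emph{not} locally finite in general, since $\psi=\beta\star\varphi$ need not be compactly supported when $\varphi$ is merely in $L^1(G)$; the hypothesis $VV^{-1}\cap\Gamma=\{1\}$ collapses the sum $\sum_\gamma\beta(\gamma g y^{-1})$ for each fixed $y$, not the sum of the $\psi$-translates. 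Continuity is still easy, e.g.\ by dominated convergence applied to $g\mapsto\int_G\bigl(\sum_\gamma\beta(\gamma g y)\bigr)\varphi^\vee(y)\,dy$ with dominating function $\|\beta\|_\infty|\varphi^\vee|$, or by approximating $\varphi$ in $L^1$ by functions in $C_c(G)$ (for which the series is locally finite) and using the very estimate you proved to get uniform convergence; the paper's proof, for what it is worth, only establishes the bound and leaves continuity implicit.
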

\begin{proof} We have
  $$
  \psi(\gamma x)=\int_G\beta(\gamma x y^{-1}) \varphi(y)dy= \int_G\beta(\gamma x y) \varphi^\vee(y)dy,
  \ \ x\in G, \ \ \gamma \in \Gamma.
  $$
  Note that $\beta(\gamma x y)\neq 0$ implies that
  $$
  y\in x^{-1}\gamma^{-1} V.
  $$
  Since $VV^{-1}\cap \Gamma=\{1\}$, the sets
  $$
  x^{-1}\gamma^{-1} V, \ \ \gamma\in \Gamma
  $$
  are disjoint. Thus, we get
  $$
  \left|P_\Gamma(\psi)(x)\right|\le 
  \sum_{\gamma\in \Gamma} \left|\psi(\gamma x)\right|\le \left|\left|\beta\right|\right|_\infty 
  \sum_{\gamma\in \Gamma} \int_{x^{-1}\gamma^{-1} V} \left|\varphi^\vee(y)\right|dy\le
  \left|\left|\beta\right|\right|_\infty \left|\left|\varphi\right|\right|_1.
  $$
 \end{proof}

\vskip .2in

Now, we prove the main result of the present section. It contains a novel approach to convergence of Poincar\' e series.
The reader now should review Lemma \ref{ids-2}.

\begin{Thm}\label{aaf-3} Assume that $G$ admits discrete series. Let $(\pi, \mathcal H)$ is an integrable discrete series of $G$.
  Let $h'\in \mathcal H_K$, $h'\neq 0$. Let $\Gamma\subset G$ be a discrete subgroup. Then, we have the following:
  \begin{itemize}
    \item[(i)] The map $\varphi \longmapsto P_\Gamma(\varphi)$ is a continuous 
  $G$--equivariant map from  the Banach representation $\mathcal B_{h'}$ into the unitary representation
      $L^2(\Gamma \backslash G)$. The image $P_\Gamma\left(\mathcal B_{h'}\right)$ is either zero, or it is an embedding
      and its closure      is an irreducible subspace unitary  equivalent to $(\pi, \mathcal H)$.
      
 \item[(ii)]  The smooth vectors 
   $\mathcal B_{h'}^\infty$ are mapped under $P_\Gamma$ into  the subspace of $\cal Z(\mathfrak g_{\mathbb C})$--finite vectors in
   $\cal A_{umg}(\Gamma\backslash G)$.
 \item[(iii)]    Furthermore,
      we may consider, $P_\Gamma: \ \mathcal B_{h'}\longrightarrow L^1(\Gamma \backslash G)
      \subset \mathcal S\left(\Gamma\backslash G\right)'$ (see Section \ref{src}). This map is a continuous map from a  Banach space
      $\mathcal B_{h'}$ into a locally convex space $\mathcal S\left(\Gamma\backslash G\right)'$. 
   
\item[(iv)]  Let $\Gamma_1\supset \Gamma_2\supset \dots$ be a sequence
  of discrete subgroups of $G$ such that   $\cap_{n\ge 1}  \Gamma_n=\{1\}$. Then, there exists
  $n_0=n_0(h')\ge 1$ such that $P_{\Gamma_n}: \mathcal B_{h'}\longrightarrow L^2(\Gamma_n \backslash G)$ is a continuous embedding
  for $n\ge n_0$.
    \end{itemize}
   \end{Thm}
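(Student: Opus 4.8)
The plan is to establish the four statements essentially in order, leveraging the structural results from Lemma~\ref{ids-2} and the Casselman machinery from Section~\ref{src}. The overall strategy is to first obtain the $L^2$-mapping property from the $L^1$-integrability of matrix coefficients of integrable discrete series, then promote this to the statement about Schwartz distributions and uniform moderate growth using Lemma~\ref{src-5}, and finally use the non-vanishing criterion of Lemma~\ref{aaf-1} together with injectivity of $d_{h''}$ to handle shrinking subgroups.

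\medskip

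\emph{Step (i): the $L^2$-mapping property.} Every $\varphi \in \mathcal B_{h'}$ is (up to a scalar and a choice of $h''$) a matrix coefficient $c_{h,h'}$ of the integrable discrete series $(\pi,\mathcal H)$; in particular $\varphi \in L^1(G) \cap L^2(G)$, the latter by the argument recalled in Lemma~\ref{ids-200}. Actually I want the stronger fact that $\varphi \in L^p(G)$ for some $p \in [1,2[$, uniformly, which follows from Lemma~\ref{ids-0000} applied to a $K$-finite matrix coefficient in $L^1 \subset L^p$. Then the standard Poincar\'e-series estimate, combined with the inequality $\left(\sum_\gamma |\varphi(\gamma g)|\right)^p \le \sum_\gamma |\varphi(\gamma g)|^p$ for $p \ge 1$ (as in Remark~\ref{convex}, or rather its discrete analogue, together with $\sum_\gamma |\varphi(\gamma g)|^{p}$ being summable since $p<2$ and $\varphi$ is $L^1$), gives $\|P_\Gamma(\varphi)\|_{L^2(\Gamma\backslash G)}^{p} \le C \|\varphi\|_{L^p(G)}^{p}$; hence $P_\Gamma : \mathcal B_{h'} \to L^2(\Gamma\backslash G)$ is continuous and $G$-equivariant. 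For the dichotomy, observe that $\mathcal B_{h'}$ is topologically irreducible, so the kernel of $P_\Gamma$ is either all of $\mathcal B_{h'}$ or zero; if zero, the closure of the image is a nonzero closed $G$-invariant subspace of $L^2(\Gamma\backslash G)$ which is a quotient (via the map) of the irreducible $\mathcal B_{h'}$, hence, being unitary, is irreducible and unitarily equivalent to $(\pi,\mathcal H)$, the unitary completion of $\mathcal H_K$.

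\medskip

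\emph{Steps (ii) and (iii): Schwartz distributions and uniform moderate growth.} Part (iii) is immediate: any $F \in L^1(\Gamma\backslash G)$ defines an element of $\mathcal S(\Gamma\backslash G)'$ by integration (as noted in the proof of Lemma~\ref{src-5}), and the inclusion $L^1(\Gamma\backslash G) \hookrightarrow \mathcal S(\Gamma\backslash G)'$ is continuous, so composing with the continuous map $P_\Gamma : \mathcal B_{h'} \to L^1(\Gamma\backslash G)$ gives the claim. For (ii), take $\varphi \in \mathcal B_{h'}^\infty$. By Lemma~\ref{ids-2}(vii), $\varphi = c_{h,h'}$ for some $h \in \mathcal H^\infty$, and by Lemma~\ref{ids-1}(i) we may write $\varphi = \sum_i r(\alpha_i)\varphi_i$ with $\alpha_i \in C_c^\infty(G)$ and $\varphi_i \in \mathcal B_{h'} \subset L^1(G)$, i.e.\ $\varphi = \sum_i \varphi_i \star \alpha_i^\vee$. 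Since $P_\Gamma$ intertwines the right regular action and convolution by $C_c^\infty(G)$, we get $P_\Gamma(\varphi) = \sum_i \big(P_\Gamma(\varphi_i)\big)\star \alpha_i^\vee$ with $P_\Gamma(\varphi_i) \in L^1(\Gamma\backslash G)$; now Lemma~\ref{src-5} shows each summand is (a.e.) a function in $\mathcal A_{umg}(\Gamma\backslash G)$, hence so is $P_\Gamma(\varphi)$. Finally, $\mathcal Z(\mathfrak g_{\mathbb C})$-finiteness: by Lemma~\ref{ids-2}(ii)(c), every vector in $\mathcal B_{h'}^\infty$ transforms under $\mathcal Z(\mathfrak g_{\mathbb C})$ by the infinitesimal character $\chi_\pi$; since $P_\Gamma$ commutes with the $\mathfrak g$-action, $P_\Gamma(\varphi)$ satisfies $z.P_\Gamma(\varphi) = \chi_\pi(z) P_\Gamma(\varphi)$ as a distribution, hence as a function in $\mathcal A_{umg}(\Gamma\backslash G)$ (where the action is the honest one). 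Thus $P_\Gamma(\varphi)$ lies in the $\mathcal Z(\mathfrak g_{\mathbb C})$-finite part of $\mathcal A_{umg}(\Gamma\backslash G)$.

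\medskip

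\emph{Step (iv): shrinking subgroups.} By Lemma~\ref{ids-1}(i) write a fixed nonzero $\varphi_0 \in \mathcal B_{h'}$ as a finite sum $\sum_i \psi_i \star \alpha_i$; since $\mathcal B_{h'}$ is irreducible it is generated topologically by any nonzero vector, so it suffices to find a single nonzero vector whose $P_{\Gamma_n}$-image is nonzero for large $n$, because then $\ker P_{\Gamma_n}$, being a proper closed $G$-submodule, is zero, i.e.\ $P_{\Gamma_n}$ is injective; continuity and injectivity plus irreducibility make it a (topological) embedding onto an irreducible subspace as in (i). To produce such a vector: pick any $0 \ne \varphi \in \mathcal B_{h'}$, say $\varphi = c_{h,h'}$; then $\varphi \in L^1(G)$ and $\int_G |\varphi| > 0$, so for a large enough compact neighborhood $C$ we have $\int_C |\varphi| > \int_{G-C}|\varphi|$; since $\cap_n \Gamma_n = \{1\}$ and each $\Gamma_n$ is discrete, for $n$ large enough $\Gamma_n \cap C\cdot C^{-1} = \{1\}$ (standard: a discrete subgroup meets the fixed compact set $C\cdot C^{-1}$ in a finite set, and these finite sets decrease to $\{1\}$). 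Then Lemma~\ref{aaf-1}(i) gives $P_{\Gamma_n}(\varphi) \ne 0$. (Alternatively one invokes Lemma~\ref{aaf-1}(ii) directly with this $\varphi$.) Combining with the $L^2$-continuity from (i) applied to each $\Gamma_n$ finishes the argument.

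\medskip

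\emph{Main obstacle.} The technically delicate point is (ii): one must be careful that the object $P_\Gamma(\varphi)$, a priori only a tempered distribution, really is represented by an honest smooth function of uniform moderate growth and that the $\mathcal Z(\mathfrak g_{\mathbb C})$-finiteness passes from the distributional level to the function level. This is exactly what Lemma~\ref{src-5} (resting on Casselman's identification of the G\r arding space of $\mathcal S(\Gamma\backslash G)'$ with $\mathcal A_{umg}(\Gamma\backslash G)$, Lemma~\ref{src-4}) is designed to handle, and the decomposition $\varphi = \sum_i \varphi_i \star \alpha_i^\vee$ coming from Dixmier--Malliavin (Lemma~\ref{ids-1}(i)) is the bridge that lets us apply it. A secondary subtlety is the uniform $L^p$-estimate for the Poincar\'e series in (i), where one needs $p<2$ strictly so that $\sum_\gamma |\varphi(\gamma g)|^p$ remains summable and the convexity bound is available; this is precisely why Lemma~\ref{ids-0000} is invoked rather than mere $L^2$-membership.
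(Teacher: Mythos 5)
Your parts (ii), (iii) and (iv) follow essentially the paper's own route (Dixmier--Malliavin decomposition plus Lemma \ref{src-5} for (ii), integration against $L^1(\Gamma\backslash G)$ for (iii), the non-vanishing criterion of Lemma \ref{aaf-1} plus irreducibility of $\mathcal B_{h'}$ for (iv)), but your proof of (i) has a genuine gap, and (i) is the pillar on which the rest stands. The inequality you invoke, $\left(\sum_\gamma |\varphi(\gamma g)|\right)^p \le \sum_\gamma |\varphi(\gamma g)|^p$ for $p\ge 1$, is false: for $p\ge 1$ the map $x\mapsto x^p$ is superadditive on $[0,\infty)$, so the inequality runs the other way (the convexity statement in Remark \ref{convex} concerns averages against a probability weight, not sums of unnormalized terms). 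Even with the inequality corrected you would only obtain a lower bound for $\int_{\Gamma\backslash G}|P_\Gamma(\varphi)|^p$, and in any case membership of $\varphi$ in $L^p(G)$ for some $p<2$ does not by itself give $P_\Gamma(\varphi)\in L^2(\Gamma\backslash G)$ with a norm bound; the invocation of Lemma \ref{ids-0000} is a red herring here (in the paper it is used only for the identification of $\mathcal B_{h'}^\infty$ in Lemma \ref{ids-2}(vii)).

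The mechanism the paper actually uses, and which your proposal is missing, is the uniform reproducing identity of Lemma \ref{ids-2}(iii): one may choose a single $\beta\in C_c^\infty(G)$ with $\supp(\beta)$ inside a neighborhood $V$ of $1$ satisfying $VV^{-1}\cap\Gamma=\{1\}$ and with $\beta\star\varphi=\varphi$ for \emph{all} $\varphi\in\mathcal B_{h'}$. Lemma \ref{aaf-2} then yields the sup-norm bound $\|P_\Gamma(\varphi)\|_\infty\le\|\beta\|_\infty\|\varphi\|_1$, and combining this with the trivial $L^1$ bound $\|P_\Gamma(\varphi)\|_{L^1(\Gamma\backslash G)}\le\|\varphi\|_1$ gives $\|P_\Gamma(\varphi)\|_{L^2(\Gamma\backslash G)}\le\|\beta\|_\infty^{1/2}\|\varphi\|_1$, which is the continuity you need. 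A secondary point: your claim that the closure of the (nonzero) image in $L^2(\Gamma\backslash G)$ is irreducible ``being unitary'' is too quick; the paper first proves the closure is admissible, by showing each $\delta$-isotypic component of the closure equals the (finite-dimensional, hence closed) image of $\mathcal B_{h'}(\delta)$, and only then compares the underlying $(\mathfrak g,K)$-modules to conclude unitary equivalence with $(\pi,\mathcal H)$. You should repair (i) along these lines; once that is done, your treatment of (ii)--(iv) goes through as in the paper.
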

\begin{proof} Let us prove (i). Let $V$ be an open neighborhood of $1\in G$ such that $VV^{-1}\cap \Gamma=\{1\}$.
By Lemma \ref{ids-2} (iii),   there exists $\beta\in C_c^\infty(G)$,  $\supp{\ (\beta)}\subset V$, such that 
   $$
    \beta\star \varphi=  l(\beta)\varphi= \varphi, \ \ \varphi\in \mathcal B_{h'}.
    $$

    By Lemma \ref{aaf-2}, we have 
 $$
  \left|\left|P_\Gamma(\varphi)\right|\right|_\infty\le 
  \left|\left|\beta\right|\right|_\infty \left|\left|\varphi\right|\right|_1,
  $$
 Thus, we have the following:
  $$
 \int_{\Gamma\backslash G} \left|P_\Gamma(\varphi)(x)\right|^2 dx \le
 \left|\left|\beta\right|\right|_\infty \left|\left|\varphi\right|\right|_1
 \int_{\Gamma\backslash G} \left|P_\Gamma(\varphi)(x)\right| dx \le
  \left|\left|\beta\right|\right|_\infty \left|\left|\varphi\right|\right|_1^2,
 $$
  by the computation given at the beginning of this section. Hence
  $$
  \left( \int_{\Gamma\backslash G} \left|P_\Gamma(\varphi)(x)\right|^2 dx\right)^{1/2} \le
 \left|\left|\beta\right|\right|_\infty^{1/2}  \left|\left|\varphi\right|\right|_1.
 $$
 This shows that the map $\varphi\longmapsto P_\Gamma(\varphi)$ is well--defined and continuous. It is obviously
 $G$--invariant.

 Let us assume that $\mathcal B_0 \overset{def}{=}P_\Gamma\left(\mathcal B_{h'}\right)\neq 0$.
 Since $\mathcal B_{h'}$ is irreducible (see Lemma \ref{ids-2}, (ii)),
 $P_\Gamma$ must be an embedding.   Let $\mathcal H_0$ be its closure in
  $L^2(\Gamma \backslash G)$. It is obviously $G$--invariant. We show that $\mathcal H_0$ is admissible.  
 Indeed, since $\mathcal B_0$ is dense in $\mathcal H_0$ and the projector $E_\delta^r$ is continuous (see Section \ref{ids})
 for each $\delta\in \hat{K}$, we obtain
 that
  $$
 P_\Gamma\left(\mathcal B_{h'}(\delta)\right)= \mathcal B_{0}(\delta)
 $$
 is dense in
$$
 \mathcal H_{0}(\delta).
 $$
 But $\mathcal B_{h'}$ is admissible (see Lemma \ref{ids-2}, (ii)). So each $\mathcal B_{0}(\delta)$ is finite dimensional.
 Thus, closed in $\mathcal H_{0}(\delta)$.  Hence
 $$
 \mathcal H_{0}(\delta)=\mathcal B_{0}(\delta).
 $$
 This proves that $\mathcal H_{0}$ is admissible. Also,  via the map $P_\Gamma$ we obtain that
 $(\mathfrak g, K)$ modules $(\mathcal B_{0})_K$ and $(\mathcal H_{0})_K$. Next,
 by Lemma \ref{ids-2} (ii),  we obtain that $(\mathfrak g, K)$ modules $(\mathcal B_{0})_K$ are $\mathcal H_K$
 are isomorphic. Thus,  we obtain (i).

 Now, we prove (ii).  Let $\varphi\in \mathcal B_{h'}^\infty$. By Lemma \ref{ids-1} (i), there exists
 $\varphi_1, \ldots, \varphi_l\in \mathcal B^\infty$, and  $\alpha_1, \ldots, \alpha_l\in C_c^\infty(G)$ such that
 $$
 \varphi= \sum_{i=1}^l \varphi_i \star \alpha_i.
 $$
 Then, by direct computation
 $$
 P_\Gamma(\varphi)= \sum_{i=1}^l P_\Gamma(\varphi_i) \star \alpha_i.
 $$
 Next, Lemma \ref{src-5} implies that
 $$
 P_\Gamma(\varphi_i) \star \alpha_i \in \cal A_{umg}(\Gamma\backslash G),
 $$
 for all $i$. This proves  $P_\Gamma(\varphi) \in \cal A_{umg}(\Gamma\backslash G)$. To complete the proof of (ii) we need to prove
 that  $P_\Gamma(\varphi)$ is $\cal Z(\mathfrak g_{\mathbb C})$--finite. Indeed, by (i), the map  $P_\Gamma: \ \mathcal B_{h'}
 \longrightarrow L^2(\Gamma \backslash G)$ is continuous. Then, $P_\Gamma$ intertwines smooth representations
 $ \mathcal B^\infty_{h'}
 \longrightarrow \left(L^2(\Gamma \backslash G)\right)^\infty$. This implies that $P_\Gamma$ commutes with the action of
 $\mathcal U(\mathfrak g_{\mathbb C})$. Thus, since $\varphi$ is smooth and $\cal Z(\mathfrak g_{\mathbb C})$--finite,
 $P_\Gamma(\varphi)$ satisfies the same.

 Now, we prove (iii). The reader should refer to Section \ref{src} for notation and
 results used here. Let us denote by $\mathcal P_\Gamma (\varphi)$ the functional
 $f\longmapsto \int_{\Gamma\backslash G} P_\Gamma(\varphi)(x)f(x)dx$, $f\in \mathcal S\left(\Gamma\backslash G\right)$.
 The reader should now refer to the description of topology on $\mathcal S\left(\Gamma\backslash G\right)'$
 (see the description of topology after Definition \ref{src-2}).
 Let $B\subset \mathcal S\left(\Gamma\backslash G\right)$ be a bounded set. Then, 
 there exists $M_B>0$ such that
 $$
 \sup_{f\in B} \ ||f||_{1, -1}\le M_B.
 $$
 This implies
 $$
 |f(x)|\le M_B \cdot ||x||_{\Gamma\setminus G}^{-1}, \ \ f\in B, \ g\in G.
 $$
 Since $||x||_{\Gamma\setminus G}\ge 1$ for all $x\in G$, we obtain
 $$
 |f(x)|\le M_B, \ \ f\in B, \ g\in G.
 $$

 Now, we prove the continuity of the map
 \begin{align*}
   ||\mathcal P_\Gamma (\varphi)||_B &=\sup_{f\in B} \ \left|\int_{\Gamma\backslash G} P_\Gamma(\varphi)(x)f(x)dx\right|\\
   &\le \sup_{f\in B} \ \int_{\Gamma\backslash G}  \left|P_\Gamma(\varphi)(x) \right| \left|f(x)\right|dx\\
   &\le M_B   \int_{\Gamma\backslash G}  \left|P_\Gamma(\varphi)(x)\right| dx\\
   &\le M_B  \int_G\left|\varphi(x)\right| dx= M_B \ ||\varphi||_1.
 \end{align*}

 Finally, we prove (iv). We consider a $K$--finite matrix coefficient $c_{h', h'}\in L^1(G)$ defined in Lemma \ref{ids-2}.
 Then, by Lemma \ref{aaf-1} (ii), there exists  $n_0=n_0(h')\ge 1$ such that $P_{\Gamma_n}\left(c_{h', h'}\right)\neq 0$.
 Thus, for such $n$, the map  $P_{\Gamma_n}: \mathcal B_{h'}\longrightarrow L^2(\Gamma_n \backslash G)$ is non--zero.
 But, by Lemma \ref{ids-2} (ii),
 $\mathcal B_{h'}$ is a closed irreducible subrepresentation of $L^1(G)$ (under the right--translations), and $P_\Gamma$ is
 continuous on $\mathcal B_{h'}$. Hence the claim in (iv). 
\end{proof}

\vskip .2in
By Lemma \ref{aaf-1} (i) the number $n_0=n_0(h')$ can be computed as follows.  Since $c_{h', h'}\in L^1(G)$, there exists
a compact neighborhood $C$ (i.e., an open set which closure is compact)  in $G$ such that 
$$
\int_C |c_{h', h'}(g)|dg > \int_{G-C}
|c_{h', h'}(g)|dg.
$$
Since $G$ is countable at infinity such $C$ exists. Now, since $\cap_{n\ge 1}  \Gamma_n=\{1\}$, there exists $n_0=n_0(h')\ge 1$
such that $\Gamma_n\cap C\cdot  C^{-1}=\{1\}$ for $n\ge n_0$. Now, we apply Lemma \ref{aaf-1} (i) to see that
$P_{\Gamma_n}(c_{h', h'})\neq 0$. When $G$ is a group of $\mathbb R$--points of a semisimple algebraic group defined
over $\mathbb Q$ and $\Gamma_n$ is a sequence of congruence subgroups, that was indicated in (\cite{MuicMathAnn}, Theorem 6.1).
Explicit computations for $G=SL_2(\mathbb R)$ were performed in \cite{MuicJNT}.

\vskip .2in
We make the following observation:
\begin{Rem} Let $\Gamma \subset G$ be a discrete subgroup. Let $h'\in \mathcal H_K$, $h'\neq 0$. Assume that 
$P_\Gamma\left(\mathcal B_{h'}\right)=0$.  Then, by Lemma \ref{aaf-1} (i), for every 
  compact neighborhood $C$ (i.e., an open set which closure is compact) in $G$ such that
  $\Gamma \cap C\cdot C^{-1}=\{1\}$ we have 
  
$$
\int_C |\varphi(g)|dg \le  \frac12\int_{G}
|\varphi(g)|dg, \ \ \varphi\in \mathcal B_{h'}.
$$
\end{Rem}

\vskip .2in
We end this section with a comment. 
So far, we have studied explicitly constructed automorphic forms $P_\Gamma(\varphi)$ and when they are non--zero. On the other
hand, given integrable discrete series $(\pi, \mathcal H)$ of $G$, then we have the following classical observation
which is in \cite{milicic2}.

\begin{Lem}[Mili\v ci\' c] \label{aaf-4}
  Assume that $G$ admits discrete series. Let $(\pi, \mathcal H)$ be an  integrable discrete series representation of $G$.
  Then, the orthogonal complement of the algebraic sum
  $$
  \sum_{h'\in \mathcal H_K} P_\Gamma\left(\mathcal B_{h'}\right)
  $$
  in $L^2(\Gamma \backslash G)$ does not contain a $G$--invariant closed subspace equivalent to $\pi$.
  In other words, $\pi$--isotypic component in  $L^2(\Gamma \backslash G)$ is given by the closure of
  the algebraic sum $\sum_{h'\in \mathcal H_K} P_\Gamma\left(\mathcal B_{h'}\right)$.
\end{Lem}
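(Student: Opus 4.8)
The plan is to argue by contradiction. Suppose the orthogonal complement of $\Sigma:=\sum_{h'\in\mathcal H_K}P_\Gamma(\mathcal B_{h'})$ in $L^2(\Gamma\backslash G)$ contains a nonzero closed $G$--invariant subspace $\mathcal W$ on which the right regular representation is unitarily equivalent to $(\pi,\mathcal H)$; fix a unitary $G$--isomorphism $U\colon\mathcal H\to\mathcal W$, and I will derive $\mathcal W=0$, a contradiction. Note first that by Theorem \ref{aaf-3} (i) every $P_\Gamma(\mathcal B_{h'})$ is either $0$ or unitarily equivalent to $\pi$, so $\overline{\Sigma}$ lies in the $\pi$--isotypic component of $L^2(\Gamma\backslash G)$; granting the first assertion, the orthogonal complement of $\overline{\Sigma}$ inside that isotypic component is a $G$--invariant closed subspace of it and hence, if nonzero, would contain a copy of $\pi$ --- impossible --- so it vanishes, which is the ``in other words'' statement.

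\emph{Unfolding and a distribution vector for $\mathcal W$.} Fix $h'\in\mathcal H_K\setminus\{0\}$. By Lemma \ref{ids-2} (iii) there are an open neighbourhood $V$ of $1$ with $VV^{-1}\cap\Gamma=\{1\}$ and $\beta\in C_c^\infty(G)$, $\supp(\beta)\subset V$, with $l(\beta)\varphi=\varphi$ for all $\varphi\in\mathcal B_{h'}$. Applying the estimate in the proof of Lemma \ref{aaf-2} to $\psi=\beta\star\varphi=\varphi$ gives $\sum_{\gamma\in\Gamma}|\varphi(\gamma x)|\le\|\beta\|_\infty\|\varphi\|_1$ for $\varphi\in\mathcal B_{h'}$, so $\sum_{\gamma}|\varphi(\gamma\,\cdot\,)|$ lies in $L^1(\Gamma\backslash G)\cap L^\infty(\Gamma\backslash G)\subset L^2(\Gamma\backslash G)$; hence, for each $\psi\in L^2(\Gamma\backslash G)$, the function $\varphi\,\overline{\psi}$ is integrable on $G$ (Cauchy--Schwarz on $\Gamma\backslash G$) and Fubini yields
$$\langle P_\Gamma(\varphi),\psi\rangle_{L^2(\Gamma\backslash G)}=\int_G\varphi(x)\,\overline{\psi(x)}\,dx,\qquad \varphi\in\mathcal B_{h'},\ \ \psi\in L^2(\Gamma\backslash G).$$
Next, for $h\in\mathcal H_K$ the function $U(h)$ is $K$--finite on the right and transforms under $\mathcal Z(\mathfrak g_{\mathbb C})$ by $\chi_\pi$, so $U(h)=U(h)\star\alpha$ for some $\alpha\in C_c^\infty(G)$ by \cite{hc}, Theorem 1; hence $U(h)$ is continuous and $|U(h)(\Gamma\cdot1)|=|\int_{\Gamma\backslash G}U(h)(y)\,P_\Gamma(\alpha^\vee)(y)\,dy|\le\|U(h)\|_2\,\|P_\Gamma(\alpha^\vee)\|_2$. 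Therefore $\lambda\colon h\mapsto U(h)(\Gamma\cdot1)$ extends to a continuous functional on $\mathcal H^\infty$, i.e. $\lambda\in\mathcal H^{-\infty}$ (the dual of $\mathcal H^\infty$), with $\lambda\ne0$ because $\mathcal W\ne0$; and for $h\in\mathcal H^\infty$ one has $U(h)(g)=(r(g)U(h))(\Gamma\cdot1)=U(\pi(g)h)(\Gamma\cdot1)=\langle\pi(g)h,\lambda\rangle$.

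\emph{The key computation and conclusion.} Fix $h,h'\in\mathcal H_K$ with $h\ne0$; by Lemma \ref{ids-2} (i),(ii), $c_{h,h'}\in\mathcal B_{h'}$. Combining the unfolding identity with $U(h)(x)=\langle\pi(x)h,\lambda\rangle$ gives
$$\langle P_\Gamma(c_{h,h'}),U(h)\rangle_{L^2(\Gamma\backslash G)}=\int_G c_{h,h'}(x)\,\overline{\langle\pi(x)h,\lambda\rangle}\,dx=\langle\lambda,\ \pi(\overline{c_{h,h'}})h\rangle,$$
where $\pi(\overline{c_{h,h'}})h=\int_G\overline{c_{h,h'}(x)}\,\pi(x)h\,dx$; since, for all $w\in\mathcal H$, $\langle\pi(\overline{c_{h,h'}})h,w\rangle=\int_G\overline{c_{h,h'}(x)}\langle\pi(x)h,w\rangle\,dx=\tfrac{1}{d(\pi)}\|h\|^2\langle h',w\rangle$ by the Schur orthogonality relation (\ref{ids-2000}), we get $\pi(\overline{c_{h,h'}})h=\tfrac{\|h\|^2}{d(\pi)}\,h'$, and hence
$$\langle P_\Gamma(c_{h,h'}),U(h)\rangle_{L^2(\Gamma\backslash G)}=\frac{\|h\|^2}{d(\pi)}\,\overline{\langle h',\lambda\rangle}.$$
But $U(h)\in\mathcal W$ is orthogonal to $P_\Gamma(\mathcal B_{h'})$, so the left--hand side is $0$; taking $h\ne0$ yields $\langle h',\lambda\rangle=0$ for every $h'\in\mathcal H_K$. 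Since $\mathcal H_K$ is dense in $\mathcal H^\infty$ and $\lambda$ is continuous, $\lambda=0$, so $U\equiv0$ and $\mathcal W=0$, a contradiction.

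The \textbf{main obstacle} is the functional--analytic bookkeeping with $\lambda$: that $U$ genuinely factors through a continuous functional on $\mathcal H^\infty$, and --- above all --- the interchange $\int_G c_{h,h'}(x)\overline{\langle\pi(x)h,\lambda\rangle}\,dx=\langle\lambda,\pi(\overline{c_{h,h'}})h\rangle$, which is delicate because $U(h)$ is only $\Gamma$--periodic (not in $L^2(G)$) and $c_{h,h'}$ only in $L^1(G)$, so neither classical Schur orthogonality over $G$ nor convergence in $\mathcal H^\infty$ is directly available. I would handle it by using \cite{hc}, Theorem 1, to write $c_{h,h'}=\beta_1\star c_{h,h'}\star\alpha$ with $\alpha,\beta_1\in C_c^\infty(G)$ (legitimate since $c_{h,h'}$ is $\mathcal Z(\mathfrak g_{\mathbb C})$--finite and $K$--finite on both sides): then $\pi(\overline{c_{h,h'}})h$ is a smoothing of an honestly convergent $\mathcal H$--valued Bochner integral, and the pairing with $\lambda$ reduces, via the smoothed vector $\pi(\overline{\beta_1})^{*}\lambda\in\mathcal H^\infty$, to an ordinary inner product together with Fubini. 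It is worth stressing that the only genuinely analytic input --- Schur orthogonality --- is used solely for $K$--finite vectors, hence is exactly the relation (\ref{ids-2000}) already at hand; the real point is simply to pair $P_\Gamma(c_{h,h'})$ against $U(h)$ for the \emph{same} vector $h$, which collapses the matrix--coefficient integral to $\tfrac{\|h\|^2}{d(\pi)}h'$ and thereby exposes $\lambda$.
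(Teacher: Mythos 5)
Your overall strategy is sound and rests on the same two pillars as the paper's proof: the unfolding identity $\langle P_\Gamma(\varphi),\psi\rangle_{L^2(\Gamma\backslash G)}=\int_G\varphi\,\overline{\psi}\,dx$ (this is exactly (\ref{aaf-5})) and Schur orthogonality (\ref{ids-2000}) applied to a $K$--finite matrix coefficient. Where you diverge is the device for passing from ``$K$--finite information'' to the vanishing of the whole subspace $\mathcal W$: the paper fixes $\varphi\in W$ and shows that the function $r(\overline f).\varphi$, for $f\in\mathcal B_{h'}$, has \emph{all} $\mathcal U(\mathfrak g_{\mathbb C})$--derivatives at $1$ equal to zero (using Lemma \ref{ids-2} (iv),(v), i.e. $l(u)f\in\mathcal B_{d\pi(u)h'}$, so the hypothesis applies to the derivatives as well), and then invokes real--analyticity of this $\cal Z(\mathfrak g_{\mathbb C})$--finite, $K$--finite function plus connectedness of $G$ to get $r(\overline f).\varphi\equiv 0$, contradicting $\pi(\overline{c_{h',h'}})h'\neq 0$. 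You instead package evaluation at the identity coset as a distribution vector $\lambda$, compute the pairing via Schur to get $\langle h',\lambda\rangle=0$ for $h'\in\mathcal H_K$, and conclude by density of $\mathcal H_K$ in $\mathcal H^\infty$.

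The gap is precisely where you flag it, and it is real as written: the continuity of $\lambda$ on $\mathcal H^\infty$ does not follow from the displayed bound. The $\alpha$ produced by Harish--Chandra's theorem depends on the $K$--types (and $\chi_\pi$), so $|U(h)(1)|\le \|U(h)\|_2\,\|P_\Gamma(\alpha^\vee)\|_2$ only gives boundedness of $\lambda$ on each $K$--isotypic subspace, with a constant depending on the type; it is not a bound by finitely many seminorms of $\mathcal H^\infty$, which is what ``$\lambda\in\mathcal H^{-\infty}$'' and your final density argument require. Moreover, your proposed repair of the interchange via the ``smoothed vector $\pi(\overline{\beta_1})^{*}\lambda$'' presupposes exactly this continuity (and that vector is a priori only in $\mathcal H$). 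Two standard ways to close it: (1) prove the Sobolev--type estimate $|F(1)|\le C\sum_{\deg u\le k}\|r(u)F\|_{L^2(\Gamma\backslash G)}$, $k>\tfrac12\dim G$, for smooth vectors $F$ of $L^2(\Gamma\backslash G)$, using a neighborhood of $1$ that injects into $\Gamma\backslash G$; this makes $\lambda$ genuinely continuous on $\mathcal W^\infty\simeq\mathcal H^\infty$. Or (2), more economically and closer to the paper: you do not need $\lambda$ on $\mathcal H^\infty$ at all, because $\mathcal H_K$ is stable under $d\pi(\mathcal U(\mathfrak g_{\mathbb C}))$; your conclusion $U(h')(1)=0$ for all $h'\in\mathcal H_K$ therefore already gives $r(u)U(h')(1)=U(d\pi(u)h')(1)=0$ for all $u$, and since each $U(h')$ is $\cal Z(\mathfrak g_{\mathbb C})$--finite and $K$--finite, hence real--analytic, and $G$ is connected, $U(h')\equiv 0$ and $\mathcal W=0$. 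This last step is exactly the paper's mechanism, so once the gap is repaired the two arguments essentially coincide; a similar continuity/Fubini care (both sides being continuous representatives of the same $L^2$--class) is also needed to identify $\bigl(r(\overline{c_{h,h'}})U(h)\bigr)(1)$ with $U\bigl(\pi(\overline{c_{h,h'}})h\bigr)(1)$, which the paper handles by its discussion of $r(f).\varphi\in C^\infty(G)\cap L^2(\Gamma\backslash G)$ before (\ref{aaf-5}).
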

\begin{proof} We start from the following observation.  A function $f\in L^1(G)$ acts as a bounded operator on the
  unitary representation $L^2(\Gamma \backslash G)$:
  $$
  r(f).\varphi(x)= \int_G \varphi(xy) f(y) dy=  \int_G \varphi(y) f^\vee(y^{-1}x) dy= \varphi\star f^\vee(x). 
  $$
  Assuming that $f$ is a smooth vector in the Banach representation $L^1(G)$ under the left translations, we obtain that
  $f^\vee$  is a smooth vector in the Banach representation $L^1(G)$ under the right translations. Therefore,  
  the resulting function satisfies 
  $$
  r(f).\varphi\in C^\infty(G)\cap L^2(\Gamma \backslash G).
  $$ 
  The details are left to the reader but a hint is given in the computation given after the proof of Lemma \ref{ids-1}.
  Therefore, the value  $r(f).\varphi(1)$ is well--defined. 
  
  Based on this, we have the following formula for the inner product:
  \begin{equation}\label{aaf-5}
  \langle P_\Gamma(f), \varphi \rangle=\int_{\Gamma\backslash G} \varphi(y) \  \overline{P_\Gamma(f)(y)} dy=
  \int_{G}  \varphi(y)\overline{f(y)} dy= r(\overline{f}).\varphi(1)
  \end{equation}
  since $\varphi$ is $\Gamma$--invariant on the right.
  
\medskip
  Now, we show that in the orthogonal complement in $L^2(\Gamma \backslash G)$ of the (algebraic) sum of
  subspaces $\sum_{h'\in \mathcal H_K} P_\Gamma\left(\mathcal B_{h'}\right)$ there is no closed irreducible subspace
  equivalent to $\pi$. Indeed, if $W$ is  such a subspace, then by (\ref{aaf-5}) and Lemma \ref{ids-2} (iii), we have
  $$
  r(\overline{f}).\varphi(1)= \langle P_\Gamma(\overline{f}), \varphi \rangle =0, 
\ \ \varphi\in W,  \ f\in \mathcal B_{h'}, \ \ h'\in \mathcal H_K.
  $$
  We remark that $r(\overline{f}).\varphi$ is a $K$--finite function by Lemma \ref{ids-2} (iv) and the fact
  $$
  r(k)\left(r(\overline{f})\varphi\right)= r\left(\overline{l(k)f}\right).\varphi, \ \ k\in K.
  $$
  Also, for $u\in \mathcal U(\mathfrak g_{\mathbb C})$, using Lemma \ref{ids-2} (v),  we have
  $$
  r(u)\left(r(\overline{f})\varphi\right)= r\left(\overline{l(u)f}\right).\varphi.
  $$
  Thus, by our assumption
  $$
  r(u)\left(r(\overline{f})\varphi\right)(1)=r\left(\overline{l(u)f}\right).\varphi(1)=0.
  $$
  Now, we have that $r(\overline{f})\varphi$  is real--analytic, and has all derivatives at $1$ equal to zero.
  This implies
  $$
  r(\overline{f}).\varphi =0, \ \ \varphi\in W,  \ f\in \mathcal B_{h'}, \ \ h'\in \mathcal H_K.
  $$
  But $W$ is unitary equivalent to $(\pi, \mathcal H)$. Let $\varphi \in W$ be a non--zero $K$--finite vector.
  Assume that under the fixed unitary equivalence corresponds to say $h'\in \mathcal H_K$. Then, by Schur orthogonality
  (see the proof of Lemma \ref{ids-2} (vi))
  $$
  r(\overline{c_{h', h'}}).h' \neq 0.
  $$
  (In  (\ref{ids-2000}),  we let $h=h_1=h{''}=h'$.) This contradicts above equality.
 \end{proof}

\section{Application to Automorphic Forms}\label{1aaf}

In this section we apply the results of Section \ref{aaf} to prove results for automorphic forms. We start stating hypothesis
on $G$ and $\Gamma$. In this section we assume that
 $G$ is a group of $\mathbb R$--points of a
semisimple algebraic group $\mathcal G$ defined over $\mathbb Q$. Assume that $G$ is not
compact and connected.
Let $\Gamma\subset G$ be congruence subgroup with respect to the arithmetic structure
given by the fact that $\mathcal G$ defined over $\mathbb Q$ (see \cite{BJ}).
Then,  $\Gamma$ is a discrete subgroup of $G$ and it has a finite covolume. We give the details of how to construct
congruence subgroups.

 Let $\mathbb A$ (resp., $\mathbb A_f$) be the ring of adeles (resp., finite adeles) of
$\mathbb Q$.  For each prime $p$, let 
$\mathbb Z_p$ be the maximal compact subring of $\mathbb Q_p$. Then, for almost all primes $p$, the group $\mathcal G$
is unramified over  $\mathbb Q_p$; in particular, $\mathcal G$ is a group scheme over $\mathbb Z_p$, and   $\mathcal G(\bbZ_p)$
is a hyperspecial maximal compact subgroup of  $\mathcal G(\mathbb Q_p)$ (\cite{Tits}, 3.9.1).  Let $\mathcal G(\bbA_f)$
be the restricted product of all groups $G(\mathbb Q_p)$ with respect to the groups $\mathcal G(\mathbb Z_p)$ where $p$ ranges
over all primes $p$ such that $\mathcal G$ is unramified over $\mathbb Q_p$:
$$
\mathcal G(\mathbb A_f)=\prod'_p \mathcal G(\mathbb Q_p).
$$
Note that
\begin{equation}\label{1aaf-0}
  \mathcal G(\mathbb A)= \mathcal G(\mathbb R)\times \mathcal G(\mathbb A_f).
\end{equation}
The group $\mathcal G(\mathbb Q)$ is embedded into $\mathcal G(\mathbb R)$ and
$\mathcal G(\mathbb Q_p)$. It is embedded diagonally in $\mathcal G(\mathbb A_f)$ and
in $\mathcal G(\mathbb A)$.

\vskip .2in
The congruence subgroups of $\mathcal G$ are defined as follows (see \cite{BJ}). Let $L\subset \mathcal G(\mathbb A_f)$ be an
open compact subgroup. Then, considering $\mathcal G(\mathbb Q)$ diagonally embedded in $\mathcal G(\mathbb A_f)$, we may
consider the intersection

\begin{equation}\label{1aaf-00}
\Gamma_L=L\cap \mathcal G(\mathbb Q).
\end{equation}
Now, we consider $\mathcal G(\mathbb Q)$ as subgroup of $G=\mathcal G(\mathbb R)$. One easily show that the group
$\Gamma_L$ is discrete in $G$ and it has a finite covolume. The group $\Gamma_L$ is called a congruence subgroup attached to $L$.

\vskip .2in
We introduce a family  of principal congruence groups (which depend on an embedding over $\mathbb Q$ of $\mathcal G$ into
some $SL_M$).  We fix an embedding over $\mathbb Q$
\begin{equation}\label{1aaf-1}
\mathcal G \hookrightarrow SL_M
\end{equation}
with a image Zariski closed in $SL_M$. Then there exists $N\ge 1$ such that 
\begin{equation}\label{1aaf-2}
\text{$\mathcal G$ is a group scheme over $\bbZ[1/N]$ and the embedding  (\ref{1aaf-1}) is  defined over $\bbZ[1/N]$.}
\end{equation} 
We fix such $N$.

As usual, we let $\mathcal G_\bbZ=\mathcal G(\mathbb Q)\cap SL_M(\mathbb Z)$, and $\mathcal G_{\mathbb Z_p}=
\mathcal G(\mathbb Q_p)\cap SL_M(\mathbb Z_p)$ for all prime numbers $p$. We remark that 
$\mathcal G$ is a group scheme over $\mathbb Z_p$ and the embedding  (\ref{1aaf-1}) is  defined over $\mathbb Z_p$ when
$p$ does not divide  $N$. Then $\mathcal G_{\mathbb Z_p}=\mathcal G(\mathbb Z_p)$ but
$\mathcal G$ may not be unramified over such $p$.  In general, 
$\mathcal G_{\mathbb Z_p}$ is just an open compact subgroup of $\mathcal G(\mathbb Q_p)$.

Now, we are ready to define the standard congruence subgroups with respect to the embedding (\ref{1aaf-1}).
Let $n\ge 1$. Then, we let

\begin{equation}\label{1aaf-3}
\Gamma(n) =\{x=(x_{i,j})\in \mathcal G_{\mathbb Z}: \ \ x_{i,j}\equiv \delta_{i,j} \ (mod \ n)\}.
\end{equation}

\medskip
The first result of the present section is the following theorem in which we give a construct smooth automorphic forms. The proof
contains a non--standard proof of cuspidality of Poincar\' e series (see for example \cite{MuicMathAnn}, Theorem 3-10
for the standard proof).

\begin{Thm}\label{1aaf-4} Assume that $G$ admits discrete series. Let $(\pi, \mathcal H)$ is an integrable discrete series of $G$.
  Let $h'\in \mathcal H_K$, $h'\neq 0$. Then, we have the following:
  \begin{itemize}
  \item[(i)] For each congruence subgroup $\Gamma$, the map $P_\Gamma$ maps the space of smooth vectors   $\mathcal B_{h'}^\infty$
    of $\mathcal B_{h'}$ into the space of smooth cuspidal forms $\mathcal A_{cusp}^\infty\left(\Gamma\setminus G\right)$ for $\Gamma$.
  \item[(ii)]  Assume that a family of principal congruence subgroups is defined with respect to  the embedding (\ref{1aaf-1}).
    There exists $n_0$ which depends on $\pi$ and $h'$ only, such that for
    $n\ge n_0$, the map  $P_{\Gamma(n)}$ is an embedding of  $\mathcal B_{h'}^\infty$ into
    $\mathcal A_{cusp}^\infty\left(\Gamma(n)\setminus G\right)$.
\end{itemize}
  \end{Thm}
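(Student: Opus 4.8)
The plan is to deduce (i) by combining the two parts of Theorem \ref{aaf-3} with the fact that an integrable discrete series occurs only cuspidally in $L^2(\Gamma\backslash G)$, and to deduce (ii) from the non--vanishing criterion of Lemma \ref{aaf-1} together with the irreducibility of $\mathcal B_{h'}$. First, for (i), fix a congruence subgroup $\Gamma$ (the principal congruence subgroups $\Gamma(n)$ of (\ref{1aaf-3}) are of this type, so the same discussion will be reused in (ii)). By Theorem \ref{aaf-3} (ii), $P_\Gamma$ sends $\mathcal B_{h'}^\infty$ into the space of $\mathcal Z(\mathfrak g_{\mathbb C})$--finite vectors of $\mathcal A_{umg}(\Gamma\backslash G)$; by the definitions recalled in Section \ref{paf}, a $\mathcal Z(\mathfrak g_{\mathbb C})$--finite element of $\mathcal A_{umg}(\Gamma\backslash G)$ is exactly a smooth automorphic form, so $P_\Gamma(\mathcal B_{h'}^\infty)\subset\mathcal A^\infty(\Gamma\backslash G)$, and only cuspidality remains.

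For cuspidality I would argue as follows. If $P_\Gamma(\mathcal B_{h'})=0$ the claim is vacuous, so assume it is non--zero; then Theorem \ref{aaf-3} (i) says $P_\Gamma$ is a continuous $G$--equivariant embedding whose closure $\mathcal H_0\subset L^2(\Gamma\backslash G)$ is irreducible and unitarily equivalent to $(\pi,\mathcal H)$. Since $\pi$ is an integrable discrete series, the asymptotic results of Wallach \cite{W0} (as recalled in the Introduction) force every closed $G$--invariant subspace of $L^2(\Gamma\backslash G)$ isomorphic to a discrete series to lie in the cuspidal subspace $L^2_{cusp}(\Gamma\backslash G)$; hence $\mathcal H_0\subset L^2_{cusp}(\Gamma\backslash G)$. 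Now take $\varphi\in\mathcal B_{h'}^\infty$: on the one hand $P_\Gamma(\varphi)\in\mathcal A^\infty(\Gamma\backslash G)$, so for every proper $\mathbb Q$--parabolic subgroup $\mathcal P$ with unipotent radical $U$ the constant term $x\mapsto\int_{U\cap\Gamma\backslash U}P_\Gamma(\varphi)(ux)\,dx$ is everywhere defined and continuous in $x$; on the other hand $P_\Gamma(\varphi)\in L^2_{cusp}(\Gamma\backslash G)$ forces this constant term to vanish for almost every $x$, hence identically. Therefore $P_\Gamma(\varphi)\in\mathcal A^\infty_{cusp}(\Gamma\backslash G)$, which is (i).

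For (ii) I would apply Lemma \ref{aaf-1} (i) to the $K$--finite matrix coefficient $c_{h',h'}=\langle\pi(\cdot)h',h'\rangle$, which lies in $\mathcal B_{h'}\subset L^1(G)$ and is non--zero because $h'\ne 0$. Since $c_{h',h'}\in L^1(G)$ and $G$ is countable at infinity, one can pick a compact neighbourhood $C$ of $1$ in $G$ with $\int_C|c_{h',h'}(g)|\,dg>\int_{G-C}|c_{h',h'}(g)|\,dg$; fix such a $C$, which depends only on $\pi$ and $h'$. The set $F=\bigl(\mathcal G_{\mathbb Z}\cap C\cdot C^{-1}\bigr)\setminus\{1\}$ is finite, since $\mathcal G_{\mathbb Z}$ is discrete in $G$ and $C\cdot C^{-1}$ is relatively compact; and for each $\gamma\in F$ some entry $\gamma_{ij}-\delta_{ij}$ is a non--zero integer, so $\gamma\notin\Gamma(n)$ whenever $n>|\gamma_{ij}-\delta_{ij}|$. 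Choosing $n_0=n_0(\pi,h')$ larger than all these finitely many bounds gives $\Gamma(n)\cap C\cdot C^{-1}=\{1\}$ for every $n\ge n_0$, so Lemma \ref{aaf-1} (i) yields $P_{\Gamma(n)}(c_{h',h'})\ne 0$ for $n\ge n_0$. Then $P_{\Gamma(n)}$ is a non--zero continuous $G$--equivariant map on the irreducible representation $\mathcal B_{h'}$ (Lemma \ref{ids-2} (ii)), so its (closed, $G$--invariant) kernel is $\{0\}$ and $P_{\Gamma(n)}$ is injective on $\mathcal B_{h'}$, a fortiori on $\mathcal B_{h'}^\infty$, which by Lemma \ref{ids-2} (vii) is isomorphic to $\mathcal H^\infty$. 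Combined with (i) applied to the congruence subgroup $\Gamma(n)$ and with the continuity supplied by Theorem \ref{aaf-3}, this makes $P_{\Gamma(n)}$ an embedding of $\mathcal B_{h'}^\infty$ into $\mathcal A^\infty_{cusp}(\Gamma(n)\backslash G)$, which is (ii).

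I expect the cuspidality step inside (i) to be the main obstacle: the growth and $\mathcal Z(\mathfrak g_{\mathbb C})$--finiteness are already packaged in Theorem \ref{aaf-3}, and the non--vanishing in (ii) is essentially bookkeeping over Lemma \ref{aaf-1}, but identifying the $L^2$--closure of the image with the cuspidal spectrum rests on the genuinely external input that an integrable discrete series cannot appear in the Eisenstein (non--cuspidal) part of $L^2(\Gamma\backslash G)$, together with the easy but indispensable remark that smoothness promotes the almost--everywhere vanishing of constant terms furnished by membership in $L^2_{cusp}(\Gamma\backslash G)$ to honest vanishing.
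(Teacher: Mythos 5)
Your proposal is correct and follows essentially the same route as the paper: part (i) combines Theorem \ref{aaf-3} (i)--(ii) with Wallach's result that a discrete-series subrepresentation of $L^2(\Gamma\backslash G)$ lies in the cuspidal subspace, plus Lemma \ref{src-3}, and part (ii) applies the non-vanishing criterion of Lemma \ref{aaf-1} (i) to $c_{h',h'}$ together with irreducibility of $\mathcal B_{h'}$. The only difference is that you spell out two details the paper leaves to the reader (promoting the almost-everywhere vanishing of constant terms to everywhere vanishing via continuity, and the integrality argument giving $\Gamma(n)\cap C\cdot C^{-1}=\{1\}$ for large $n$), both of which are fine.
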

\begin{proof} We prove (i). By Theorem \ref{aaf-3} (i), the image $P_\Gamma\left(\mathcal B_{h'}\right)$ is either zero, or it is
  an embedding and its closure is an irreducible subspace unitary  equivalent to $(\pi, \mathcal H)$. If the image is zero, then
  we clearly prove (i). But if the image is not--zero, then it is in the discrete part of $L^2(\Gamma \backslash G)$.
  Since the image is  infinitesimally equivalent to a discrete series $(\pi, \mathcal H)$, it must be contained in the
  cuspidal part of  $L^2(\Gamma \backslash G)$ by a well--known result of Wallach \cite{W0}. In particular, all functions in
  $P_\Gamma\left(\mathcal B^\infty_{h'}\right)$ are $\Gamma$--cuspidal.

  Next, By Theorem \ref{aaf-3} (ii), $P_\Gamma\left(\mathcal B^\infty_{h'}\right)$ is contained in
  the subspace of $\cal Z(\mathfrak g_{\mathbb C})$--finite vectors in
  $\cal A_{umg}(\Gamma\backslash G)$ which is the space of smooth automorphic forms
  $\mathcal A_{cusp}^\infty\left(\Gamma\setminus G\right)$ (see Lemma \ref{src-3}).  Now, (i) follows. 

  The claim (ii), follows from (i) apply  the criterion given by Lemma \ref{aaf-1} (i) to any  non--zero function
  $\varphi\in \mathcal B_{h'}^\infty$.  The details are left to the reader since they are similar to the ones in the proof
  of (\cite{MuicMathAnn}, Theorem 6-1).
  \end{proof}

\medskip
Now, we consider adelic automorphic forms. An automorphic form is a function $f: \mathcal G(\mathbb A)\longrightarrow
\mathbb C$ satisfying the following conditions (see \cite{BJ}, 4.2):
\medskip
\begin{itemize}
\item[(AA-1)] $f(\gamma x)=f(x)$, for all $\gamma\in  \mathcal G(\mathbb Q)$, $x\in \mathcal G(\mathbb A)$,
\item[(AA-2)] There exists an open compact subgroup $L\subset  \mathcal G(\mathbb A_f)$ such that $f$ is right--invariant under $L$,
\item[(AA-3)] For each $x_f\in \mathcal G(\mathbb A_f)$, the function $x_\infty\longmapsto f(x_\infty, x_f)$ satisfies the analogous
      conditions to those (A-1) and  (A-3) of Section \ref{paf}.
  \end{itemize}
We remark that for an open compact subgroup $L\subset  \mathcal G(\mathbb A_f)$, there exists a finite set $C\subset
\mathcal G(\mathbb A_f)$ such that $ \mathcal G(\mathbb A_f)= \mathcal G(\mathbb Q)\cdot  C \cdot L$ (see \cite{Borel1963}). 
We may assume that 
$C$ is the set of representatives of double cosets $\mathcal G(\mathbb Q)\backslash \mathcal G(\mathbb A_f) /L$. Then, 
by (AA-1) and (AA-2), $f$ is
completely determined by the functions $f_c$ on $G$  defined by $f_c=f|_{G\times {c}}$ for any $c\in C$. The function $f_c$
is an automorphic form for $\Gamma_{cLc^{-1}}$ (see (\ref{1aaf-00})). Next, $f$ is a cuspidal automorphic form if
\begin{equation}\label{1aaf-000}
\int_{U_{\mathcal P}(\mathbb Q)\setminus U_{\mathcal P}(\mathbb A)}f(nx)dn=0 \ \ \text{(for all
  $x\in \mathcal G(\mathbb A)$)},
\end{equation}
for all proper $\mathbb Q$--parabolic subgroups $\mathcal P$ of $\mathcal G$. Here $U_{\mathcal P}$ denotes the unipotent radical of
$\mathcal P$. It is observed in (\cite{BJ}, 4.4) that $f$ is a cuspidal automorphic form if and only if $f_c$ is
a $\Gamma_{cLc^{-1}}$--cuspidal for all $c\in C$.  This is a consequence of a simple integration formula
(see for example (\cite{MuicComp}, Lemma 2.3) for complete account):

\begin{Lem}\label{1aaf-0000}
Let $\psi: U_{\mathcal P}(\mathbb Q)\backslash 
  U_{\mathcal P}(\mathbb A)\rightarrow \mathbb C$ be a continuous function which is right--invariant under some open compact subgroup
 denoted by  $L_P\subset U_{\mathcal P}(\mathbb A_f)$. Then we
  have the following formula:
$$
\int_{U_{\mathcal P}(\mathbb Q)\backslash 
  U_{\mathcal P}(\mathbb A)}
\psi(u)du=vol_{  U_{\mathcal P}(\mathbb A_f)}(L_P)\cdot  \int_{\Gamma_{L_P} \setminus
  U_{\mathcal P}(\mathbb R)}\psi(u_\infty)du_\infty,
$$
where $\Gamma_{L_P}$ is a discrete subgroup of $  U_{\mathcal P}(\mathbb A)$ defined as
before: $\Gamma_{L_P}=  U_{\mathcal P}(\mathbb Q)\cap L_P$.
\end{Lem}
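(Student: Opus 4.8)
The plan is to reduce the adelic integral to an integral over $\Gamma_{L_P}\backslash U_{\mathcal P}(\mathbb R)$ by exploiting the decomposition $U_{\mathcal P}(\mathbb A)=U_{\mathcal P}(\mathbb R)\times U_{\mathcal P}(\mathbb A_f)$ together with strong approximation for unipotent groups. Write $U:=U_{\mathcal P}$, and fix Haar measures $du_\infty$ on $U(\mathbb R)$ and $du_f$ on $U(\mathbb A_f)$ whose product is the Haar measure $du$ on $U(\mathbb A)$; since both sides of the asserted identity scale the same way under rescaling $du_\infty$ or $du_f$, no normalization is lost by this choice. The first point to record is that $U(\mathbb Q)$ is dense in $U(\mathbb A_f)$: this is the usual strong approximation for unipotent groups, since $U$ is a successive extension of copies of $\mathbb G_a$ and $\mathbb Q$ is dense in $\mathbb A_f$, so it follows by filtering $U$ and inducting on the length. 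Consequently $U(\mathbb Q)\cdot L_P$ is a subset of $U(\mathbb A_f)$ which is open (a union of cosets of the open subgroup $L_P$), closed (its complement is likewise a union of cosets of $L_P$), and dense (it contains $U(\mathbb Q)$); hence $U(\mathbb A_f)=U(\mathbb Q)\cdot L_P$.

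Next I would exhibit a measure-preserving bijection
\[
\Gamma_{L_P}\backslash\bigl(U(\mathbb R)\times L_P\bigr)\;\overset{\sim}{\longrightarrow}\;U(\mathbb Q)\backslash U(\mathbb A),
\]
where $U(\mathbb Q)$ is embedded diagonally and $\Gamma_{L_P}=U(\mathbb Q)\cap L_P$ acts diagonally by left translation. Surjectivity uses $U(\mathbb A_f)=U(\mathbb Q)L_P$: given $(u_\infty,u_f)$, write $u_f=\gamma\ell$ with $\gamma\in U(\mathbb Q)$, $\ell\in L_P$, and observe $\gamma^{-1}(u_\infty,u_f)=(\gamma^{-1}u_\infty,\ell)\in U(\mathbb R)\times L_P$. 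Injectivity: if $(u_\infty,\ell)$ and $(u_\infty',\ell')$ in $U(\mathbb R)\times L_P$ differ by some $\gamma\in U(\mathbb Q)$, then from the $\mathbb A_f$-component $\gamma=\ell'\ell^{-1}\in L_P$, so $\gamma\in\Gamma_{L_P}$. One also checks readily that $\Gamma_{L_P}$ is discrete in $U(\mathbb R)$ (because $L_P$ is compact open) and torsion-free, hence acts freely on $U(\mathbb R)$. Finally, the composite $U(\mathbb R)\times L_P\hookrightarrow U(\mathbb A)\to U(\mathbb Q)\backslash U(\mathbb A)$ is precisely the quotient map for this $\Gamma_{L_P}$-action, so it carries the restriction of $du$ to $U(\mathbb R)\times L_P$ onto the quotient measure $du$ on $U(\mathbb Q)\backslash U(\mathbb A)$.

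It then remains to integrate. Right $L_P$-invariance of $\psi$ forces $(u_\infty,\ell)\mapsto\psi(u_\infty,\ell)$ on $U(\mathbb R)\times L_P$ to be independent of $\ell\in L_P$; write $\psi(u_\infty)$ for its common value, which is $\Gamma_{L_P}$-invariant by left $U(\mathbb Q)$-invariance of $\psi$ and hence descends to $\Gamma_{L_P}\backslash U(\mathbb R)$. Picking a fundamental domain $F\subset U(\mathbb R)$ for $\Gamma_{L_P}$, the set $F\times L_P$ is a fundamental domain for the diagonal $\Gamma_{L_P}$-action on $U(\mathbb R)\times L_P$ — here freeness of $\Gamma_{L_P}$ on $U(\mathbb R)$ guarantees that a full copy of $L_P$ survives over each point of $\Gamma_{L_P}\backslash U(\mathbb R)$ — so by Fubini
\[
\int_{U(\mathbb Q)\backslash U(\mathbb A)}\psi(u)\,du=\int_{F}\int_{L_P}\psi(u_\infty)\,d\ell\,du_\infty=vol_{U_{\mathcal P}(\mathbb A_f)}(L_P)\int_{\Gamma_{L_P}\backslash U(\mathbb R)}\psi(u_\infty)\,du_\infty,
\]
which is the claim.

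I expect the one genuinely substantive step to be the construction in the second paragraph: recognizing that strong approximation collapses the adelic quotient onto $\Gamma_{L_P}\backslash U(\mathbb R)$ with the compact factor $L_P$ contributing only its volume, and keeping the quotient measures consistently matched. The remaining points — measurability from continuity of $\psi$, the inductive proof of density of $U(\mathbb Q)$ in $U(\mathbb A_f)$, and discreteness of $\Gamma_{L_P}$ in $U(\mathbb R)$ — are routine, and a complete account is given in \cite{MuicComp}, Lemma 2.3.
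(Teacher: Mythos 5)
Your proof is correct, and it is essentially the standard argument: the paper itself offers no proof of this lemma, merely citing (\cite{MuicComp}, Lemma 2.3), and your strong-approximation unfolding ($U_{\mathcal P}(\mathbb A_f)=U_{\mathcal P}(\mathbb Q)L_P$, the identification $\Gamma_{L_P}\backslash\left(U_{\mathcal P}(\mathbb R)\times L_P\right)\simeq U_{\mathcal P}(\mathbb Q)\backslash U_{\mathcal P}(\mathbb A)$, then Fubini) is exactly the expected computation behind that reference. One cosmetic remark: freeness of the $\Gamma_{L_P}$-action is automatic for left translations, so the appeal to torsion-freeness is unnecessary (though harmless).
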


\medskip

The space of all automorphic forms we denote
by $\mathcal A(\mathcal G(\mathbb Q)\backslash \mathcal G(\mathbb A))$. It is a 
$(\mathfrak g, K)\times \mathcal G(\mathbb A_F)$--module. Its submodule is the
space of all cuspidal automorphc forms $\mathcal A_{cusp}(\mathcal G(\mathbb Q)\backslash \mathcal G(\mathbb A))$.

\medskip
The smooth automorphic forms is defined by forgetting $K$--finiteness assumption in
(AA-3). The space of all smooth automorphic forms and smooth cuspidal automorphic forms 
we denote by $\mathcal A^\infty(\mathcal G(\mathbb Q)\backslash \mathcal G(\mathbb A))$ and
$\mathcal A^\infty_{cusp}(\mathcal G(\mathbb Q)\backslash \mathcal G(\mathbb A))$, respectively. The claims analogous to those above
for relation between smooth automorphic forms on $\mathcal G(\mathbb A)$ and $G$ are easily checked to be true.

\medskip
\begin{Cor}\label{1aaf-5} Assume that $G$ admits discrete series. Let $(\pi, \mathcal H)$ is an integrable discrete series of $G$.
  Let $h'\in \mathcal H_K$, $h'\neq 0$.  For each prime number $p$, we select a function $f_p\in C_c^\infty(\mathcal G(\mathbb Q_p))$ such that for almost all $p$ where $\mathcal G$
is unramified over  $\mathbb Q_p$ we have $f_p=1_{\mathcal G(\mathbb Z_p)}$ (the characteristic function of a hyperspecial 
maximal open compact  subgroup of $\mathcal G(\mathbb Q_p)$. Let $f_\infty \in  \mathcal B_{h'}^\infty$. 
Then, the Poincar\' e series
$$
\sum_{\gamma\in \mathcal G(\mathbb Q)} \left(f_\infty \otimes'_p \ f_p \right)(\gamma x), \ \ 
x\in \mathcal G(\mathbb A),
$$
converges absolutely almost everywhere to an element of $\mathcal A^\infty_{cusp}(\mathcal G(\mathbb Q)\backslash \mathcal G(\mathbb A))$. 
\end{Cor}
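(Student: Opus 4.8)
The plan is to deduce the adelic statement from its classical counterpart, Theorem \ref{1aaf-4}(i), through the standard dictionary between adelic automorphic forms and finite families of classical ones; the only step needing real care is a conjugation by rational points. First I set $f:=f_\infty\otimes_p'f_p=f_\infty\otimes f_f$ with $f_f:=\otimes_p'f_p$. Each $f_p$ is locally constant with compact support and equals $1_{\mathcal G(\mathbb Z_p)}$ for almost all $p$, so $f_f\in C_c^\infty(\mathcal G(\mathbb A_f))$ and $f_f$ is right--invariant under an open compact subgroup $L=\prod_pL_p$ with $L_p=\mathcal G(\mathbb Z_p)$ for almost all $p$. Since $f_\infty\in\mathcal B_{h'}^\infty\subset\mathcal B_{h'}\subset L^1(G)$, we get $f\in L^1(\mathcal G(\mathbb A))$; hence, by the computation opening Section \ref{aaf} applied to $\mathcal G(\mathbb A)$, the series $\sum_{\gamma\in\mathcal G(\mathbb Q)}|f(\gamma x)|$ has finite integral over $\mathcal G(\mathbb Q)\backslash\mathcal G(\mathbb A)$, so $P(f):=\sum_{\gamma\in\mathcal G(\mathbb Q)}f(\gamma\,\cdot\,)$ converges absolutely almost everywhere and lies in $L^1(\mathcal G(\mathbb Q)\backslash\mathcal G(\mathbb A))$. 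Being left $\mathcal G(\mathbb Q)$--invariant and right $L$--invariant, $P(f)$ is determined by the functions $F_c:=P(f)(\,\cdot\,,c)$ on $G$, where $c$ runs over a finite (by Borel's finiteness theorem) set $C$ of representatives of $\mathcal G(\mathbb Q)\backslash\mathcal G(\mathbb A_f)/L$, and each $F_c$ is left--invariant under the congruence subgroup $\Gamma_{cLc^{-1}}=\mathcal G(\mathbb Q)\cap cLc^{-1}$ (cf. (\ref{1aaf-00})).

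Next I rewrite each $F_c$ as a classical Poincar\'e series. Unwinding, $F_c(g)=\sum_{\gamma\in\mathcal G(\mathbb Q)}f_\infty(\gamma g)f_f(\gamma c)$; grouping this sum into right $\Gamma_{cLc^{-1}}$--cosets and using $f_f(\gamma\epsilon c)=f_f(\gamma c)$ for $\epsilon\in\Gamma_{cLc^{-1}}$ (since $\epsilon c\in cL$), only the finitely many cosets with $\gamma cL\subset\supp{(f_f)}$ contribute. Choosing representatives $\gamma_1,\dots,\gamma_m\in\mathcal G(\mathbb Q)$ of these cosets and putting $a_j:=f_f(\gamma_jc)$, the substitution $\delta=\gamma_j\epsilon\gamma_j^{-1}$ gives
$$F_c=\sum_{j=1}^m a_j\,l(\gamma_j^{-1})\,P_{\Gamma_j'}(f_\infty),\qquad \Gamma_j':=\gamma_j\,\Gamma_{cLc^{-1}}\,\gamma_j^{-1}=\mathcal G(\mathbb Q)\cap(\gamma_jc)L(\gamma_jc)^{-1},$$
each $\Gamma_j'$ being again a congruence subgroup and $P_{\Gamma_j'}$ the classical Poincar\'e series of Section \ref{aaf}.

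By Theorem \ref{1aaf-4}(i) we have $P_{\Gamma_j'}(f_\infty)\in\mathcal A^\infty_{cusp}(\Gamma_j'\backslash G)$ for each $j$, and left translation by $\gamma_j^{-1}$ (with $\gamma_j\in\mathcal G(\mathbb Q)$) carries $\mathcal A^\infty_{cusp}(\Gamma_j'\backslash G)$ onto $\mathcal A^\infty_{cusp}(\gamma_j^{-1}\Gamma_j'\gamma_j\backslash G)=\mathcal A^\infty_{cusp}(\Gamma_{cLc^{-1}}\backslash G)$: smoothness and right $\mathcal{Z}(\mathfrak{g}_{\mathbb{C}})$--finiteness are clearly preserved, the growth bound (A-3) is preserved because $\|\gamma_jx\|\le\|\gamma_j\|\,\|x\|$, and conjugation by $\gamma_j$ permutes the $\mathbb Q$--parabolic subgroups and identifies the corresponding unipotent integrals, so cuspidality is preserved. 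Hence $F_c\in\mathcal A^\infty_{cusp}(\Gamma_{cLc^{-1}}\backslash G)$ for every $c\in C$.

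Finally, by the dictionary recalled before Lemma \ref{1aaf-0000}, a right $L$-- and left $\mathcal G(\mathbb Q)$--invariant function on $\mathcal G(\mathbb A)$ all of whose components $F_c$ lie in $\mathcal A^\infty(\Gamma_{cLc^{-1}}\backslash G)$ satisfies (AA-1)--(AA-3) with $K$--finiteness omitted, i.e. is a smooth automorphic form, and (via Lemma \ref{1aaf-0000}) is cuspidal precisely because each $F_c$ is $\Gamma_{cLc^{-1}}$--cuspidal; hence $P(f)\in\mathcal A^\infty_{cusp}(\mathcal G(\mathbb Q)\backslash\mathcal G(\mathbb A))$, as claimed. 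I expect the one nontrivial step to be the bookkeeping in the middle paragraph: the finiteness of $\{\gamma_j\}$, the identity $P_{\Gamma}\circ l(\gamma^{-1})=l(\gamma^{-1})\circ P_{\gamma\Gamma\gamma^{-1}}$, and the fact that the twisted groups $\Gamma_j'$ remain congruence subgroups. Everything else is an appeal to Theorem \ref{1aaf-4}(i) together with the standard adelic--classical correspondence.
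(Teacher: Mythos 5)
Your proof is correct and takes essentially the same route as the paper: absolute convergence from $f_\infty\otimes'_p f_p\in L^1(\mathcal G(\mathbb A))$, unfolding the finite part over the finite double coset set $\mathcal G(\mathbb Q)\backslash\mathcal G(\mathbb A_f)/L$, recognizing the archimedean components as classical Poincar\'e series for the conjugated congruence subgroups $\mathcal G(\mathbb Q)\cap(\gamma_j c)L(\gamma_j c)^{-1}$, and then applying Theorem \ref{1aaf-4} (i) together with the adelic--classical dictionary. The only cosmetic difference is that you argue via the restrictions $F_c$ at the representatives $c$ and invoke the dictionary for general points, whereas the paper evaluates the series directly at a general point $(x_\infty,\delta c l)$; the substance is identical.
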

\begin{proof} The finite part of the tensor product $f_{fin}=\otimes'_p \ f_p$ belongs to $C_c^\infty(\mathcal G(\mathbb A_f))$. 
Therefore, there exists an open compact subgroup $L\subset \mathcal G(\mathbb A_f)$ such that $f_{fin}$ is right--invariant under 
$L$.  As we noted above, the set, say $C$, of representatives of double cosets $\mathcal G(\mathbb Q)\backslash \mathcal G(\mathbb A_f) /L$ is finite. We fix such set. Then,  there exists a finite sequence  $\gamma_1, \cdots, \gamma_k\in \mathcal G(\mathbb Q)$, and  
$c_1, \ldots, c_k\in C$ such that the cosets $\gamma_i c_k L$ are disjoint for different indices and we have 
$$
f_{fin}=\sum_{i=1}^k  f_{fin}(\gamma_ic_i) 1_{\gamma_ic_i L}.
$$   
Next, applying the decomposition (\ref{1aaf-0}), for 
$x=(x_\infty, \delta  c l)$, where $x_\infty \in G$, $\delta \in \mathcal G(\mathbb Q)$, $c\in C$, and $l\in L$, 
we can write:
\begin{align*}
\sum_{\gamma\in \mathcal G(\mathbb Q)} \left|f_\infty \otimes f_{fin} \right|(\gamma \cdot (x_\infty, \delta  c l)) &=
\sum_{\gamma\in \mathcal G(\mathbb Q)} \left|f_\infty \otimes f_{fin} \right|(\gamma x_\infty,  \gamma \delta  c l))\\
&= \sum_{\gamma\in \mathcal G(\mathbb Q)} \left| f_\infty(\gamma x_\infty)\right|  \left|f_{fin} (\gamma \delta  c)\right|\\
&=\sum_{i=1}^k \left| f_{fin}(\gamma_ic_i) \right|\sum_{\gamma\in \mathcal G(\mathbb Q)} \left|f_\infty(\gamma x_\infty) \right|
1_{\gamma_ic_i L}(\gamma \delta  c).
\end{align*}

The last expression is equal to zero if $c\neq c_i$ for all $i$, and if $c=c_j$ (for unique $j$), then the expression reduces to 
$$
\left| f_{fin}(\gamma_j c_j) \right|\sum_{\gamma\in \mathcal G(\mathbb Q)} \left|f_\infty(\gamma x_\infty) \right|
1_{\gamma_jc_j L}(\gamma \delta  c_j). 
$$
Non--zero terms in above sum comes from the case
$$
\gamma \delta  c_j\in \gamma_jc_j L
$$
 Equivalently, 
$$
\gamma_j^{-1}\gamma\delta\in \Gamma_{c_jLc^{-1}_j}.
$$ 
Thus, after changing the summation index $\gamma$ appropriately, the sum becomes
$$
\left| f_{fin}(\gamma_j c_j) \right|\sum_{\gamma\in \gamma_j \Gamma_{c_jLc^{-1}_j}\gamma^{-1}_j }
\left|f_\infty(\gamma  \gamma_j \delta^{-1} x_\infty) \right|.
$$
The last sum converges absolutely almost everywhere by the remark at beginning of Section \ref{aaf}. The same remark could be directly applied to the adelic Poincar\' e series (see \cite{MuicMathAnn}) giving the short proof, but we need this extended  argument. Indeed,  
all above computations are still valid if we remove all absolute values. In view of Theorem \ref{1aaf-4} (i), 
there exists 

$$F_j\in \mathcal A_{cusp}^\infty\left(\Gamma_{\gamma_j \Gamma_{c_jLc^{-1}_j}\gamma^{-1}_j }\setminus G\right)$$

such that
$$
F_j(\gamma_j \delta^{-1} x_\infty)=\sum_{\gamma\in \gamma_j \Gamma_{c_jLc^{-1}_j}\gamma^{-1}_j }
f_\infty(\gamma  \gamma_j \delta^{-1} x_\infty) 
$$
is everywhere for $x_\infty \in G_\infty$.  Next,  the function 
$$
x_\infty \longmapsto F_j(\gamma_j  \delta^{-1} x_\infty)
$$
belongs to 
$$
 \mathcal A_{cusp}^\infty\left(\Gamma_{\delta \Gamma_{c_jLc^{-1}_j}\delta^{-1}}\setminus G\right)
$$ 
as it is easy to check directly from the definition of a smooth cuspidal automorphic form (see Section \ref{paf}).  This implies the corollary.
\end{proof}

\medskip
Following methods of (\cite{MuicMathAnn}, Theorems 4.1) one can developed sufficient conditions that the adelic 
Poincar\' e series from  Corollary \ref{1aaf-5} is not identically zero. For example, one could fix a prime number $p_0$ and then shrink the 
support of $f_{p_0}$. There are other possibilities. We leave it to the interested reader as an exercise from the following result.

\medskip 
Before we state the proposition, using the notation of Corollary \ref{1aaf-5},  we remark that 
we can select a compact neighborhood $C_\infty\subset G$ (i.e., compact set which is a closure of an open set) such that 
\begin{equation}\label{1aaf-6}
\int_{C_\infty} \left|f_\infty(x_\infty)\right| dx_\infty > \frac12 \int_{G} \left|f_\infty(x_\infty)\right| dx_\infty 
\end{equation}
since $f_\infty\in L^1(G)$.

 \begin{Prop}\label{1aaf-7}  We maintain the assumptions of Corollary \ref{1aaf-5}.  Let $C_\infty\subset G$
be a compact set such that (\ref{1aaf-6}) holds. Then, the  Poincar\' e series
$$
\sum_{\gamma\in \mathcal G(\mathbb Q)} \left(f_\infty \otimes'_p \ f_p \right)(\gamma x), \ \ 
x\in \mathcal G(\mathbb A)
$$
is not identically zero if
$$
\mathcal G(\mathbb Q)\cap 
 \left(\prod_p \ \supp{(f_p)}\times C_\infty \right)\cdot  \left(\prod_p \ \supp{(f_p)}\times C_\infty \right)^{-1}=\left\{1\right\}.
$$
We note that $f_p=1_{\mathcal G(\mathbb Z_p)}$ for almost all $p$. Consequently, $\supp{(f_p)}=\mathcal G(\mathbb Z_p)$.
\end{Prop}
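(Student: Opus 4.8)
The plan is to deduce this from the non-vanishing criterion of Lemma \ref{aaf-1} (i) (that is, \cite{MuicMathAnn}, Theorem 4-1), whose proof is purely measure-theoretic and applies verbatim to any locally compact group together with a discrete subgroup, provided the function in question lies in $L^1$. So I would work directly on $\mathcal G(\mathbb A)$ with the discrete subgroup $\mathcal G(\mathbb Q)$. Put $\Phi=f_\infty\otimes'_p f_p$ and
$$
C=\prod_p\supp{(f_p)}\times C_\infty\subset \mathcal G(\mathbb A).
$$
Since $f_p=1_{\mathcal G(\mathbb Z_p)}$ for almost all $p$, the factor $\prod_p\supp{(f_p)}$ is a compact open subset of $\mathcal G(\mathbb A_f)$, so $C$ is compact; and $\Phi\in L^1(\mathcal G(\mathbb A))$ because $f_\infty\in\mathcal B_{h'}^\infty\subset L^1(G)$ and $f_{fin}:=\otimes'_p f_p\in C_c^\infty(\mathcal G(\mathbb A_f))$, using (\ref{1aaf-0}) and Fubini. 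Write $P(x)=\sum_{\gamma\in\mathcal G(\mathbb Q)}\Phi(\gamma x)$; by Corollary \ref{1aaf-5} this series converges absolutely almost everywhere. It is enough to show $\int_C|P(x)|\,dx>0$.

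Two preliminary observations. First, the hypothesis $\mathcal G(\mathbb Q)\cap C\cdot C^{-1}=\{1\}$ forces the translates $\{\gamma C:\gamma\in\mathcal G(\mathbb Q)\}$ to be pairwise disjoint: if $\gamma_1c_1=\gamma_2c_2$ with $c_1,c_2\in C$ then $\gamma_2^{-1}\gamma_1=c_2c_1^{-1}\in\mathcal G(\mathbb Q)\cap C\cdot C^{-1}=\{1\}$. Second, by the product decomposition (\ref{1aaf-0}), the product form of $\Phi$, and the fact that $\supp{(f_p)}$ carries the whole $L^1$-mass of $f_p$,
$$
\int_C|\Phi|=\left(\int_{C_\infty}\left|f_\infty(x_\infty)\right|dx_\infty\right)\prod_p||f_p||_1,\qquad \int_{\mathcal G(\mathbb A)}|\Phi|=||f_\infty||_1\prod_p||f_p||_1,
$$
so that (\ref{1aaf-6}) yields $\int_C|\Phi|>\frac12\int_{\mathcal G(\mathbb A)}|\Phi|$, equivalently $\int_C|\Phi|>\int_{\mathcal G(\mathbb A)\setminus C}|\Phi|$.

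Now I would run the usual estimate. By disjointness of the translates, $\int_C\sum_{\gamma}|\Phi(\gamma x)|\,dx=\sum_\gamma\int_{\gamma C}|\Phi|=\int_{\bigsqcup_\gamma\gamma C}|\Phi|\le||\Phi||_1<\infty$, so $\sum_\gamma|\Phi(\gamma x)|\in L^1(C)$ and the pointwise bound $|P(x)|\ge|\Phi(x)|-\sum_{\gamma\ne 1}|\Phi(\gamma x)|$, valid for $x\in C$, may be integrated over $C$. Since $\bigsqcup_{\gamma\ne 1}\gamma C$ is disjoint from $C=1\cdot C$, it is contained in $\mathcal G(\mathbb A)\setminus C$, hence
$$
\int_C|P(x)|\,dx\ \ge\ \int_C|\Phi|-\sum_{\gamma\ne 1}\int_{\gamma C}|\Phi|\ \ge\ \int_C|\Phi|-\int_{\mathcal G(\mathbb A)\setminus C}|\Phi|\ >\ 0
$$
by the second observation. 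Therefore $P$ is not identically zero, which is exactly the claim.

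The argument contains no real obstacle; the only points demanding a little care are the measure-theoretic ones — that $\int_C|P|$ is meaningful (handled by the $L^1(C)$-bound on $\sum_\gamma|\Phi(\gamma x)|$ coming from the disjointness of the $\mathcal G(\mathbb Q)$-translates of $C$) and the correct bookkeeping of the product Haar measure on $\mathcal G(\mathbb A)=\mathcal G(\mathbb R)\times\mathcal G(\mathbb A_f)$ in the second observation. One could instead unfold $\Phi$ over representatives of $\mathcal G(\mathbb Q)\backslash\mathcal G(\mathbb A_f)/L$ as in the proof of Corollary \ref{1aaf-5} and then invoke Lemma \ref{aaf-1} (i) on the archimedean factor, but the global computation above is cleaner and tracks the adelic hypothesis of the proposition directly.
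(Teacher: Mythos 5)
Your proof is correct and follows essentially the same route as the paper: one checks $f_\infty\otimes'_p f_p\in L^1(\mathcal G(\mathbb A))$, takes $C=\prod_p\supp{(f_p)}\times C_\infty$, and uses factorizability of the integrals together with (\ref{1aaf-6}) to verify the hypotheses of the non--vanishing criterion of (\cite{MuicMathAnn}, Theorem 4.1) applied adelically to $\mathcal G(\mathbb Q)\subset\mathcal G(\mathbb A)$. The only difference is cosmetic: the paper simply cites that criterion in its general (adelic) form, whereas you re-derive it on $\mathcal G(\mathbb A)$ via the disjointness of the translates $\gamma C$, which is a legitimate and self-contained way to justify the transfer from Lemma \ref{aaf-1} (i).
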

\begin{proof} Note that $f_\infty \otimes'_p \ f_p\in L^1(\mathcal G(\mathbb A))$ . Now, we apply the general criterion
(\cite{MuicMathAnn}, Theorems 4.1)  which claims that if we can find a compact neighborhood $C\subset \mathcal G(\mathbb A)$
such that 
$$
\int_{C} \left|\left(f_\infty \otimes'_p \ f_p\right)(x)\right| dx > \frac12 \int_{G} 
\left|\left(f_\infty \otimes'_p \ f_p\right)(x)\right| dx,
$$
and 
$$
\mathcal G(\mathbb Q)\cap C\cdot C^{-1}=\left\{1\right\}.
$$
Since the integrals in above inequality are factorizable, by our assumption we can take
$$C=\prod_p \ \supp{(f_p)}\times C_\infty.
$$ 
\end{proof}


\begin{thebibliography}{999999}

\bibitem{Borel1963}
{\sc A.~Borel,} {\em Some finiteness properties of adele groups over number fields,} Publ. Math.
Inst. Hautes \' Etudes Sci. {\bf 16} (1963), 5-30.

\bibitem{Borel1966}
{\sc A.~Borel,} {\em Introduction to automorphic forms 
(Proc. Sympos. Pure Math., Oregon State Univ., Corvallis, Ore., 1977),
Part 1,} Proc. Sympos. Pure Math. {\bf  IX},  Amer. Math. Soc., Providence, R.I.
(1966), 199--210.

\bibitem{Borel-SL(2)-book}{\sc A.~Borel,} {\em 
Automorphic forms on $SL_2 (\mathbb R)$ (Cambridge Tracts in Mathematics),}
Cambridge University Press (1997).
 

\bibitem{bb}{\sc W.~L.~Baily,  A.~Borel,} {\em Compactification of
    arithmetic quotients of bounded symmetric domains,} Annals of
    Math., Vol. {\bf 84} (1966), 442--528.


\bibitem{BJ}
{\sc A.~Borel, H.~Jacquet,} {\em Automorphic forms and automorphic representations
(Proc. Sympos. Pure Math., Oregon State Univ., Corvallis, Ore., 1977),
Part 1,} Proc. Sympos. Pure Math. {\bf  XXXIII},  Amer. Math. Soc., Providence, R.I.
(1979), 189--202.

\bibitem{casselman}{W.~Casselman,}
  {\em  Canonical extensions of Harish-Chandra modules to
    representations of G. Canad. J. Math. 41 (1989), no. 3,
    385–-438.}
\bibitem{casselman-1}{W.~Casselman,}
  {\em  Introduction to the Schwartz space of $\Gamma \backslash G$.}
  Canad. J. Math. {\bf 41} (1989), no. 2, 285–-320.



\bibitem{cogdell}{\sc J.~W.~Cogdell,} {\em Lectures on $L$--functions, converse theorems, and functoriality for
  $GL_n$. Lectures on automorphic $L$--functions,} 1–96, Fields Inst. Monogr., {\bf 20},
  Amer. Math. Soc., Providence, RI, 2004.

  
\bibitem{dixmal} {\sc J.~Dixmier, P.~Malliavin,} {\em 
Factorisations de fonctions et de vecteurs ind\' efiniment diff\' erentiables,}
  Bull. Sci. Math. {\bf (2) 102} (1978), no. 4, 307–-330.
  



\bibitem{hc}{\sc Harish--Chandra,} {\em Discrete series for semisimple
    Lie groups II,} Acta Math. {\bf 116} (1966), 1-111.



\bibitem{langlands} {\sc R.~P.~Langlands,} {\em Dimension of spaces of automorphic forms. 1966 Algebraic Groups and Discontinuous
  Subgroups (Proc. Sympos. Pure Math., Boulder, Colo., 1965),}
  pp. 253–257 Amer. Math. Soc., Providence, R.I.

\bibitem{lapid}{\sc E.~M.~Lapid,} {\em A remark on Eisenstein series. Eisenstein series and applications,} 239--249, Progr. Math., {\bf 258},
  Birkh\" auser Boston, Boston, MA, 2008.


\bibitem{milicic}
{\sc D.~Mili\v ci\' c,} {\em Asymptotic behavior of matrix coefficients of
  the discrete series,} Duke Math. J {\bf 44} (1977), 59--88.


\bibitem{milicic2}
  {\sc D.~Mili\v ci\' c,} {\em Representation theory of $SL_2(\mathbb R)$,} unpublished lecture notes
  (Utah, 1982).

\bibitem{ms1}{\sc  S.~Miller, W.~Schmid,} {\em On the rapid decay of cuspidal automorphic forms,}
  Adv. Math. {\bf 231} (2012), no. 2, 940–-964.

\bibitem{ms2}{\sc  S.~Miller, W.~Schmid,} {\em Pairings of automorphic distributions.} Math. Ann. {\bf 353} (2012), no. 2, 581--597.

\bibitem{ms3}{\sc  S.~Miller, W.~Schmid,} {\em The Archimedean theory of the exterior square L-functions over $\mathbb Q$.}
  J. Amer. Math. Soc. {\bf 25} (2012), no. 2, 465--506. 


\bibitem{MuicMathAnn} {\sc G.~Mui\' c,} {\em
On a Construction of Certain Classes of Cuspidal Automorphic Forms via Poincare Series,} Math. Ann. {\bf 343}, No.1 (2009), 
207--227.

\bibitem{MuicJNT} {\sc G.~Mui\' c,} {\em On the Cuspidal Modular Forms for the Fuchsian Groups of the First Kind,}
  Journal of Number Theory Volume {\bf 130, Issue 7}, July 2010, Pages 1488-1511


\bibitem{MuicComp} {\sc G.~Mui\' c,} {\em Spectral Decomposition of Compactly Supported Poincar\' e
Series and Existence of Cusp Forms,}  Compositio Math. {\bf 146, No. 1} (2010) 1-20.


\bibitem{MuicIJNT} {\sc G.~Mui\' c,} {\em
On the Non--Vanishing of Certain Modular Forms , } International J. of Number Theory {\bf Vol. 7, } Issue 2 (2011) pp. 351-370.

  \bibitem{MuicJFA} {\sc G.~Mui\' c,} {\em Fourier coefficients of automorphic forms and integrable discrete series,}
 J. Funct. Anal. {\bf 270} (2016), no. 10, 3639--3674. 


\bibitem{W0} {\sc N.~R.~Wallach,} {\em On the constant term of a square integrable automorphic form. Operator algebras and
  group representations, Vol. II (Neptun, 1980),} 227--237, Monogr. Stud. Math., {\bf 18}, Pitman, Boston, MA, 1984. 
  
\bibitem{W1} {\sc N.~R.~Wallach,} {\em Real reductive groups I,}  
 Academic Press, Boston, 1988. 

\bibitem{W2} {\sc N.~R.~Wallach,} {\em Real reductive groups II,}
 Academic Press, Boston, 1992.


\bibitem{W3} {\sc N.~R.~Wallach,} {\em A lecture delievered at W. Schmid's birthday conference (2013),}
  http://www.math.harvard.edu/conferences/schmid\_2013/docs/wallach.pdf.

  \bibitem{Tits} {\sc J.~Tits,} {\em
Reductive groups over local fields. Automorphic forms,
representations and $L$-functions,} Proc. Sympos. Pure Math. 
{\bf XXXIII}, Amer. Math. Soc., Providence, R.I (1979), 29--69.

\end{thebibliography}
\end{document}